%% file: main.tex
\documentclass[11pt]{amsart}
\usepackage{amssymb,amsfonts,amsmath,amsthm,cite,verbatim,mathrsfs,amscd}
\usepackage{arydshln}
\usepackage{rotating}
\usepackage{chngcntr}
\usepackage{xcolor}
\usepackage{tikz}
\usepackage{graphicx}
\usepackage[all,pdf]{xy}
\usepackage{setspace}
\usepackage[left=2.5cm, right=2.5cm, top=3.5cm, bottom=3.5cm]{geometry}
\usepackage{enumerate}
\usepackage[linktocpage=true]{hyperref}
\hypersetup{colorlinks,linkcolor=blue,urlcolor=cyan,citecolor=blue}
\usepackage[normalem]{ulem}
\newtheorem{theorem}{Theorem}[section]
\newtheorem{thm}[theorem]{Theorem}
\newtheorem{prop}[theorem]{Proposition}

\newtheorem{innercustomthm}{{\bf Theorem}}
\newenvironment{customthm}[1]
  {\renewcommand\theinnercustomthm{#1}\innercustomthm}
  {\endinnercustomthm}

\newtheorem{lem}[theorem]{Lemma}
\newtheorem{rem}[theorem]{Remark}

\newtheorem{cor}[theorem]{Corollary}

\makeatletter \@addtoreset{equation}{section}

\newcommand{\qbinom}[2]{\genfrac{[}{]}{0pt}{}{#1}{#2}}

\newcommand{\Mid}{\:|\:}  %\Mid has too much space
\DeclareMathOperator*{\CT}{CT}

\begin{document}

\title[AFLT-type $q$-Baker--Forrester ex-conjecture]
{An AFLT-type generalization of the $q$-Baker--Forrester ex-conjecture}

\author{Zihao Huang*, Wenlong Jiang, Yue Zhou}

\thanks{*Corresponding author}

\address{School of Mathematics and Statistics, HNPLAMA, Hunan Research Center of the Basic
Discipline for Analytical Mathematics, Central South University, Changsha 410083,
People’s Republic of China
}

\email{
  \begin{tabular}{@{}ll@{}}
    \texttt{zihaohuang@csu.edu.cn}  \texttt{jiangwenlong@csu.edu.cn} 
    \texttt{zhouyue@csu.edu.cn}
  \end{tabular}}
\date{\today}

\subjclass[2020]{05A30, 33D70, 05E05}

\begin{abstract}
The Habsieger--Kadell $q$-Morris constant term identity, which is equivalent to the famous $q$-Selberg integral, has been generalized in numerous ways since the 1980s.
Among these, there are two important generalizations:
(i) the $q$-Baker--Forrester ex-conjecture, which was conjectured by Baker and Forrester in 1998 and proved by K\'{a}rolyi, Nagy, Petrov and Volkov in 2015;
(ii) the AFLT-type $q$-Morris identity (equivalently, the AFLT-type $q$-Selberg integral), which was obtained by Albion, Rains and Warnaar in 2021, as a $q$-analog of the result of Alba, Fateev, Litvinov and Tarnopolskiy (AFLT). In this paper, by the Gessel--Xin method and the Macdonald polynomials with prescribed symmetry, we unify these two generalizations. 
\noindent

\textbf{Keywords:} AFLT Selberg integral; $q$-Selberg integrals; $q$-Morris identity;
$q$-Baker--Forrester conjecture; Macdonald polynomials; nonsymmetric Macdonald polynomials.
\end{abstract}

\maketitle

\input{1.introduction}

\input{2.Preliminaries}

\input{3.Twofamilies}

\input{4.Somediscussion}

\input{5.proof1}

\input{6.proof2}

\input{8.proof4}

\input{7.proof3}

\subsection*{Acknowledgements}
Artificial intelligence tools were used in the preparation of this paper solely to check grammatical errors and to perform independent checks of the stated theorems; they were not used to formulate or discover any mathematical results or to develop any ideas or proofs.

This work was supported by the Science and Technology Innovation Program and Department of Education Funded Research Projects of Hunan
Province (No. 24A0025) and the National Natural Science Foundation of China (No. 12571359).

\appendix
\input{9.appendix}

%-------------------------------------

\end{document}

%% file: 1.introduction.tex
\section{Introduction}
In 1944, Atle Selberg~\cite{Selberg} gave a remarkable multiple integral
\begin{multline}\label{eq-intro-selberg}
\int_{0}^1\cdots \int_{0}^1\prod_{i=1}^nx_{i}^{\alpha-1}(1-x_i)^{\beta-1}\prod_{1\leq i<j\leq n}
|x_i-x_j|^{2\gamma}\mathrm{d}x_{1}\cdots \mathrm{d}x_{n} \\
=\prod_{i=0}^{n-1}\frac{\Gamma(\alpha+i\gamma)\Gamma(\beta+i\gamma)
\Gamma(1+(i+1)\gamma)}{\Gamma(\alpha+\beta+(n+i-1)\gamma)\Gamma(1+\gamma)},
\end{multline}
where $\alpha, \beta, \gamma$ are complex parameters such that
\[
\mathrm{Re}(\alpha)>0, \quad \mathrm{Re}(\beta)>0,\quad
\mathrm{Re}(\gamma)>-\min\{1/n,\mathrm{Re}(\alpha)/(n-1),\mathrm{Re}(\beta)/(n-1)\}.
\]
This integral is now known as the Selberg integral, and is widely regarded as one of the most fundamental and important hypergeometric integrals. The reader is referred to \cite{FW} for more details. When $n=1$, it reduces to the Euler beta integral.

In 1980, Askey~\cite{Askey} conjectured a $q$-analogue of the Selberg integral. This conjecture was proved independently by Habsieger~\cite{Habsieger} and Kadell~\cite{Kad}. Both of them turned the $q$-Selberg integral into an equivalent constant term identity
\begin{equation}\label{eq-intro-qmorris}
\CT_x \prod_{i = 1}^{n} (x_0/x_i)_a (qx_i/x_0)_b \prod_{1 \leq i < j \leq n} (x_i / x_j)_c (qx_j / x_i)_c = \prod_{i=0}^{n-1} \frac{(q)_{ic+a+b}(q)_{(i+1)c}}{(q)_{ic+a}(q)_{ic+b}(q)_c},
\end{equation} 
where $a,b,c$ are nonnegative integers. Here for $k\in\mathbb{Z}$ and an indeterminate $z$, $(z)_k=(z;q)_k:=\prod_{j=0}^\infty\frac{1-zq^j}{1-zq^{k+j}}$ is the $q$-shifted factorial, $x:= (x_1,\ldots,x_n)$ and $\CT\limits_x f(x)$ denotes the constant term of the Laurent polynomial $f(x)$ with respect to $x$. Note that we can set $x_0 = 1$ in~\eqref{eq-intro-qmorris} without changing the constant term, because of the homogeneity. Since~\eqref{eq-intro-qmorris} was first conjectured and proved for the $q=1$ case by Morris in his Ph.D. thesis~\cite{Morris1982}, we refer to ~\eqref{eq-intro-qmorris} as the Habsieger--Kadell $q$-Morris identity or simply the $q$-Morris identity. The $q$-Morris identity, or equivalently, the $q$-Selberg integral, admits numerous generalizations. In this paper, we focus on two kinds of its generalizations.

The first generalization is the $q$-Baker--Forrester ex-conjecture. For positive integers $n,m$ such that $m\leq n$, and nonnegative integers $a,b,c$, denote
\begin{equation}\label{def-intro-Fnm}
F_{n,m}(x;a,b,c):=\prod_{i=1}^{n}(x_0/x_i)_a
(qx_i/x_0)_b \prod_{1\leq i<j\leq n}
(x_i/x_j)_{c+\chi(j\leq m)}(qx_j/x_i)_{c+\chi(j\leq m)},
\end{equation}
where $\chi(\mathrm{true})=1$ and $\chi(\mathrm{false})=0$.
In 1998, in the study of the multi-component Calogero-Sutherland model, Baker and Forrester\cite[Conjecture 2.1]{BF} conjectured that
\begin{multline}\label{eq-intro-conj-qBF1}
\CT_x F_{n,m}(x;a,b,c)\\
= \prod_{i=1}^{m-1} (1-q^{(i+1)(c+1)})\prod_{i=0}^{n-1} \cfrac{(q)_{ic+a+b+\chi(i> n-m)(i-n+m)}(q)_{(i+1)c+\chi(i>n-m)(i-n+m)}}{(q)_{ic+a+\chi(i>n-m)(i-n+m)}(q)_{ic+b+\chi(i> n-m)(i-n+m)}(q)_{c+\chi(i>n-m)}}.
\end{multline}
This conjecture was proved by K\'{a}rolyi, Nagy, Petrov and Volkov \cite{KNPV} using the Combinatorial Nullstellensatz in 2015. Note that ~\eqref{eq-intro-conj-qBF1} differs slightly from the original identity in~\cite[(2.12)]{BF}, where the roles of the first $m$ variables and the last $n-m$ variables are interchanged. These two identities are equivalent. One can obtain ~\cite[(2.12)]{BF} from~\eqref{eq-intro-conj-qBF1} by substituting
\[
x_i\mapsto x_{n-i+1}^{-1} \text{ for $i=1,\ldots,n$},\quad \text{and } \quad  x_0\mapsto q x_0^{-1},
\]
in $F_{n,m}(x;a,b,c)$, and using the fact that such a substitution does not change the constant term. The constant term identity~\eqref{eq-intro-conj-qBF1} reduces to \eqref{eq-intro-qmorris} by taking $m=n$ and $c\mapsto c-1$, or $m=1$. 

The second generalization is the AFLT-type $q$-Morris constant term identity (or equivalently, the AFLT-type $q$-Selberg integral). In 2011, based on the AGT conjecture\cite{agt}, Alba, Fateev, Litvinov and Tarnopolskiy (AFLT)\cite{AFLT} obtained a generalization of the Selberg integral over a pair of Jack polynomials. In 2021, Albion, Rains and Warnaar \cite{ARW} obtained an elliptic generalization of the AFLT integral. As a corollary, they obtained a $q$-analogue of the AFLT integral, which is a $q$-Selberg integral over a pair of Macdonald polynomials\cite[Corollary 6.1]{ARW}. By the standard transformation between $q$-Selberg type integrals and $q$-Morris type constant term identities~\cite{FW,Zhou-AFLT}, their $q$-AFLT integral is equivalent to a generalization of the $q$-Morris identity; see~\eqref{eq-intro-AFLT} below. For a positive integer $n$, nonnegative integers $a,b,c$ and partitions $\lambda$ and $\mu$, denote
\begin{multline*}\label{def-intro-An(a,b,c,lambda,mu)}
A_n(a,b,c,\lambda,\mu):=\CT\limits_x x_0^{|\lambda|-|\mu|}P_{\lambda}(x^{-1};q,q^c)
P_{\mu}\Big(\Big[\frac{q^{c-b-1}-q^a}{1-q^c}x_0+\sum_{i=1}^nx_i\Big];q,q^c\Big)\\
\times
\prod_{i=1}^{n}(x_0/x_i)_a(qx_i/x_0)_b
\prod_{1\leq i<j\leq n}(x_i/x_j)_c(qx_j/x_i)_c,
\end{multline*}
where $P_\lambda(x;q,t)$ is the Macdonald polynomial, $|\lambda|:=\sum_{i\geq 1}\lambda_i$ is the size of $\lambda$, and $f[y+z]$ is the plethystic notation (see Section~\ref{subsec-2.5}). Then for $\ell(\lambda)\leq n$, 
\begin{equation}\label{eq-intro-AFLT}
\begin{split}
A_n(a,b,c,\lambda,\mu)&=(-1)^{|\lambda|}q^{\sum_{i=1}^n \binom{\lambda_i+1}{2} -cn(\lambda)} P_\lambda\Big(\Big[ \cfrac{1-q^{nc}}{1-q^c}\Big];q,q^c \Big) P_\mu\Big(\Big[ \cfrac{q^{c-b-1}-q^{a+nc}}{1-q^c}\Big];q,q^c \Big)\\
&\times \prod_{i=1}^n \prod_{j=1}^{\ell(\mu)} (q^{b+(i-j-1)c-\lambda_i+\mu_{j+1}+1})_{\mu_j-\mu_{j+1}} \prod_{i=0}^{n-1} \cfrac{(q)_{ic+a+b}(q)_{(i+1)c}}{(q)_{ic+a+\lambda_{n-i}}(q)_{ic+b-\lambda_{i+1}+\mu_1} (q)_c},
\end{split}
\end{equation}
where $n(\lambda):=\sum_{i\geq 1}(i-1)\lambda_i$. Note that both sides of ~\eqref{eq-intro-AFLT} trivially vanish when $\ell(\lambda)> n$. We refer to~\eqref{eq-intro-AFLT} as the AFLT-type $q$-Morris constant term identity. It reduces to the $q$-Morris identity~\eqref{eq-intro-qmorris} when $\lambda=\mu=\emptyset$. Many special cases of \eqref{eq-intro-AFLT} enjoy a rich history, and we refer the reader to \cite{Zhou-AFLT} for more details.

The main objective of this paper is to unify the $q$-Baker--Forrester identity~\eqref{eq-intro-conj-qBF1} and the AFLT-type $q$-Morris identity~\eqref{eq-intro-AFLT}. Throughout this paper, let $n,m$ be positive integers such that $m\leq n$, and let $\delta^m:=(m-1,\ldots,1,0)$ be the staircase partition. Let
\[
X_m:=x_1+\cdots+x_m,\  X_{n\setminus m}:=x_{m+1}+\cdots+x_n
\]
and
\[
X_m^{-1}:=x_1^{-1}+\cdots+x_m^{-1}, \ X_{n\setminus m}^{-1}:=x_{m+1}^{-1}+\cdots+x_n^{-1}
\]
be alphabets in plethystic notation. Let $a,b,c$ be nonnegative integers, and let $\lambda,\xi,\mu$ be partitions. Denote
\begin{align}\label{definition-B_nm(a,b,c,lambda,xi,mu)}
    F_{n,m}(a,b,c,\lambda,\xi,\mu):=& \CT\limits_x x_0^{|\lambda|+|\xi|-|\mu|} F_{n,m}(x;a,b,c) P_\xi\left( \left[X_{n\setminus m}^{-1}\right];q^{c+1},q^c\right)\\
    & \times P_\lambda\left(\left[X_m^{-1}+\cfrac{1-q^c}{1-q^{c+1}}X_{n\setminus m}^{-1}\right];q,q^{c+1} \right)\nonumber \\
    & \times P_\mu\left(\left[\cfrac{q^{c-b}-q^a}{1-q^{c+1}}x_0+X_m +\cfrac{q-q^{c+1}}{1-q^{c+1}}X_{n\setminus m}\right];q,q^{c+1}\right)\nonumber.
\end{align}
For compactness, we sometimes write $P_\lambda^{(q,t)}[X]$ for $P_\lambda([X];q,t)$. The main result of this paper is the next theorem.
\begin{thm}\label{main-thm}
    For nonnegative integers $a,b,c$, and partitions $\lambda$, $\xi$, $\mu$ such that $\ell(\lambda)<m$, $\ell(\mu)< m$, $\ell(\xi)\leq n-m$, let $F_{n,m}(a,b,c,\lambda,\xi,\mu)$ be defined as in~\eqref{definition-B_nm(a,b,c,lambda,xi,mu)}. If $\lambda$ and $\xi$ satisfy at least one of the three conditions:
    \[
    \text{(i) } \xi=\emptyset;\quad \text{(ii) }\lambda=\emptyset \text{ and } \xi_1\leq m; \quad \text{(iii) }|\lambda|+|\xi|\leq \min\{n-m,m\},
    \]
    then
    \begin{align}\label{eq-intro-maineq}
    &F_{n,m}(a,b,c,\lambda,\xi,\mu)\\
    &=(-1)^{|\mu|}q^{(b+1)(|\xi|+|\lambda|)+\sum_{i=1}^{\ell(\mu)}\binom{\mu_i}{2}-(c+1)n(\mu)}\nonumber\\
    &\times P_\xi^{(q^{c+1},q^c)}\Big[\cfrac{1-q^{(n-m)c}}{1-q^c}\Big] P_\lambda^{(q,q^{c+1})}\Big[ \cfrac{1-q^{nc+m}}{1-q^{c+1}}\Big] P_\mu^{(q,q^{c+1})}\Big[ \cfrac{1-q^{b+(n-1)c+m+a}}{1-q^{c+1}}\Big]\nonumber\\
    &\times \prod_{i=1}^{m-1}\prod_{j=1}^{m-1}(q^{(i-j)(c+1)-b-\mu_i+\lambda_{j+1}})_{\lambda_j-\lambda_{j+1}}
    \prod_{i=1}^{m-1} \cfrac{ (1-q^{(i+1)(c+1)}) (q)_{ic-b-1+\eta^+_{m-i}}}{(q)_{i(c+1)-b-1-\mu_i+\lambda_1}}\prod_{i=0}^{n-m} (q^{-ic-b})_{\eta_{m+i}^+}\nonumber\\
    &\times \prod_{i=0}^{n-1}\cfrac{(q)_{ic+a+b+\chi(i\geq n-m)(i-n+m)}(q)_{(i+1)c+\chi(i\geq n-m)(i-n+m)}}{(q)_{ic+a+\eta_{n-i}^+}(q)_{ic+b+\chi(i\geq n-m)(i-n+m)+\mu_{n-i}}(q)_{c+\chi(i>n-m)}}.\nonumber
   \end{align} 
   Here $\eta:=(\lambda+\delta^m,\xi)$, and $\eta^+$ is the unique partition obtained by rearranging the entries of $\eta$ in weakly decreasing order.
\end{thm}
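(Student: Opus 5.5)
The proof will follow the Gessel--Xin strategy combined with Macdonald polynomials with prescribed symmetry, in three stages.

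\textbf{Step 1: absorb $\lambda$ and $\xi$.} The factor $\prod_{1\le i<j\le m}(x_i/x_j)_{c+1}(qx_j/x_i)_{c+1}\prod (x_i/x_j)_c(qx_j/x_i)_c$ is the Macdonald-type weight in which the first $m$ variables have parameter $t=q^{c+1}$ and the last $n-m$ have $t=q^c$. The plethystic alphabet $X_m^{-1}+\frac{1-q^c}{1-q^{c+1}}X_{n\setminus m}^{-1}$ and the $(q^{c+1},q^c)$-polynomial in $X_{n\setminus m}^{-1}$ are chosen so that the product
\[
P_\xi\bigl[X_{n\setminus m}^{-1};q^{c+1},q^c\bigr]\,P_\lambda\Bigl[X_m^{-1}+\tfrac{1-q^c}{1-q^{c+1}}X_{n\setminus m}^{-1};q,q^{c+1}\Bigr]
\]
can be expanded in nonsymmetric Macdonald polynomials $E_\nu(x^{-1})$, symmetrized with prescribed symmetry. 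The intended leading term is indexed by the composition $\eta=(\lambda+\delta^m,\xi)$, which is why $\eta^+$ appears in the answer.

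Under each of the three hypotheses (i)--(iii), I will show that only one such prescribed-symmetry polynomial contributes to the constant term, or that the remaining terms vanish.
\begin{itemize}
\item In case (i) this is immediate from a branching rule.
\item In case (ii) the condition $\xi_1\le m$ guarantees that $\eta=(\delta^m,\xi)$ can be sorted compatibly.
\item In case (iii) a degree argument shows that the extra terms have too small a degree to survive.
\end{itemize}
After this step, the constant term becomes a single $q$-Morris-type constant term twisted by one polynomial with prescribed symmetry.

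\textbf{Step 2: handle $\mu$.} With $\lambda,\xi$ fixed, I will regard $F_{n,m}$ as a function of $q^a$. By the Gessel--Xin method it is a polynomial in $q^a$ of bounded degree, roughly $|\mu|+nb+\cdots$. I will then locate enough zeros. Taking $q^a=q^{-k}$ for suitable $k$ in a range determined by $b$, $c$ and $\eta^+$, each term of the iterated Laurent expansion (in the field of iterated Laurent series) vanishes. This is the standard argument through the leading-term lemma on the variables $x_1,\dots,x_n$ ordered suitably.

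The plethystic specialization $\frac{q^{c-b}-q^a}{1-q^{c+1}}x_0$ in $P_\mu$ is designed so that, for $a\mapsto -k$, the factor $P_\mu$ combined with $(x_0/x_i)_a$ behaves like an evaluation symmetric function. This should produce the factors $(q^{(i-j)(c+1)-b-\mu_i+\lambda_{j+1}})_{\lambda_j-\lambda_{j+1}}$ and the $\mu$-dependent denominators.

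\textbf{Step 3: fix the constant.} The remaining normalization I will determine by one more evaluation, for example $q^a$ at a special point or $a$ large via a recursion in $a$. Here the answer should reduce to $\lambda=\xi=\mu=\emptyset$, which is the Baker--Forrester identity~\eqref{eq-intro-conj-qBF1} of K\'arolyi--Nagy--Petrov--Volkov. It is then adjusted by the principal specializations $P_\xi^{(q^{c+1},q^c)}\bigl[\frac{1-q^{(n-m)c}}{1-q^c}\bigr]$ and the corresponding evaluations of $P_\lambda$ and $P_\mu$, using the Macdonald evaluation formula.

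\textbf{Main obstacle.} I expect the hardest part to be Step 1, and specifically case (iii). Showing that the mixed $(q,q^{c+1})$/$(q^{c+1},q^c)$ product collapses to a single prescribed-symmetry polynomial with the correct coefficient requires controlling the nonsymmetric expansion. I would first try to prove vanishing of all non-leading contributions via the orthogonality of $E_\nu$ with respect to the Cherednik inner product, whose weight is exactly $F_{n,m}(x;0,0,c)$. The size bound $|\lambda|+|\xi|\le\min\{n-m,m\}$ would enter through a dominance-order argument.
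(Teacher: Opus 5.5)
Your Step~1 aims at the wrong statement, and the justifications you sketch would not prove it. The input the argument needs is an \emph{exact} polynomial identity, Theorem~\ref{thm-nonmac-factorization}: under (i)--(iii),
$\mathcal{A}_t^{(X_m)}\mathcal{S}_t^{(X_{n\setminus m})}E_\eta(x;q,t)=c_\eta\,\Delta_t(X_m)\,P_\lambda\bigl[X_m+\tfrac{q(1-t)}{1-qt}X_{n\setminus m};q,qt\bigr]\,P_\xi\bigl[X_{n\setminus m};qt,t\bigr]$.
It is not enough to hope that non-leading terms ``vanish in the constant term''. A component $\mathcal{A}\mathcal{S}E_\nu$ with $\nu^+\neq\eta^+$ contributes constant terms $M_n(a,b,c,\nu,\alpha)$ whose forced roots are $B_1(\nu^+)$, not $B_1(\eta^+)$, and no mechanism makes these contributions vanish.

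You also drop the $t$-Vandermonde factor. $F_{n,m}(x;0,0,c)$ is not the Cherednik weight for $t=q^c$: one must split off $\prod_{1\le i<j\le m}(1-q^cx_i/x_j)(1-q^{c+1}x_j/x_i)$. The factor $\prod_{i<j\le m}(q^{-c}x_i^{-1}-x_j^{-1})$ is then attached to $P_\lambda P_\xi$ (in $x^{-1}$), and $\prod_{i<j\le m}(x_i-q^{c+1}x_j)$ to $P_\mu$.

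Your case-by-case justifications do not work:
(iii) A degree argument cannot separate anything, because every $E_\nu$ in the expansion of the homogeneous polynomial $\Delta_t(X_m)P_\lambda P_\xi$ has the same degree $|\lambda|+|\xi|+\binom m2$. Cases (ii) and (iii) are nontrivial results of Baker et al.\ and Blondeau-Fournier et al.
(i) This is not an immediate branching rule; the paper regards it as new. It is proved by showing that $\Delta_t(X_m)P_\lambda[\cdots]$ is orthogonal to every basis element $\mathcal{A}_{q^c}^{(X_m)}E_{(\kappa+\delta^m,\rho)}$ of $\mathrm{Asym}_{q^c}[X_m]$ except $\kappa=\lambda$, $\rho=0^{n-m}$. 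That orthogonality is itself extracted from the polynomiality/vanishing machinery. A nonzero pairing would force $T_{n,m}(a,0,c,\nu,\lambda)/T_{n,m}(0,0,c,\nu,\lambda)=1$ to equal $R_{n,m}(a,0,c,\nu,\lambda)/R_{n,m}(0,0,c,\nu,\lambda)$, which is identically $1$ only when $\nu^+=(\lambda+\delta^m,0^{n-m})$.

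Steps~2 and~3 also do not close. The normalization cannot be obtained from $\lambda=\xi=\mu=\emptyset$: that case says nothing about the $\lambda,\xi,\mu$-dependent constant, and ``adjusting by principal specializations'' simply asserts the answer. The factors $\prod_{i,j}(q^{(i-j)(c+1)-b-\mu_i+\lambda_{j+1}})_{\lambda_j-\lambda_{j+1}}$ are independent of $a$, so they cannot come out of a root analysis in $q^a$ as you suggest.

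The paper instead evaluates at the single extra point $-a=(n-m)c+b+1$. There the Gessel--Xin expansion collapses onto $u=(u_1,m+1,\dots,n)$ with $1\le u_1\le m$ and $k=((n-m)c+b+1,\dots,c+b+1,b+1)$. The result is an explicit constant, containing $P_\xi^{(q^{c+1},q^c)}[\frac{1-q^{(n-m)c}}{1-q^c}]$, times $A_{m-1}((n-m+1)c+b+1,c-b,c+1,\mu,\lambda)$. This is an AFLT-type $q$-Morris constant term in $m-1$ variables with $\lambda$ and $\mu$ interchanged, evaluated by the Albion--Rains--Warnaar identity~\eqref{eq-intro-AFLT}. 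Some input of this strength is indispensable.

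For the interpolation you need the precise root set $B_1(\eta^+)\cup B_2(\delta^m)\cup B_3(\mu)$, whose size equals the degree bound $nb+|\mu|-|\lambda|-|\xi|$. It is not true that ``each term of the iterated Laurent expansion vanishes'':
for $B_1$, the surviving Gessel--Xin terms contain no zero factor. Their constant terms vanish only after splitting $E_\nu$ across $U$ and $U^c$ (Lemma~\ref{cor-nonmac-splitvariables}) and using $\langle x^v,E_\gamma\rangle=0$ for $\gamma\not\preceq v$.
For $B_2$ and $B_3$, you need the integer-distribution Lemma~\ref{lem-preliminary-key} with $r=|\{i:u_i\le m\}|$, together with the vanishing of $P_\mu$ at alphabets $\frac{1-q}{t-1}(\alpha_1+\cdots)+\beta_1+\cdots$.

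Finally, these roots are only available when $c\ge b+\mu_1$ and $b\ge|\lambda|+|\xi|+m$. To extend to all $a,b,c$ you also need rationality in $q^b$, and rationality of $(q)_c^n/(q)_{nc}\cdot F_{n,m}$ in $q^c$; polynomiality in $q^a$ alone does not suffice.
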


Note that (i) the first attempt in this direction was made by Xin and Zhou~\cite{Zhou-tran}. One of their main results gives a one-row complete symmetric function generalization of $F_{n,m}(a,b,c,\emptyset,\emptyset,\mu)$~\cite[Theorem 1.1]{Zhou-tran}. However, it appears difficult to extend their method to the case of Macdonald polynomials with more than one row; (ii) the Macdonald polynomials $P_\lambda$ and $P_\xi$ occurring in $F_{n,m}(a,b,c,\lambda,\xi,\mu)$ also appear in the theory of Calogero--Sutherland--Moser models~\cite{BDF}, and in the theory of Macdonald polynomials in superspace~\cite{double-mac}; see Theorem~\ref{thm-nonmac-factorization} below. 

The first case of Theorem~\ref{main-thm} admits a simpler form, which we state separately as a corollary.
\begin{cor}\label{cor-intro-2}
    With the same notation and assumptions as in Theorem~\ref{main-thm}, we have
    \begin{align*}
    &F_{n,m}(a,b,c,\lambda,\emptyset,\mu)\\
    &=(-1)^{|\mu|}q^{(b+1)|\lambda|+\sum_{i=1}^{\ell(\mu)} \binom{\mu_i}{2}-(c+1)n(\mu)} P_\lambda^{(q,q^{c+1})}\Big[ \cfrac{1-q^{nc+m}}{1-q^{c+1}}\Big] P_\mu^{(q,q^{c+1})}\Big[ \cfrac{1-q^{b+(n-1)c+m+a}}{1-q^{c+1}}\Big] \\
    &\times \prod_{i=1}^{m-1}\prod_{j=1}^{m-1}(q^{(i-j)(c+1)-b-\mu_i+\lambda_{j+1}})_{\lambda_j-\lambda_{j+1}}
    \prod_{i=1}^{m-1} \cfrac{ (1-q^{(i+1)(c+1)}) (q)_{i(c+1)-b-1+\lambda_{m-i}}}{(q)_{i(c+1)-b-1-\mu_i+\lambda_1} }\\
    &\times \prod_{i=0}^{n-1}\cfrac{(q)_{ic+a+b+\chi(i\geq n-m)(i-n+m)}(q)_{(i+1)c+\chi(i\geq n-m)(i-n+m)}}{(q)_{ic+a+\chi(i\geq n-m)(i-n+m)+\lambda_{n-i}}(q)_{ic+b+\chi(i\geq n-m)(i-n+m)+\mu_{n-i}}(q)_{c+\chi(i>n-m)}}.\\
    \end{align*}
\end{cor}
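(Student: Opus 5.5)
The corollary is the specialization $\xi=\emptyset$ of Theorem~\ref{main-thm}, case (i). So the plan is to substitute $\xi=\emptyset$ into \eqref{eq-intro-maineq} and simplify the terms involving $\eta^+$; no new constant term analysis is needed.

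First, with $\xi=\emptyset$ we have $P_\xi=1$ and $|\xi|=0$. The prefactor therefore becomes $(-1)^{|\mu|}q^{(b+1)|\lambda|+\sum_i\binom{\mu_i}{2}-(c+1)n(\mu)}$, and the product of the remaining two Macdonald polynomial evaluations is unchanged.

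Next, $\eta=(\lambda+\delta^m,0^{n-m})$ is the vector with entries $\eta_i=\lambda_i+m-i$ for $i\le m$ and $\eta_i=0$ for $i>m$. Since $\lambda$ is a partition, $\lambda+\delta^m$ is strictly decreasing and nonnegative. Hence $\eta^+=\eta$ as a partition of length at most $n$, and $\eta^+_{m+i}=0$ for $i\ge 1$. We then carry out three simplifications.
\begin{enumerate}
\item The product $\prod_{i=0}^{n-m}(q^{-ic-b})_{\eta^+_{m+i}}$ has only the $i=0$ factor possibly nontrivial, namely $(q^{-b})_{\eta^+_m}$. Here $\eta^+_m=\lambda_m$, and $\lambda_m=0$ because $\ell(\lambda)<m$. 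So this product equals $1$.
\item In the factor $(q)_{ic-b-1+\eta^+_{m-i}}$ for $1\le i\le m-1$, we have $\eta^+_{m-i}=\lambda_{m-i}+i$. This gives $(q)_{i(c+1)-b-1+\lambda_{m-i}}$, as stated.
\item In the denominator factor $(q)_{ic+a+\eta^+_{n-i}}$, set $k=n-i$. If $k\le m$, that is $i\ge n-m$, then $\eta^+_k=\lambda_{n-i}+m-n+i=\lambda_{n-i}+(i-n+m)$. If $i<n-m$, then $\eta^+_{n-i}=0=\lambda_{n-i}$, because $n-i>m>\ell(\lambda)$. In both cases $\eta^+_{n-i}=\chi(i\ge n-m)(i-n+m)+\lambda_{n-i}$, which is exactly the form in the corollary.
\end{enumerate}

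The remaining factors are copied verbatim from \eqref{eq-intro-maineq}.

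No step is a real obstacle; the only point requiring care is the index bookkeeping in the third simplification. In particular, one must check the boundary case $i=n-m$, where $\eta^+_m=\lambda_m=0$ and the two expressions agree.
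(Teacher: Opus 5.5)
Your proposal is correct and follows the paper's route: the paper presents this corollary simply as case (i) of Theorem~\ref{main-thm}, specialized to $\xi=\emptyset$. You correctly note that in this case $P_\xi=1$, $\eta^+=(\lambda+\delta^m,0^{n-m})$, the product $\prod_{i=0}^{n-m}(q^{-ic-b})_{\eta^+_{m+i}}$ collapses to $1$ because $\lambda_m=0$, $\eta^+_{m-i}=\lambda_{m-i}+i$, and $\eta^+_{n-i}=\chi(i\geq n-m)(i-n+m)+\lambda_{n-i}$, including at the boundary $i=n-m$.
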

Corollary~\ref{cor-intro-2} reduces to the  $\ell(\mu)<n$ case of the AFLT-type $q$-Morris identity~\eqref{eq-intro-AFLT} for $m=n$, and to the $q$-Baker--Forrester identity~\eqref{eq-intro-conj-qBF1} for $\lambda=\mu=\emptyset$. 

The proof of Theorem~\ref{main-thm} relies on the well-known fact that a polynomial is uniquely determined by its roots and its value at an additional point. Under the conditions of Theorem~\ref{main-thm}, we establish the theorem in the next three steps: 
\begin{enumerate}
    \item \textbf{Polynomiality and rationality:} We show that 
    \begin{enumerate}
        \item for fixed nonnegative integers $b$ and $c$, the constant term $F_{n,m}(a,b,c,\lambda,\xi,\mu)$ is a polynomial in $q^a$ of degree at most $nb+|\mu|-|\lambda|-|\xi|$;
        \item for fixed nonnegative integers $a$ and $c$, the constant term $F_{n,m}(a,b,c,\lambda,\xi,\mu)$ is a rational function in $q^b$;
        \item for fixed nonnegative integers $a$ and $b$, the expression
        \[
        (q)_c^n/(q)_{nc} \cdot F_{n,m}(a,b,c,\lambda,\xi,\mu)
        \]
        is a rational function in $q^c$.
    \end{enumerate}
    %Note that $F_{n,m}(a,b,c,\lambda,\xi,\mu)$ cannot be simultaneously viewed as a polynomial in both $q^a$ and $q^b$. This is because viewing it as a polynomial in $q^a$ requires that $b$ be a fixed nonnegative integer, and vice versa.
    \item \textbf{Roots:}
    Let $b$ and $c$ be positive integers such that $c\geq b+\mu_1$ and $b \geq |\lambda|+|\xi|+m$. For $F_{n,m}(a,b,c,\lambda,\xi,\mu)$, as a polynomial in $q^a$, we will determine all its roots. Explicitly, we show that
    \[
    F_{n,m}(a,b,c,\lambda,\xi,\mu)=0 \quad \text{ for }-a\in B_1(\eta^+)\cup B_2(\delta^m) \cup B_3(\mu).
    \]
    Here $\eta:=(\lambda+\delta^m,\xi)$, and the sets $B_1$, $B_2$, $B_3$ are defined by
    \begin{subequations}\label{def-sets-B1-B3}
    \begin{equation}\label{def-sets-B1}
    B_1(\eta^+):=\{ic+\eta^+_{n-i}+1,\ldots,ic+b : i=0,\ldots,n-1\},
    \end{equation}
    \begin{equation}\label{def-sets-B2}
    B_2(\delta^m):=\{ic+b+1,\ldots,ic+b+\delta^m_{n-i} : i=n-m+1,\ldots,n-1\},
    \end{equation}
    \begin{equation}\label{def-sets-B3}
    B_3(\mu):=\{ic+b+\delta^m_{n-i}+1,\ldots,ic+b+\delta^m_{n-i}+\mu_{n-i} : i=n-\ell(\mu),\ldots,n-1\}.
    \end{equation}
    \end{subequations}
    Note that under the given conditions on $b$ and $c$, the set $B_1\cup B_2\cup B_3$ is a disjoint union with the cardinality $|B_1(\eta^+)|+|B_2(\delta^m)|+|B_3(\mu)|=nb+|\mu|-|\lambda|-|\xi|$. 
    \item \textbf{Additional point:} Under the same assumptions on $b$ and $c$ as in Step (2), we obtain an explicit expression for $F_{n,m}(a,b,c,\lambda,\xi,\mu)$ at $-a=(n-m)c+b+1$.
\end{enumerate}

We can uniquely determine the closed-form expression for $F_{n,m}(a,b,c,\lambda,\xi,\mu)$ for all nonnegative integers $a,b,c$ by the above steps (see Lemma~\ref{lem-preliminary-a,b,c} below). The first step is routine but not trivial. The last step follows from a direct calculation. The proof of the second step, however, is quite lengthy.  

The remainder of this paper is organized as follows. 
In the next section, we give some preliminaries.  
In Section~\ref{section-two families}, we introduce two families of constant terms, and present their polynomiality, rationality, and vanishing properties. We show in this section that these properties, along with Theorem~\ref{thm-nonmac-factorization}, shall establish Step (1) and Step (2) in the above. 
In Section~\ref{sec-discussion}, we make some preparations for the subsequent proofs.
In Sections~\ref{sec-proof1} and \ref{sec-proof2}, we prove the properties of the two families of constant terms mentioned above.
In Section~\ref{sec-proof4}, we prove case (1) of Theorem~\ref{thm-nonmac-factorization}. At this stage, we finish Step (1) and Step (2). In the last section, we establish Step (3) and complete the proof of Theorem~\ref{main-thm}.

%% file: 2.Preliminaries.tex
\section{Preliminaries}\label{sec-preliminaries}
\subsection{$q$-shifted factorials}
For an indeterminate $z$, define the infinite $q$-shifted factorial as
\[
(z)_{\infty} = (z; q)_{\infty} := \prod_{i=0}^{\infty} (1 - zq^{i}),
\]
where, typically in this paper, we suppress the base $q$. For an integer $k$, define
\[
(z)_{k} = (z; q)_{k} := \frac{(z; q)_{\infty}}{(zq^{k}; q)_{\infty}}.
\]
Then
\[
(z)_{k} =
\begin{cases}
(1 - z)(1 - zq) \cdots (1 - zq^{k-1}) & \text{if } k > 0, \\
1 & \text{if } k = 0, \\
\dfrac{1}{(1 - zq^{k})(1 - zq^{k+1}) \cdots (1 - zq^{-1})} = \dfrac{1}{(zq^{k})_{-k}} & \text{if } k < 0.
\end{cases}
\]
Note that for integers $r$ and $k$ such that $r\leq 0<r+k$, we have $(q^r)_k=0$. For integers $n$ and $k$, define the $q$-binomial coefficient as
\[
\qbinom{n}{k}=\cfrac{(q^{n-k+1})_k}{(q)_k}.
\]
It is well known that ~\cite[Exercise 1.2]{GM}
\begin{equation}\label{e-qbinomialn}
(z)_n=\sum_{k=0}^\infty q^{k(k-1)/2}\qbinom{n}{k} (-z)^k
\end{equation}
for all integers $n$.

By convention, when $k < j$, we set $\sum_{i=j}^{k} n_i:= 0$ and $\prod_{i=j}^{k} n_i := 1$.

\subsection{Partitions and compositions}
Throughout this paper, let $\mathbb{Z}$ be the set of integers and $\mathbb{N}$ be the set of nonnegative integers. 

We say $\alpha$ is a composition if $\alpha\in\mathbb{N}^k$ for some positive integer $k$. For a composition $\alpha$, let the size $|\alpha|:=\alpha_1+\alpha_2+\cdots$ be the sum of its entries. Let $\alpha^+=(\alpha^+_1,\alpha^+_2,\ldots)$ be the unique integer sequence obtained from $\alpha$ by rearranging the $\alpha_i$ in a weakly decreasing order. 

A partition $\lambda=(\lambda_1,\lambda_2,\dots)$ is a sequence of nonnegative integers such that
$\lambda_1\geq \lambda_2\geq \cdots$ and only finitely many of the $\lambda_i$ are positive. The length of a partition $\lambda$, denoted by $\ell(\lambda)$, is the number of non-zero $\lambda_i$.  With the possible exception of finitely many zeros, we usually drop the infinite string of zeros of a partition so that $\lambda=(\lambda_1,\lambda_2,\ldots,\lambda_n)$ denotes a partition of length at most $n$. The set of partitions of length at most $l$ is denoted by $\mathcal{P}_l$. For two partitions $\lambda=(\lambda_1,\ldots,\lambda_{n_1})$ and $\xi=(\xi_1,\ldots,\xi_{n_2})$, we write $\lambda\subseteq\xi$ if $\lambda_i\leq \xi_i$ for all $i\geq 1$, write $(\lambda,\xi)$ for the composition $(\lambda_1,\ldots,\lambda_{n_1},\xi_1,\ldots,\xi_{n_2})$ and $\lambda+\xi$ for the partition $(\lambda_1+\xi_1,\lambda_2+\xi_2,\ldots)$. Here the indices $n_1,n_2$ are allowed to be greater than the lengths of $\lambda$ and $\xi$, respectively. Note that, in our definition, $\lambda=(3,2)$ and $\lambda'=(3,2,0)$ are regarded as the same partition; however, $(3,2,\xi_1,\ldots,\xi_{n_2})$ and $(3,2,0,\xi_1,\ldots,\xi_{n_2})$ are different compositions. 

The dominance order on compositions (and hence for partitions) is defined as
\[
\alpha\leq \beta \iff |\alpha|=|\beta|\text{ and }\alpha_1+\cdots+\alpha_k\leq \beta_1+\cdots+\beta_k \text{  for all }k\geq 1.
\]
The Bruhat order on compositions is defined by
\[
\alpha \preceq \beta \iff \alpha^{+} < \beta^{+} \text { or in the case } \alpha^{+}=\beta^{+}\text{, } \alpha\leq \beta.
\]
As usual, we write $\alpha<\beta$ (resp. $\alpha\prec\beta$) if $\alpha\leq \beta$ (resp. $\alpha\preceq\beta$) but $\alpha\neq \beta$. Write $\alpha\not<\beta$ (resp. $\alpha\not\prec\beta$) if $\alpha<\beta$ (resp. $\alpha\prec\beta$) fails.

\subsection{Symmetric group}
Let $\mathfrak{S}_n$ be the symmetric group of $\{1,\ldots,n\}$ and $s_i=(i,i+1)$ be the adjacent transposition. It is known that $\mathfrak{S}_n$ is generated by $\{s_i:i=1,\ldots,n-1\}$. For $\sigma\in\mathfrak{S}_n$, the length $\ell(\sigma)$ of $\sigma$ is defined as the length of a reduced (i.e. shortest) factorization $\sigma=s_{i_1}\cdots s_{i_\ell}$. It is known that $\ell(\sigma)$ is equal to the number of inversions of the permutation $\sigma$. That is,
\[
\ell(\sigma)=|\{(i<j)|\sigma(i)>\sigma(j)\}|.
\]
The action of $\mathfrak{S}_n$ on $\mathbb{Z}^n$ is defined by
\[
\sigma\alpha:=(\alpha_{\sigma(1)},\alpha_{\sigma(2)},\ldots,\alpha_{\sigma(n)}),
\]
where $\sigma\in\mathfrak{S}_n$, $\alpha=(\alpha_1,\alpha_2,\ldots,\alpha_n)\in\mathbb{Z}^n$. The action of $\mathfrak{S}_n$ on a monomial $x^\alpha:=x_1^{\alpha_1}\cdots x_n^{\alpha_n}$ is defined by
\[
\sigma x^\alpha:=x^{\sigma\alpha}=x_1^{\alpha_{\sigma(1)}}\cdots x_n^{\alpha_{\sigma(n)}}.
\]

\subsection{Symmetric functions}
Throughout this paper, we work with the coefficient field $\mathbb{F}=\mathbb{Q}(q,t)$, where typically in this paper, we assume $t=q^c$ for a nonnegative integer $c$. Let $X=\{x_1,x_2,\dots\}$ be an alphabet of countably many variables and $X_n=\{x_1,\ldots,x_n\}$ be a finite alphabet. Denote the ring of symmetric functions in $X$ (resp. $X_n$) over the field $\mathbb{F}$ by $\Lambda_{\mathbb{F}}$ (resp. $\Lambda_{\mathbb{F},n}$). Typical bases for $\Lambda_{\mathbb{F}}$ include the monomial, elementary, complete, and power sum symmetric functions. Here we introduce only the monomial and the power sum symmetric functions. For more details, see~\cite{Mac95}.

For a partition $\lambda$, the monomial symmetric function $m_\lambda$ in the alphabet $X$ is defined as
\[
m_{\lambda}=m_{\lambda}(X):=\sum_{\alpha^+=\lambda} X^\alpha,
\]
where $\alpha$ runs over all distinct rearrangements of $\lambda$ and  $X^\alpha:=x_1^{\alpha_1} x_2^{\alpha_2}\cdots$. Define $m_\lambda(X_n):=m_\lambda(X)\big|_{x_i=0\text{ for }i>n}$. It follows that $m_\lambda(X_n)=0$ if $\ell(\lambda)>n$. 

For a positive integer $r$, define
\[
p_r=p_r(X):=\sum_{i\geq 1}x_i^r,
\]
and $p_0=1$. More generally, for a partition $\lambda$, the power sum symmetric function $p_\lambda$ in $X$ is defined by
\[
p_{\lambda}=p_{\lambda_1}p_{\lambda_2}\cdots.
\]
The $p_r$ are algebraically independent over
$\mathbb{Q}$, and the $p_{\lambda}$ form a basis of $\Lambda_{\mathbb{Q}}$\cite{Mac95}.

\subsection{Plethystic notation and substitution}\label{subsec-2.5}
The plethystic notation is now a fundamental tool when dealing with symmetric functions, see e.g.,~\cite{haglund,Lascoux,RW}. For $f(X)$ a symmetric function in $X$, we additively write $X=x_1+x_2+\cdots$, and use the plethystic brackets to indicate this additive notation:
\[
f(X)=f(x_1,x_2,\ldots)=f[x_1+x_2+\cdots]=f[X].
\]
Then for two alphabets $X$ and $Y$, the sum $X+Y$ is the disjoint union of these sets. 

The above notation implies the following identity for the power sum symmetric functions:
\begin{equation}\label{asm1}
    p_r[X+Y]=p_r[X]+p_r[Y].
\end{equation}
This identity further motivates the following definitions that a power sum symmetric function acts on the difference and Cartesian product of two alphabets.
\begin{subequations}\label{asm}
\begin{align}
p_r[X-Y]&:=p_r[X]-p_r[Y], \\
p_r[XY]&:=p_r[X]p_r[Y].\label{asm3}
\end{align}
\end{subequations}
Suppose $\alpha$ is a single-letter alphabet such that $|\alpha|<1$. By \eqref{asm1} and \eqref{asm3},
\[
p_r\left[X(1+\alpha+\alpha^2+\cdots)\right]=p_r[X]\sum_{k=0}^{\infty} p_r[\alpha^k]=
p_r[X]\sum_{k=0}^{\infty} \alpha^{kr}=\frac{p_r[X]}{1-\alpha^r}.
\]
Hence, it is natural to define the next division rule:
\begin{equation}\label{division}
p_r[X/(1-\alpha)]:=p_r[X(1+\alpha+\alpha^2+\cdots)]=\frac{p_r[X]}{1-\alpha^r}.
\end{equation}
Note that we cannot define the plethystic division for an arbitrary alphabet. Since the $p_r$ are algebraically independent and generate $\Lambda_{\mathbb{Q}}$, these definitions are extended to $f[X-Y]$, $f[XY]$ and $f[X/(1-\alpha)]$ for any symmetric function $f\in\Lambda_{\mathbb{Q}}$.

From \eqref{asm1}, we also have
\[
p_r[2X]:=p_r[X+X]=2p_r[X].
\]
We extend the above identity to any $k\in \mathbb{F}$ by defining
\begin{equation}
p_r[kX]:=kp_r[X].
\end{equation}
Note that this leads to some notational ambiguities, and whenever it is not clear from the context we will indicate if a symbol represents a letter or a binomial element\footnote{In \cite[p. 32]{Lascoux} Lascoux refers to $k\in \mathbb{F}$ as a binomial element.}.

Occasionally we need to use an ordinary minus sign in plethystic notation.
To distinguish this from a plethystic minus sign, we denote by $\epsilon$ the alphabet consisting of the single letter $-1$, so that for $f\in \Lambda_{\mathbb{F}}$
\[
f(-x)=f(-x_1,-x_2,\dots)=f[\epsilon x_1+\epsilon x_2+\cdots]=f[\epsilon X].
\]
In particular,
\[
p_r[\epsilon X]=(-1)^rp_r[X].
\]
For a single-letter alphabet $a$, if $f$ is a homogeneous symmetric function of degree $k$ then
\begin{equation}\label{e-homo-sym}
f[aX]=f(ax_1,ax_2,\ldots)=a^k f(x_1,x_2,\ldots)=a^kf[X].
\end{equation}
In particular, we have
\begin{equation}\label{e-Mac3}
f[\epsilon X]=(-1)^kf[X].
\end{equation}

\subsection{The symmetric Macdonald polynomials} Define the $(q,t)$-Hall scalar product on $\Lambda_{\mathbb{F}}$ by~\cite[page 306]{Mac95}
\[
\langle p_{\lambda}, p_{\mu}\rangle:=\delta_{\lambda,\mu}z_{\lambda}\prod_{i=1}^{\ell(\lambda)}\frac{1-q^{\lambda_i}}{1-t^{\lambda_i}},
\]
where $\lambda$ and $\mu$ are partitions, $\delta_{\lambda,\mu}$ is the Kronecker symbol, and $z_{\lambda}:=\prod_{i\geq 1}i^{m_i}m_i!$ if $m_i$ is the number of parts of $\lambda$ that equal $i$. The Macdonald polynomials $P_{\lambda}=P_{\lambda}(X;q,t)$ are the unique symmetric functions \cite[VI, (4.7)]{Mac95} such that
\[
\langle P_{\lambda},P_{\mu}\rangle=0 \quad \text{if} \quad \lambda\neq \mu
\]
and
\begin{equation}\label{eq-Mac-1}
P_{\lambda}=m_{\lambda}+\sum_{\mu<\lambda}c_{\lambda\mu}m_{\mu}, \quad c_{\lambda\mu}\in \mathbb{F}.
\end{equation}
Here $\leq$ is the dominance order on partitions. By~\eqref{eq-Mac-1}, $P_\lambda(X_n;q,t)=0$ if $\ell(\lambda)>n$, and the sets $\{P_\lambda(X)\}$ and $\{P_\lambda(X_n)\}_{\ell(\lambda)\leq n}$ form bases for $\Lambda_{\mathbb{F}}$ and $\Lambda_{\mathbb{F},n}$, respectively. From~\cite[VI, (4.14)]{Mac95} we have
\begin{equation}\label{eq-mac-qt-inverse}
P_\lambda(X;q,t)=P_\lambda(X;q^{-1},t^{-1}).
\end{equation}
If $\lambda$ is a partition of length $n$, then~\cite[VI, (4.17)]{Mac95}
\begin{equation}\label{eq-Mac-3}
P_{\lambda}(X_n;q,t)=x_1\cdots x_n P_{(\lambda_1-1,\ldots,\lambda_n-1)}(X_n;q,t).
\end{equation}

In the subsequent proof we need the following vanishing condition for Macdonald polynomials. 
\begin{prop}~\cite[Proposition 3.3]{Zhou-tran}\label{prop-Mac-vanish}
Let $\lambda$ be a partition and let $P_{\lambda}(X;q,t)$ be the Macdonald polynomial. For any nonzero $\lambda_i$, 
\begin{equation}\label{eq-Mac-4}
P_{\lambda}\Big(\Big[\frac{1-q}{t-1}\big(\alpha_1+\cdots+\alpha_{\lambda_i-1}\big)+\beta_1+\cdots+\beta_{i-1}\Big];q,t\Big)=0,
\end{equation}
where the $\alpha_j$ and the $\beta_j$ are single-letter alphabets (or monic monomials).
\end{prop}

\begin{comment}
We also require the skew Macdonald polynomials defined by
\begin{equation}\label{eq-mac-skew}
P_\lambda\big(\big[ X+Y \big];q,t\big)=\sum_\mu P_{\lambda/\mu}\big(X;q,t\big) P_\mu \big(Y;q,t\big),
\end{equation}
where $X$ and $Y$ are two disjoint alphabets. In particular, $P_{\lambda/\emptyset}=P_\lambda$. The skew Macdonald polynomial $P_{\lambda/\mu}$ is homogeneous of degree $|\lambda|-|\mu|$ and vanishes unless $\mu\subseteq \lambda$ \cite[p. 344]{Mac95}
\end{comment}

\subsection{The nonsymmetric Macdonald polynomials}
Define a scalar product on $\mathbb{F}\big[x_1,\ldots,x_n\big]$ by
\begin{equation}\label{def-nonmac-innerproduct}
\langle f, g\rangle_{n}=\langle f, g\rangle_{q,t,n}:=\underset{x}{\mathrm{CT}}\big(f(x ; q, t) g(x^{-1}; q^{-1},t^{-1}) W(x)\big),
\end{equation}
where
\[
W(x)=W(x;q,t):=\prod_{1\leq i<j\leq n}\cfrac{(x_i/x_j)_\infty (qx_j/x_i)_\infty}{(tx_i/x_j)_\infty (qtx_j/x_i)_\infty}
\]
is the weight function. In particular, for $t=q^c$ with $c$ a nonnegative integer,
\[
W(x;q,q^c)=\prod_{1 \leq i<j \leq n}(x_i/x_j )_c (qx_j/x_i)_c.
\]
For the relation between $\langle-,-\rangle_n$ and $\langle-,-\rangle$, we refer the reader to~\cite[\S5]{Mac-affine}. 

The nonsymmetric Macdonald polynomials $E_\nu(x;q,t)\in\mathbb{F}[x_1,\ldots,x_n]$ ($\nu\in\mathbb{N}^n$) are the unique polynomials such that
\begin{equation}\label{eq-nonmac-ortho}
\langle E_\nu(x;q,t),E_\eta(x;q,t)\rangle_n=0\quad \text{ if $\nu\neq \eta$}
\end{equation}
and
\begin{equation}\label{eq-nonmac-uppertriangular}
E_\nu(x;q,t)=x^\nu+\sum_{\eta\prec \nu}c_{\nu\eta} x^\eta,\quad c_{\nu\eta}\in\mathbb{F}.
\end{equation}
Here $\preceq$ is the Bruhat order on compositions. By~\eqref{eq-nonmac-uppertriangular}, the nonsymmetric Macdonald polynomial $E_\nu(x;q,t)$ is homogeneous of degree $|\nu|$, and the set $\{E_\nu(x;q,t):\nu\in\mathbb{N}^n\}$ forms a basis of $\mathbb{F}[x_1,\ldots,x_n]$. 

It is not hard to obtain the following stability property from the definition.
\begin{equation}\label{eq-nonmac-stability}
E_{\nu}(x_1,\ldots,x_{n-1},x_n)\Mid_{x_n=0}=
\begin{cases}
E_{\nu^{(n)}}(x_1,\ldots,x_{n-1})& \text{if }\nu_n = 0,\\
0& \text{if }\nu_n\neq 0,
\end{cases}
\end{equation}
where $\nu^{(n)}=(\nu_1,\ldots,\nu_{n - 1})$. From~\eqref{eq-nonmac-ortho} and ~\eqref{eq-nonmac-uppertriangular}, we can also obtain the following lemma.
\begin{lem}\label{lem-nonmac-vanish-property}
    Let $\eta ,\nu\in\mathbb{N}^n$. Then $\langle x^\eta,E_\nu \rangle_{n}=0$ if $\nu\not\preceq \eta$.
\end{lem}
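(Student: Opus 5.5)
We have to prove Lemma~\ref{lem-nonmac-vanish-property}: for $\eta,\nu\in\mathbb{N}^n$, $\langle x^\eta,E_\nu\rangle_n=0$ whenever $\nu\not\preceq\eta$. The plan is to expand the monomial $x^\eta$ in the basis of nonsymmetric Macdonald polynomials and then apply orthogonality~\eqref{eq-nonmac-ortho} term by term.

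First, invert the unitriangular relation~\eqref{eq-nonmac-uppertriangular}. Homogeneity means only the finitely many compositions of size $|\eta|$ are involved. The Bruhat order $\preceq$ restricted to this finite set is a partial order, and the transition matrix from $\{x^\alpha\}$ to $\{E_\alpha\}$ is unitriangular with respect to it. Its inverse is therefore also unitriangular, which gives
\[
x^\eta=E_\eta+\sum_{\alpha\prec\eta}d_{\eta\alpha}E_\alpha,\qquad d_{\eta\alpha}\in\mathbb{F}.
\]
This can be proved by induction along a linear extension of $\preceq$. Write $x^\eta=E_\eta-\sum_{\alpha\prec\eta}c_{\eta\alpha}x^\alpha$. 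By induction, each $x^\alpha$ with $\alpha\prec\eta$ is a combination of $E_\beta$ with $\beta\preceq\alpha$. Transitivity of $\preceq$ then gives $\beta\prec\eta$.

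Transitivity needs a short check, since it is the only place where the definition of $\preceq$ matters:
\begin{itemize}
\item If $\alpha^+<\beta^+$ and $\beta^+<\gamma^+$, then $\alpha^+<\gamma^+$, because dominance is a partial order.
\item In the mixed cases, one of the two comparisons has equal sorted parts, so $\alpha^+<\gamma^+$ again.
\item If $\alpha^+=\beta^+=\gamma^+$, then $\alpha\le\beta\le\gamma$ in dominance on compositions, so $\alpha\le\gamma$.
\end{itemize}
Antisymmetry follows in the same way.

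Second, take the scalar product with $E_\nu$. The product $\langle\cdot,\cdot\rangle_n$ is linear in the first argument, since it is $f(x;q,t)$ that enters, and coefficients in $\mathbb{F}$ pass through. This gives
\[
\langle x^\eta,E_\nu\rangle_n=\sum_{\alpha\preceq\eta}d_{\eta\alpha}\langle E_\alpha,E_\nu\rangle_n,\qquad d_{\eta\eta}=1.
\]
Note that $d_{\eta\alpha}$ is a scalar in $\mathbb{F}$, so applying $q\mapsto q^{-1}$, $t\mapsto t^{-1}$ only to the second argument does not disturb this. By~\eqref{eq-nonmac-ortho}, each term vanishes unless $\alpha=\nu$. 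If $\nu\not\preceq\eta$, then $\nu$ is not among the indices $\alpha\preceq\eta$, so every term vanishes.

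The argument is short. The only point that needs care is the first step: verifying that $\preceq$ is a genuine partial order, so that the triangular inversion is legitimate.
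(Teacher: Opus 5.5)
Your proof is correct and takes the same route the paper indicates: the paper gives no separate argument and simply says the lemma follows from the orthogonality \eqref{eq-nonmac-ortho} and the unitriangularity \eqref{eq-nonmac-uppertriangular}. Like that intended argument, you invert the triangular expansion to write $x^\eta$ as a combination of $E_\alpha$ with $\alpha\preceq\eta$, then kill each term by orthogonality. Your check that $\preceq$ is a genuine partial order, which makes the triangular inversion legitimate, fills in a detail the paper leaves implicit.
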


We also need the next lemma and its proof is similar to the case of symmetric Macdonald polynomials\cite[Section 6.7]{Mac95}. However, due to the lack of a suitable direct reference, we relegate its proof to the Appendix.
\begin{lem}\label{cor-nonmac-splitvariables}
   For an integer $s$ such that $0\leq s\leq n$, let $I:=\{i_1,\ldots,i_s\}$ and $J:=\{j_1,\ldots,j_{n-s}\}$ be two disjoint sets such that $I\cup J=\{1,\ldots,n\}$. The nonsymmetric Macdonald polynomials $E_\nu(x;q^{-1},t^{-1})$ can be expanded as
    \begin{equation}\label{eq-nonmac-splitvariables-finalform} E_\nu(x;q^{-1},t^{-1})=\sum_{\substack{\rho\in\mathbb{N}^s,\gamma\in \mathbb{N}^{n-s}\\\rho^+,\gamma^+\subseteq \nu^+}}
    c_{\rho\gamma}^\nu(I;J) E_{\rho}(x_{i_1},\ldots,x_{i_s};q^{-1},t^{-1}) E_\gamma(x_{j_1},\ldots,x_{j_{n-s}};q^{-1},t^{-1}),
    \end{equation}
    where $c_{\rho\gamma}^\nu(I;J)\in \mathbb{F}$.
\end{lem}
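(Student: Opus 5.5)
The plan is to prove the lemma by an orthogonality argument modeled on the symmetric case in \cite[Section 6.7]{Mac95}, with the Bruhat order used in place of the dominance order. Everything is invariant under relabelling the variables, so we may relabel so that $I=\{1,\ldots,s\}$ and $J=\{s+1,\ldots,n\}$; the general case then follows by permuting variables.

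\textbf{Step 1: existence of some expansion.} The products $E_\rho(x_I;q^{-1},t^{-1})E_\gamma(x_J;q^{-1},t^{-1})$, with $\rho\in\mathbb{N}^s$ and $\gamma\in\mathbb{N}^{n-s}$, form a basis of $\mathbb{F}[x_1,\ldots,x_n]=\mathbb{F}[x_I]\otimes\mathbb{F}[x_J]$. Hence an expansion of the form \eqref{eq-nonmac-splitvariables-finalform} exists, a priori with $\rho,\gamma$ unrestricted. Homogeneity of $E_\nu$ forces $|\rho|+|\gamma|=|\nu|$. It remains to show that $c^\nu_{\rho\gamma}\neq 0$ implies $\rho^+\subseteq\nu^+$ and $\gamma^+\subseteq\nu^+$.

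\textbf{Step 2: containment via monomials.} By the triangularity \eqref{eq-nonmac-uppertriangular}, every monomial $x^\alpha$ in $E_\nu$ satisfies $\alpha\preceq\nu$, so $\alpha^+\leq\nu^+$ in dominance. The difficulty is that dominance does not give containment. I would therefore not argue with dominance alone, but use instead a stronger support property: every monomial $x^\alpha$ appearing in $E_\nu$ satisfies $\alpha^+\subseteq\nu^+$ coordinatewise. This should follow from the Knop--Sahi combinatorial formula (or the Haglund--Haiman--Loehr formula), where monomials come from fillings of the diagram of $\nu$ and the row lengths $\alpha_i$ are bounded by column structure. Alternatively, it should follow from the Knop--Sahi recursion $E_{\Phi\nu}=x_nE_\nu(qx_n,x_1,\ldots,x_{n-1})$ together with the Demazure--Lusztig operators $T_i$, which preserve the property ``all monomials $x^\alpha$ have $\alpha^+\subseteq\lambda$'' for any fixed partition $\lambda$. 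I would verify this invariance directly: $T_i$ acts on $x_i^ax_{i+1}^b$ by producing monomials whose exponent pairs lie between $(a,b)$ and $(b,a)$, and each such pair is contained in the sorted pair of $(a,b)$. The $\Phi$ step raises one part by $1$, and this matches the corresponding growth of $\nu^+$.

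\textbf{Step 3: transfer to the split basis.} Triangularity in each block (Bruhat order in $s$ and in $n-s$ variables) lets us invert. The span of those $x^\alpha$ with $\alpha_I^+\subseteq\lambda$ and $\alpha_J^+\subseteq\lambda$ equals the span of those $E_\rho(x_I)E_\gamma(x_J)$ with $\rho^+\subseteq\lambda$ and $\gamma^+\subseteq\lambda$, because $E_\rho$ is unitriangular in monomials with support contained in $\rho^+$, by Step 2. Restricting a composition $\alpha$ with $\alpha^+\subseteq\nu^+$ to $I$ or to $J$ gives sorted parts still contained in $\nu^+$. Therefore $E_\nu$ lies in that span with $\lambda=\nu^+$, and the coefficients $c^\nu_{\rho\gamma}$ vanish unless $\rho^+,\gamma^+\subseteq\nu^+$. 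The parameter inversion $(q,t)\mapsto(q^{-1},t^{-1})$ is harmless, since all statements are about supports.

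\textbf{Main obstacle.} The main obstacle is the support property in Step 2. I would first try the $T_i$/$\Phi$ recursion, since it is self-contained. The delicate point is checking that the $\Phi$ step, which is applied when $\nu$ is built from $\nu'$ with $\nu^+$ obtained from $\nu'^+$ by adding one box, keeps the containment.
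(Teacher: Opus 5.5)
\textbf{There is a genuine gap.} Step 1 is fine. The support property in Step 2 is false, and Step 3 collapses with it. $E_\nu$ is homogeneous, so every monomial $x^\alpha$ in it has $|\alpha|=|\nu|$. Hence $\alpha^+\subseteq\nu^+$ would force $\alpha^+=\nu^+$, i.e.\ $E_\nu$ would involve only rearrangements of $x^\nu$. This fails already for $n=2$ and $t=q$. We have $E_{(1,1)}=x_1x_2$, and orthogonality $\langle E_{(0,2)},E_{(1,1)}\rangle_2=0$ gives $E_{(0,2)}=x_2^2+\frac{1}{1+q}x_1x_2$, where $(1,1)\not\subseteq(2,0)$.

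Your $T_i$ check is where this slips: $T_1x_1^2=x_2^2+(1-t)x_1x_2$. The middle pair $(1,1)$ lies between $(2,0)$ and $(0,2)$ but is not contained in $(2,0)$. The correct statement only gives entry bounds of dominance type, not containment.

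The blockwise criterion that Step 3 actually uses also fails in a nontrivial case. By the stability property, $E_{(0,2,0)}(x_1,x_2,0)=E_{(0,2)}(x_1,x_2)$, so $E_{(0,2,0)}$ contains $x_1x_2x_3^0$. For $I=\{1,2\}$ this monomial has $\alpha_I^+=(1,1)\not\subseteq(2)$, which your Step 3 forbids. Yet the lemma holds, because that part of $E_{(0,2,0)}$ is exactly $E_{(0,2)}(x_1,x_2)E_{(0)}(x_3)$.

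The underlying problem is that $\mathrm{span}\{E_\rho(x_I):\rho^+\subseteq\lambda\}$ is not a span of monomials. In two variables its degree-$2$ part contains $x_2^2+\frac{1}{1+q}x_1x_2$ but not $x_1x_2=E_{(1,1)}$. So your claimed equality of spans is false, and $E_\rho$ is not ``unitriangular with support in $\rho^+$.'' No argument that only records which monomials occur in $E_\nu$ can prove the lemma once a block has two or more variables.

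\textbf{What is missing} is a mechanism that controls $E_\nu$ directly in the split basis. The paper obtains it from the Mimachi--Noumi Cauchy identity $\Omega(x;y|q,t)=\sum_\lambda a_\lambda E_\lambda(x;q,t)E_\lambda(y;q^{-1},t^{-1})$, combined with the kernel factorization $\Omega(x;y)=\Omega(x;y_I)\Omega(x;y_J)$. Comparing coefficients of $E_\lambda(x;q,t)$ gives
\[
E_\lambda(y;q^{-1},t^{-1})=\sum_{\mu,\nu}\frac{a_\mu a_\nu}{a_\lambda}f^\lambda_{\mu\nu}E_\mu(y_I;q^{-1},t^{-1})E_\nu(y_J;q^{-1},t^{-1}),
\]
where $f^\lambda_{\mu\nu}$ are the structure constants in $E_\mu E_\nu=\sum_\lambda f^\lambda_{\mu\nu}E_\lambda$.

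A Pieri-type formula then shows $f^\lambda_{\mu\nu}=0$ unless $\mu\ll\lambda$ and $\nu\ll\lambda$. This yields $\mu^+,\nu^+\subseteq\lambda^+$. Finally, setting the $y_J$ (resp.\ $y_I$) variables to zero turns $E_\mu$ into a combination of $E_\eta$ in the remaining variables with $\eta^+=\mu^+$. The containment thus comes from the multiplicative structure of the $E$'s, namely which $E_\lambda$ can occur in a product $E_\mu E_\nu$, and not from monomial supports.
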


We also need a simple fact that follows from Lemma~\ref{cor-nonmac-splitvariables}.
\begin{cor}\label{cor-nonmac-degree}
    Let  $\{u_1,\ldots,u_s\}$ be a subset of $\{1,\ldots,n\}$, $\nu=(\nu_1,\ldots,\nu_n)$ be a composition and $\nu^+=(\nu_1^+,\ldots,\nu_n^+)$ be the partition obtained by reordering the entries of $\nu$. Then the total degree of $E_{\nu}(x;q,t)$ in the variables $x_{u_1},\ldots,x_{u_s}$ is at most $\nu_1^++\cdots+\nu_{s}^+$ and at least $\nu_{n-s+1}^+ +\cdots+\nu_n^+$.
\end{cor}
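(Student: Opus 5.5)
The plan is to read both bounds off the splitting expansion of Lemma~\ref{cor-nonmac-splitvariables}, using only homogeneity of the pieces. Take $I:=\{u_1,\ldots,u_s\}$ and $J:=\{1,\ldots,n\}\setminus I$. Lemma~\ref{cor-nonmac-splitvariables} is stated for $E_\nu(x;q^{-1},t^{-1})$. Since $q\mapsto q^{-1}$, $t\mapsto t^{-1}$ is a field automorphism of $\mathbb{F}=\mathbb{Q}(q,t)$, applying it to the whole identity gives the analogous expansion
\[
E_\nu(x;q,t)=\sum_{\substack{\rho\in\mathbb{N}^s,\gamma\in\mathbb{N}^{n-s}\\ \rho^+,\gamma^+\subseteq\nu^+}} \tilde c^{\,\nu}_{\rho\gamma}\, E_\rho(x_{u_1},\ldots,x_{u_s};q,t)\,E_\gamma(x_{j_1},\ldots,x_{j_{n-s}};q,t),
\]
with $\tilde c^{\,\nu}_{\rho\gamma}\in\mathbb{F}$. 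The cases $s=0$ and $s=n$ are trivial, since $E_\nu$ is homogeneous of degree $|\nu|$.

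Each $E_\rho$ is homogeneous of degree $|\rho|$ in the variables $x_I$, and each $E_\gamma$ is homogeneous of degree $|\gamma|$ in $x_J$. Hence every monomial in the term indexed by $(\rho,\gamma)$ has $x_I$-degree $|\rho|$ and $x_J$-degree $|\gamma|$.

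Now fix a monomial $x^\alpha$ that actually occurs in $E_\nu$. It must occur in at least one term of the sum, say the one indexed by $(\rho,\gamma)$. Then its $x_I$-degree equals $|\rho|$ and its $x_J$-degree equals $|\gamma|$. Since $E_\nu$ is homogeneous of degree $|\nu|$ by \eqref{eq-nonmac-uppertriangular}, we also get $|\rho|+|\gamma|=|\nu|$.

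\textbf{Upper bound.} Because $\rho\in\mathbb{N}^s$, the partition $\rho^+$ has at most $s$ nonzero parts. The condition $\rho^+\subseteq\nu^+$ gives $\rho^+_i\le\nu^+_i$ for each $i$, so
\[
|\rho|=\rho^+_1+\cdots+\rho^+_s\le\nu^+_1+\cdots+\nu^+_s.
\]

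\textbf{Lower bound.} In the same way, $\gamma\in\mathbb{N}^{n-s}$ and $\gamma^+\subseteq\nu^+$ give $|\gamma|\le\nu_1^++\cdots+\nu^+_{n-s}$. Therefore
\[
|\rho|=|\nu|-|\gamma|\ge\nu^+_{n-s+1}+\cdots+\nu^+_n.
\]

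Since every monomial of $E_\nu$ has $x_I$-degree between these two bounds, so does the total degree of $E_\nu$ in $x_{u_1},\ldots,x_{u_s}$.

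The only step needing care is that a monomial occurring in $E_\nu$ genuinely comes from some term of the expansion. This holds because the sum equals $E_\nu$ identically, so any monomial with nonzero coefficient on the left has nonzero coefficient in at least one summand. Cancellation between terms can only remove monomials, never create them, so it does not affect the argument. I expect no real obstacle beyond invoking Lemma~\ref{cor-nonmac-splitvariables} correctly after the $q,t$ inversion.
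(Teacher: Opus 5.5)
Your proof is correct and follows the paper's approach. The paper only states that the corollary follows from the splitting expansion of Lemma~\ref{cor-nonmac-splitvariables}, and you supply the details: the upper bound on $|\rho|$ comes from $\rho^+\subseteq\nu^+$, and the lower bound comes from $|\rho|=|\nu|-|\gamma|$ together with $\gamma^+\subseteq\nu^+$. Passing from $(q^{-1},t^{-1})$ to $(q,t)$ via the automorphism of $\mathbb{F}$ is legitimate, since it does not change which monomials occur.
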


\subsection{Macdonald polynomials with prescribed symmetry}\label{subsec-2-8}
Define operators $T_i$ acting on the Laurent polynomial ring $\mathbb{F}[x_1^\pm,\ldots,x_n^\pm]$ by
\begin{equation}\label{definition-nonmac-T_i}
T_i:=t+\frac{tx_i - x_{i + 1}}{x_i - x_{i+1}}(s_i-1),\quad i = 1,\ldots,n - 1.
\end{equation}
The operators $T_i$ are known as the Demazure--Lusztig operators. They satisfy the braid relations
\begin{subequations}\begin{equation}\label{eq-nonmac-hecke-relation-2}T_iT_{i + 1}T_i=T_{i + 1}T_iT_{i + 1}  \quad \text{ for $i=1,\ldots,n-2$}\end{equation}\begin{equation}\label{eq-nonmac-hecke-relation-3}T_iT_j=T_jT_i, \quad \text{ for $|i-j|>1$},\end{equation}\end{subequations}
and the quadratic relations
\label{eq-nonmac-hecke-relation}\begin{equation}\label{eq-nonmac-hecke-relation-1}(T_i - t)(T_i+1)=0 \quad \text{ for $i=1,\ldots,n-1$}.\end{equation}
Note that the operators $T_i$ reduce to the transpositions $s_i$ when $t=1$. For $f\in\mathbb{F}[x_1,\ldots,x_n]$, $T_if=t f$ if and only if $f$ is symmetric in $x_i$ and $x_{i+1}$. 

According to \cite{cauchy-formula}, for $i=1,\ldots,n-1$, the operator $T_i$ acts on $E_{\nu}$ as
\begin{equation}\label{eq-nonmac-T_iaction}
T_iE_{\nu}=\begin{cases}
     c_{i,\nu}E_\nu+c_{i,\nu}' E_{s_i\nu}\quad & \text{ if $\nu_i\neq \nu_{i+1}$},\\
     t E_\nu & \text{ if $\nu_i= \nu_{i+1}$},
\end{cases}
\end{equation}
where $c_{i,\nu}\neq c_{i,\nu}'\in\mathbb{F}\setminus\{0\}$ are explicitly given in\cite[(1.19)-(1.22)]{cauchy-formula}. Therefore, $E_\nu(x;q,t)$ is symmetric in $x_i$ and $x_{i+1}$ if and only if $\nu_i=\nu_{i+1}$.

For $\sigma\in\mathfrak{S}_n$ with reduced factorization $\sigma=s_{i_1}\cdots s_{i_\ell}$, define $T_\sigma:=T_{i_1}\cdots T_{i_\ell}$. By the braid relations for the $T_i$, such a definition does not depend on the choice of reduced expression of $\sigma$. In what follows, for $f\in\mathbb{F}[x_1,\ldots,x_n]$, we say $f$ is $t$-symmetric (resp. $t$-antisymmetric) in $x_1,\ldots,x_n$ if for any $\sigma\in\mathfrak{S}_n$, we have $T_\sigma f=t^{\ell(\sigma)}f$ (resp. $T_\sigma f=(-1)^{\ell(\sigma)}f$).

The $t$-symmetrization and $t$-antisymmetrization operators on the variables $X_n=\{x_1,\ldots,x_n\}$ are then defined as~\cite{Mac-affine}
\begin{equation}\label{definition-premac-sym,antisym-operator}
\mathcal{S}_t=\mathcal{S}_t^{(X_n)}:=\sum_{\sigma\in \mathfrak{S}_n} T_\sigma \quad \text{ and } \quad \mathcal{A}_t=\mathcal{A}_t^{(X_n)}:=\sum_{\sigma\in\mathfrak{S}_n} (-t)^{-\ell(\sigma)}T_\sigma.
\end{equation}
It is routine to check that~\cite{BDF}
\begin{equation*}
\begin{aligned}
 &\mathcal{S}_tT_{i}^{\pm 1}=T_{i}^{\pm 1}\mathcal{S}_t=t^{\pm 1}\mathcal{S}_t,\\
&\mathcal{A}_tT_{i}^{\pm 1}=T_{i}^{\pm 1}\mathcal{A}_t=-\mathcal{A}_t.
\end{aligned}
\end{equation*}
Therefore, any Laurent polynomial $f$ under $t$-symmetrization becomes $t$-symmetric, and becomes $t$-antisymmetric under $t$-antisymmetrization. In particular, for the staircase partition $\delta=\delta^n:=(n - 1,\ldots,2,1,0)$, we have
\[
t^{n(n - 1)/2}\mathcal{A}_tx^{\delta}=\prod_{1\leq i<j\leq n}(tx_{i}-x_{j})=:\Delta_{t}(X_n),
\]
and
\[
\mathcal{A}_tx^{\gamma}=0\quad\text{if } \gamma_{i}=\gamma_{j}\text{ for some }i\neq j.
\]
The notation $\Delta_t(X_n)$ will be referred to as the $t$-Vandermonde determinant.

We can express $\mathcal{S}_t$ and $\mathcal{A}_t$ in terms of the usual antisymmetrization operator $\mathcal{A}:=\mathcal{A}_t\Mid_{t=1}=\sum_{\sigma\in\mathfrak{S}_n}(-1)^{-\ell(\sigma)}\sigma$ by~\cite[eq.(2.26)] {Marshall}
\[
\mathcal{S}_t f(X_n)=t^{n(n - 1)/2}\cfrac{\mathcal{A}(\Delta_{t^{-1}}(X_n)f(X_n))}{\Delta(X_n)}\quad \text{ and }\quad \mathcal{A}_t f(X_n)=t^{-n(n - 1)/2}\cfrac{\Delta_t(X_n)}{\Delta(X_n)}\mathcal{A}f(X_n),
\]
where $\Delta(X_n):=\Delta_t(X_n)\Mid_{t=1}=\prod_{1\leq i<j\leq n}(x_i-x_j)$ is the usual Vandermonde determinant. Since $\mathcal{A}f(X_n)$ is anti-symmetric in $X_n$, it is divisible by the denominator $\Delta(X_n)$. Therefore $\mathcal{A}_t f(X_n)$ always has the factor $\Delta_t(X_n)$.

It is natural to restrict all the discussion to a subset of $X_n$. Specifically, let 
\[
\mathfrak{S}_m:=\langle s_i:1\leq i\leq m-1\rangle\quad \text{ and }\quad \mathfrak{S}_{n\setminus m}:=\langle s_i:m+1\leq i\leq n-1\rangle
\]
be the subgroups of $\mathfrak{S}_n$ acting on the variables $X_m=\{x_1,\ldots,x_m\}$ and $X_{n\setminus m}=\{x_{m+1},\ldots,x_{n}\}$, respectively. Define the $t$-antisymmetrization operator and $t$-symmetrization operator on $X_m$ and $X_{n\setminus m}$, respectively, as
\begin{equation}\label{def-nonmac-prescibed-partial-sym-antisym}
\mathcal{A}_t^{(X_m)}:=\sum_{\sigma\in \mathfrak{S}_m} (-t)^{-\ell(\sigma)} T_\sigma \quad \text{ and } \quad \mathcal{S}_t^{(X_{n\setminus m})}:=\sum_{\sigma\in \mathfrak{S}_{n\setminus m}}   T_\sigma.
\end{equation}
Then for any Laurent polynomial $f$ in $X_n$, the Laurent polynomial $\mathcal{A}_t^{(X_m)}\mathcal{S}_t^{(X_{n\setminus m})} f$ is $t$-antisymmetric in the first $m$ variables $X_m$, and $t$-symmetric in the remaining variables $X_{n\setminus m}$.

\begin{rem}\label{rem-nonmac}
    It follows from (\ref{eq-nonmac-T_iaction}) and (\ref{def-nonmac-prescibed-partial-sym-antisym}) that $\mathcal{A}_t^{(X_m)}\mathcal{S}_t^{(X_{n\setminus m})} E_\eta(x;q,t)$ is actually a linear combination of those $E_\gamma(x;q,t)$ with $\gamma^+=\eta^+$.
\end{rem}

The following theorem reveals that under a certain restriction on the composition $\eta$, the polynomial $\mathcal{A}_t^{(X_m)}\mathcal{S}_t^{(X_{n\setminus m})} E_\eta(x;q,t)$ is either zero (if $\eta_i=\eta_{i+1}$ for some $1\leq i\leq m-1$), or, up to a constant multiple, a product of a $t$-Vandermonde determinant and symmetric Macdonald polynomials.
\begin{theorem}\label{thm-nonmac-factorization}
    Let $n,m$ be positive integers such that $m\leq n$, $\lambda,\xi$ be partitions such that $\ell(\lambda)< m$, $\ell(\xi)\leq n-m$, and $\eta:=(\lambda+\delta^m,\xi)$. Then we can write
    \begin{equation}
  \mathcal{A}_t^{(X_m)}\mathcal{S}_t^{(X_{n\setminus m})} E_\eta(x;q,t)= c_\eta \Delta_t(X_m) P_\lambda\Big(\Big[X_m+\cfrac{q(1-t)}{1-qt}X_{n\setminus m}\Big];q,qt\Big) P_\xi\Big(\Big[ X_{n\setminus m} \Big];qt,t\Big)
    \end{equation}
    in the following cases:
    \begin{enumerate}
        \item $\xi=\emptyset$;
        \item $\lambda=\emptyset$ and $\xi_1\leq m$;
        \item $\lambda,\xi\neq \emptyset$, and $|\lambda|+|\xi|\leq \min\{n-m,m\}$.
    \end{enumerate}
    Here $c_\eta\in\mathbb{F}$ is a nonzero constant.
\end{theorem}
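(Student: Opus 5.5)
The plan is to identify the prescribed-symmetry polynomial $f:=\mathcal{A}_t^{(X_m)}\mathcal{S}_t^{(X_{n\setminus m})}E_\eta(x;q,t)$ by a structural argument. First I would show that $f$ lies in a small space, and then that the product on the right-hand side lies in the same space and is characterised there by a leading monomial.

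\textbf{Step 1: reduction to a symmetric quotient.} Since $f$ is $t$-antisymmetric in $X_m$, the discussion in Section~\ref{subsec-2-8} (via $\mathcal{A}_t=t^{-m(m-1)/2}\frac{\Delta_t}{\Delta}\mathcal{A}$ on $X_m$) gives $f=\Delta_t(X_m)g$. Here $g$ is symmetric in $X_m$; it is also symmetric in $X_{n\setminus m}$ because $T_if=tf$ there.

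\textbf{Step 2: eigenvalue characterisation.} By Remark~\ref{rem-nonmac}, $f$ is a combination of $E_\gamma$ with $\gamma^+=\eta^+$. Hence $f$ is a simultaneous eigenfunction of the Cherednik operators $Y_i$ for every symmetric combination, i.e. of all symmetric functions of the $Y_i$. Let $D$ be the Macdonald $q$-difference operator corresponding to $e_1(Y)$. Its eigenvalue on $f$ is determined by $\eta^+$, so it equals $\sum_i q^{\eta^+_i}t^{\,\cdot}$ with the usual spectral exponents. Moreover, $f$ is the unique (up to scalar) element with this prescribed symmetry whose expansion has top monomial $x^{\eta}$ in the Bruhat order. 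I would then compute the action of $D$ on $\Delta_t(X_m)\,G$ for $G$ symmetric in both blocks. Commuting through $\Delta_t(X_m)$ should turn $D$ into a "super" Macdonald operator acting on $G$. On $X_m$ this operator has the shifted parameter $(q,qt)$, the Vandermonde $\Delta_t$ effectively shifting $t\to qt$ as in the antisymmetric Macdonald theory. On $X_{n\setminus m}$ it has parameters $(qt,t)$, and there is a coupling term between the blocks.

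\textbf{Step 3: verifying the candidate.} Next I would show that the candidate $G=P_\lambda[X_m+\frac{q(1-t)}{1-qt}X_{n\setminus m};q,qt]\,P_\xi[X_{n\setminus m};qt,t]$ is an eigenfunction of this induced operator with the correct eigenvalue, and that $\Delta_t(X_m)G$ has top term $x^{\eta}$. In case (1), $\xi=\emptyset$, the plethystic alphabet $X_m+\frac{q(1-t)}{1-qt}X_{n\setminus m}$ is exactly what makes $P_\lambda$ an eigenfunction, by the known "deformed/super" Macdonald result (cf.\ \cite{BDF}). This is the core computation, which I would do with the generating-function form of the Macdonald operator via plethystic kernels.

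\textbf{Step 4: uniqueness and the main obstacle.} Uniqueness requires that no other $\gamma$ with $\gamma^+=\eta^+$ produces the same eigenvalue and the same symmetry type, so that the eigenspace of the prescribed-symmetry projection is one-dimensional. For general $(\lambda,\xi)$ the product form fails, because the eigenvalue of the product is not simply additive once the blocks interact through the plethystic term. In cases (2) and (3), the condition $\xi_1\le m$ or $|\lambda|+|\xi|\le\min\{n-m,m\}$ should guarantee the following: either the cross terms vanish (as in Proposition~\ref{prop-Mac-vanish}), or the degree is small enough that a triangularity comparison in dominance order, together with Corollary~\ref{cor-nonmac-degree} bounding degrees in the $X_{n\setminus m}$ variables, forces equality of the two expansions. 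I expect this step to be the main obstacle. I would first attempt case (2) by matching coefficients of $x^{\delta^m}\times m_\nu(X_{n\setminus m})$ directly, and case (3) by induction on $|\lambda|+|\xi|$ using the stability property \eqref{eq-nonmac-stability} to set extra variables to zero. Finally, $c_\eta\neq0$ follows by comparing the coefficient of $x^\eta$, which is nonzero since $\mathcal{A}_t^{(X_m)}\mathcal{S}_t^{(X_{n\setminus m})}$ acts triangularly with nonzero diagonal when the entries $\eta_1>\dots>\eta_m$ are distinct.
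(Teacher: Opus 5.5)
Your outline has a genuine gap at its core: Step~3 is the entire content of the theorem, and it is not carried out. Steps~1--2 are routine: $f=\Delta_t(X_m)g$ with $g$ symmetric in each block, and $f$ lies in the span of the $E_\gamma$ with $\gamma^+=\eta^+$. What must actually be proved is that $\Delta_t(X_m)P_\lambda\big[X_m+\frac{q(1-t)}{1-qt}X_{n\setminus m};q,qt\big]P_\xi\big[X_{n\setminus m};qt,t\big]$ is an eigenfunction of $e_1(Y)$ with the eigenvalue attached to $\eta^+$. You do not derive the operator that $e_1(Y)$ induces on $G$. On polynomials that are not fully symmetric it is not the Macdonald $q$-difference operator: it carries Demazure--Lusztig exchange terms, and the two blocks interact through these. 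You also do not verify the eigen-equation. For case~(1) you appeal to a ``known deformed/super Macdonald result (cf.~\cite{BDF})''. But \cite{BDF} gives case~(2), where $\lambda=\emptyset$, and the paper states that case~(1) has, to the authors' knowledge, not been established before. For cases~(2) and~(3) you give no mechanism by which $\xi_1\le m$ or $|\lambda|+|\xi|\le\min\{n-m,m\}$ would enter.

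For comparison, the paper never uses an eigenoperator, and it proves only case~(1). It places the candidate in $\mathrm{Asym}_{q^c}[X_m]$, whose orthogonal basis is $\{\mathcal{A}^{(X_m)}_{q^c}E_{(\mu+\delta^m,\rho)}\}$. It then shows the candidate is orthogonal to every basis element except $(\mu,\rho)=(\lambda,0^{n-m})$. To do this, the inner products are identified with constant terms $T_{n,m}(0,0,c,\nu,\lambda)$. The ratio identity of Proposition~\ref{prop-prooflem-T_R}, built on the polynomiality and vanishing results of Sections~\ref{sec-proof1}--\ref{sec-proof2}, then yields $\nu^+=(\lambda+\delta^m,0^{n-m})$ on setting $b=0$. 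Cases~(2) and~(3) are quoted from \cite{BDF} and \cite{double-mac}.

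Your uniqueness step is also wrong as formulated. Every symmetric function of $Y_1,\dots,Y_n$ has the same eigenvalue on all $E_\gamma$ with $\gamma^+=\eta^+$. So inside the prescribed-symmetry space, the $e_1(Y)$-eigenspace is spanned by all $\mathcal{A}_t^{(X_m)}\mathcal{S}_t^{(X_{n\setminus m})}E_\gamma$ with $\gamma_1>\dots>\gamma_m$, $\gamma_{m+1}\ge\dots\ge\gamma_n$ and $\gamma^+=\eta^+$. The requirement in your Step~4 that ``no other $\gamma$ produces the same eigenvalue'' can therefore never be met. A top-monomial comparison only discards the splits with $\gamma\not\le\eta$.
\begin{itemize}
\item In case~(1) this suffices. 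The entries of $\lambda+\delta^m$ are exactly $m$ distinct values including $0$, so $\eta$ is the only admissible split; the same count closes the paper's proof of Proposition~\ref{prop-proof4-1}.
\item In case~(3) it fails. Take $m=3$, $n=6$, $\lambda=(1)$, $\xi=(2)$. Then $\eta=(3,1,0,2,0,0)$, and $\gamma=(2,1,0,3,0,0)$ satisfies $\gamma^+=\eta^+$ and $\gamma<\eta$. Adding any multiple of $\mathcal{A}_t^{(X_m)}\mathcal{S}_t^{(X_{n\setminus m})}E_\gamma$ to $f$ preserves the symmetry type, the eigenvalue and the leading monomial $x^\eta$.
\end{itemize}
To separate splits you would need the finer commuting family $e_k(Y_1,\dots,Y_m)$, $e_k(Y_{m+1},\dots,Y_n)$, which commute with $T_i$ for $i\neq m$. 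You would then also have to show that the candidate is an eigenfunction of those operators.
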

Note that the conditions in the above theorem are the same as those in Theorem~\ref{main-thm}. The second case of the theorem was established by Baker et al. \cite[Proposition 2]{BDF}, while the third case was found by Blondeau-Fournier et al. \cite[Theorem 3]{double-mac}. However, to our knowledge, the first case of the above theorem has not been previously established. We will give its proof in Section~\ref{sec-proof4}. We also note that the constant $c_\eta$ in the above theorem in each case can be explicitly computed, but we omit its precise expression as it is not necessary in this paper.

\subsection{Constant term evaluations}
In this subsection, we present a basic lemma for extracting constant terms from rational functions in the framework of the field of iterated Laurent series. For more details, we refer to~\cite{xinresidue} and~\cite{xiniterate}.

Throughout this paper, we work in the field of iterated
Laurent series 
\[
\mathbb{F}\langle\!\langle x_n, x_{n-1},\dots,x_0\rangle\!\rangle
=\mathbb{F}(\!(x_n)\!)(\!(x_{n-1})\!)\cdots (\!(x_0)\!).
\]
Elements of $\mathbb{F}\langle\!\langle x_n,x_{n-1},\dots,x_0\rangle\!\rangle$
are regarded first as Laurent series in $x_0$, then as
Laurent series in $x_1$, and so on. 
The most applicable fact is that the field $\mathbb{F}(x_0,\dots,x_n)$ of
rational functions in the variables $x_0,\dots,x_n$ with coefficients in $\mathbb{F}$
forms a subfield of $\mathbb{F}\langle\!\langle x_n, x_{n-1},\dots,x_0\rangle\!\rangle$, so that every rational function can be identified with its unique Laurent series expansion.

For $F(x) \in  \mathbb{F}\langle\!\langle x_n, x_{n-1},\dots,x_0\rangle\!\rangle$, the constant term of $F(x)$ in $x_i$, denoted by
$\underset{x_i}{\CT} F(x)$, is defined to be the sum of those terms
in the series expansion of $F(x)$ that are free of $x_i$. The constant term operators defined in $\mathbb{F}\langle\!\langle x_n,\dots,x_0\rangle\!\rangle$ commute with each other, i.e.
\[
\CT_{x_i} \CT _{x_j} f = \CT_{x_j} \CT_{x_i} f
\]
for any $i$ and $j$. Commutativity implies that taking the constant term with respect to a set of variables is well-defined. Therefore, for variables $x=\{x_1,\ldots,x_n\}$ we define
\[
\CT\limits_x:=\CT\limits_{x_1}\cdots \CT\limits_{x_n}
\]
in $\mathbb{F}\langle\!\langle x_n,\dots,x_0\rangle\!\rangle$. This definition extends the constant term operators used earlier.

In $\mathbb{F}\langle\!\langle x_n,\dots,x_0\rangle\!\rangle$ we might formally assume $0\ll x_0\ll x_1\ll \cdots \ll x_n\ll 1$. Here $x\ll y$ means $x$ is much less than $y$. That is, $|cx/y|<1$ for any $c$ belonging to a finite subset of $\mathbb{F}$. Hence, we can write
\[
\frac{1}{1-c x_i/x_j}=
\begin{cases} \displaystyle \sum_{l\geq 0} c^{l} (x_i/x_j)^l
& \text{if $i<j$}, \\[5mm]
\displaystyle -\sum_{l<0} c^{l} (x_i/x_j)^l
& \text{if $i>j$}
\end{cases}
\]
in this paper for $c\in\mathbb{F}\setminus\{0\}$. This implies that
\begin{equation}\label{eq-preliminary-laurent-ct}
\CT_{x_i} \frac{1}{1-c x_i/x_j} =
\begin{cases}
    1 & \text{if $i<j$}, \\
    0 & \text{if $i>j$}. \\
\end{cases}
\end{equation}

The following lemma, which first appeared in~\cite{GX}, is a basic tool for
extracting constant terms from rational functions.
\begin{lem}\cite[Lemma 4.1]{GX}\label{lem-preliminary-laruent-proper}
For a positive integer $l$, let $p(x_k)$ be a Laurent polynomial
in $x_k$ of degree at most $l-1$ with coefficients in
$\mathbb{F}\langle\!\langle x_n,\dots,x_{k+1},x_{k-1},\dots,x_0\rangle\!\rangle$.
Consider indices $0\leq i_1\leq\dots\leq i_l\leq n$ such that $i_r\neq k$ for $r=1,\ldots,l$. Then for
\begin{equation}\label{eq-preliminary-laruent-proper-1}
f=\frac{p(x_k)}{\prod_{r=1}^l (1-c_r x_k/x_{i_r})}
\end{equation}
with $c_1,\dots,c_l\in \mathbb{F}\setminus \{0\}$ such that $c_r\neq c_s$ whenever $x_{i_r}=x_{i_s}$, we have
\begin{equation}\label{eq-preliminary-laruent-proper-2}
\CT_{x_k} f=\sum_{\substack{r=1 \\[1pt] i_r>k}}^l
\big(f\,(1-c_rx_k/x_{i_r})\big)\Big|_{x_k=c_r^{-1}x_{i_r}}.
\end{equation}
\end{lem}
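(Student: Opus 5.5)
We need to propose a proof of the Gessel--Xin Lemma~\ref{lem-preliminary-laruent-proper}, the last statement before the cut.

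The plan is a partial fraction decomposition in the single variable $x_k$, followed by term-by-term use of the expansion rule \eqref{eq-preliminary-laurent-ct}. Throughout we treat all $x_j$ with $j\neq k$ as constants, working in the field of Laurent series in $x_k$ over $\mathbb{F}\langle\!\langle x_n,\dots,x_{k+1},x_{k-1},\dots,x_0\rangle\!\rangle$, so that each factor $1-c_rx_k/x_{i_r}$ is linear in $x_k$.

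First, the poles. The hypothesis $c_r\neq c_s$ whenever $x_{i_r}=x_{i_s}$ makes the roots $x_k=c_r^{-1}x_{i_r}$ pairwise distinct. Write $p(x_k)=x_k^{-N}\tilde p(x_k)$ with $\tilde p$ a polynomial of degree at most $N+l-1$.

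Second, the decomposition. Since $\deg p\le l-1$, the rational function $f$ admits the decomposition
\[
f=\sum_{r=1}^l \frac{A_r}{1-c_rx_k/x_{i_r}}+\sum_{j=1}^{N} \frac{B_j}{x_k^{j}},
\qquad A_r=\big(f\,(1-c_rx_k/x_{i_r})\big)\Big|_{x_k=c_r^{-1}x_{i_r}},
\]
with the $A_r, B_j$ free of $x_k$.

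\begin{itemize}
\item There is no polynomial part: $f\to$ a finite limit (in fact $0$ unless $\deg_{x_k} p=l-1$) as $x_k\to\infty$. More precisely, $f$ is $O(x_k^{\deg p-l})$, and $\deg p-l\le -1$, so the polynomial part vanishes.
\item The coefficients $A_r$ come from the residue (cover-up) computation, which is valid because the poles are simple.
\item The $B_j$ collect the principal part at $x_k=0$ coming from the negative powers in $p$. These terms contribute nothing to the constant term in $x_k$, since each is a pure negative power of $x_k$.
\end{itemize}

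Third, the constant term of each simple fraction. For $i_r>k$ we have $x_k\ll x_{i_r}$, so $1/(1-c_rx_k/x_{i_r})$ expands in nonnegative powers of $x_k$ and its constant term is $1$. For $i_r<k$ the expansion is in strictly negative powers of $x_k$, so its constant term is $0$. This is exactly \eqref{eq-preliminary-laurent-ct}, and it yields
\[
\CT_{x_k} f=\sum_{i_r>k}A_r,
\]
which is the claimed formula.

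The main subtlety is justifying that the algebraic partial fraction identity, which holds in the rational function field, can be expanded term by term in the iterated Laurent series field. This follows because rational functions embed uniquely into $\mathbb{F}\langle\!\langle x_n,\dots,x_0\rangle\!\rangle$ and the embedding is a field homomorphism. Hence the series expansion of $f$ equals the sum of the expansions of its partial fractions, and $\CT_{x_k}$ is linear on these series.

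The only other point to check is the absence of a polynomial part. This is where the degree bound $\deg p\le l-1$ is used, and it is why the statement requires it.
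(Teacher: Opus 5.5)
Your argument is correct and is essentially the standard proof. The paper does not reprove this lemma but quotes it from \cite[Lemma 4.1]{GX}, where it is proved the same way: partial fractions in $x_k$, no polynomial part because $\deg p\le l-1$, zero constant term from the principal part at $x_k=0$, and termwise application of \eqref{eq-preliminary-laurent-ct}.

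One wording fix concerns where the decomposition lives and where it is expanded. It lives in $K'(x_k)$ with $K'=\mathbb{F}\langle\!\langle x_n,\dots,x_{k+1},x_{k-1},\dots,x_0\rangle\!\rangle$, not in $\mathbb{F}(x_0,\dots,x_n)$, since $p$ has series coefficients. It must be expanded in $\mathbb{F}\langle\!\langle x_n,\dots,x_0\rangle\!\rangle$, not in Laurent series in $x_k$ over $K'$ as your first paragraph says. In the latter field every $1/(1-c_rx_k/x_{i_r})$ would expand in nonnegative powers of $x_k$, so the terms with $i_r<k$ would not drop out.
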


We also need the following two propositions. Each of them provides an equivalence between two kinds of constant terms. 
\begin{prop}\cite[Proposition 4.2]{Zhou-AFLT}\label{prop-preliminary-laruent-equiv}
Let $f(x)$ be a Laurent polynomial that is invariant under any permutation of $x$. Then
\begin{equation}\label{prop-preliminary-laruent-equiv-1}
\CT_{x} f(x)\prod_{1\leq i<j\leq n}(x_i/x_j)_c(qx_j/x_i)_c
=\frac{1}{n!}\prod_{i=1}^{n-1}\frac{1-q^{(i+1)c}}{1-q^c}\times\CT_{x} f(x)\prod_{1\leq i\neq j\leq n} (x_i/x_j)_c.
\end{equation}
\end{prop}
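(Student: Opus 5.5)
The statement to prove is Proposition~\ref{prop-preliminary-laruent-equiv}: for symmetric $f$, the constant term against the asymmetric weight $W(x;q,q^c)=\prod_{i<j}(x_i/x_j)_c(qx_j/x_i)_c$ is a fixed multiple of the constant term against the symmetric weight $\prod_{i\neq j}(x_i/x_j)_c$.

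\textbf{Step 1: reduce to a symmetrization identity.} Since $f$ is symmetric, the map $x\mapsto \sigma x$ preserves constant terms. Hence
\[
\CT_x f(x)\,W(x)=\frac1{n!}\CT_x f(x)\sum_{\sigma\in\mathfrak S_n}\sigma W(x).
\]
It therefore suffices to establish the Laurent polynomial identity
\[
\sum_{\sigma\in\mathfrak S_n}\sigma\Big(\prod_{i<j}(x_i/x_j)_c(qx_j/x_i)_c\Big)=\prod_{i=1}^{n-1}\frac{1-q^{(i+1)c}}{1-q^c}\prod_{i\neq j}(x_i/x_j)_c .
\]

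\textbf{Step 2: factor out the symmetric weight.} For $i<j$ we have
\[
(qx_j/x_i)_c=(x_j/x_i)_c\,\frac{1-q^cx_j/x_i}{1-x_j/x_i}.
\]
Therefore
\[
W(x)=\prod_{i\ne j}(x_i/x_j)_c\prod_{i<j}\frac{x_i-q^cx_j}{x_i-x_j}.
\]
The first factor is symmetric, so the desired identity reduces to the classical one
\[
\sum_{\sigma}\sigma\prod_{i<j}\frac{x_i-tx_j}{x_i-x_j}=\prod_{i=1}^{n}\frac{1-t^{i}}{1-t},\qquad t=q^c.
\]
The $i=1$ factor of the right-hand side equals $1$, which matches the product $\prod_{i=1}^{n-1}(1-q^{(i+1)c})/(1-q^c)$ in the statement.

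\textbf{Step 3: prove the classical identity.} This is Macdonald's Poincaré polynomial identity $\sum_{w}t^{\ell(w)}$ for $\mathfrak S_n$. I would prove it as follows.

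The left-hand side is a symmetric rational function. Its only possible poles lie on $x_i=x_j$, and these poles cancel under symmetrization. Since the expression has degree $0$, it is a constant.

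To evaluate the constant, specialize $x_i=t^{-i}$, or equivalently take the limit $x_1\gg x_2\gg\cdots\gg x_n$. In this limit each term $\sigma(\cdot)$ tends to $t^{\ell(\sigma)}$, or to $t^{\ell(\sigma^{-1})}$ depending on convention. Summing over $\sigma$ gives $\prod_{i=1}^n(1-t^i)/(1-t)$.

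Alternatively, one can induct on $n$ by grouping permutations according to $\sigma^{-1}(n)$. This reduces the problem to the one-variable partial-fraction identity $\sum_k\prod_{j\ne k}\frac{x_j-tx_k}{x_j-x_k}=\frac{1-t^n}{1-t}$, which can be checked by a residue argument.

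\textbf{Main obstacle.} The only delicate point is the limiting argument. One must check that, after symmetrization, the sum is a genuine polynomial (hence constant) and not merely a rational function. This follows from symmetry: a symmetric rational function whose denominator divides the Vandermonde $\prod_{i<j}(x_i-x_j)$ must be a polynomial, because its numerator would be antisymmetric and hence divisible by the Vandermonde.
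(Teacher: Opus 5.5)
The paper does not prove this proposition; it imports it from \cite[Proposition 4.2]{Zhou-AFLT}. Your argument is a correct and complete proof, and it is the standard one. You average over $\mathfrak S_n$ using the symmetry of $f$. You factor $\prod_{i<j}(x_i/x_j)_c(qx_j/x_i)_c=\prod_{i\neq j}(x_i/x_j)_c\prod_{i<j}\frac{x_i-q^cx_j}{x_i-x_j}$. You then apply Macdonald's identity $\sum_{\sigma\in\mathfrak S_n}\sigma\prod_{i<j}\frac{x_i-tx_j}{x_i-x_j}=\prod_{i=1}^n\frac{1-t^i}{1-t}$ \cite[III, (1.4)]{Mac95}. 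Your argument that the symmetrized sum is a polynomial (antisymmetric numerator divisible by the Vandermonde), hence a degree-zero constant, is sound.

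One small correction concerns the evaluation step. The specialization $x_i=t^{-i}$ is not term-by-term equivalent to the limit $x_1\gg\cdots\gg x_n$. At $x_i=t^{-i}$ one has $x_k-tx_{k+1}=0$ for every $k$. So the term for $\sigma$ vanishes whenever some $i<j$ has $\sigma(j)=\sigma(i)+1$. Only the reversal $\sigma(i)=n+1-i$ survives, and it evaluates to $\prod_{d=1}^{n-1}\bigl(\frac{1-t^{d+1}}{1-t^{d}}\bigr)^{n-d}=\prod_{i=1}^n\frac{1-t^i}{1-t}$.

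Your description ``each term tends to $t^{\ell(\sigma)}$'' is correct only for the limit, realized for instance by $x_i=\epsilon^i$ with $\epsilon\to 0$. Since the symmetrized sum is constant, either evaluation yields the right value.
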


\begin{prop}\cite[Lemma 9]{baratta}\label{prop-preliminary-antisymmetric}
    Let $J=\{r,\ldots,r+s\}\subseteq \{1,\ldots,n\}$ and $h(x)$ be a Laurent polynomial that is antisymmetric with respect to $\{x_j,j\in J\}$. Then, for $a\in\mathbb{F}\setminus\{0\}$, 
    \[
    \CT_x \left( \prod_{r\leq i<j\leq r+s}(x_i-ax_j)h(x)\right)=\cfrac{[s+1]_a!}{(s+1)!}\CT_x \left( \prod_{r\leq i<j\leq r+s}(x_i-x_j)h(x)\right),
    \]
    where $[s+1]_a!=[1]_a\cdots [s+1]_a:=\cfrac{(a;a)_{s+1}}{(1-a)^{s+1}}$.
\end{prop}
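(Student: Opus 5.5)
The plan is to symmetrize over the permutations of the variables $\{x_j : j\in J\}$ and reduce the identity to a statement about the antisymmetrization of the deformed Vandermonde product
\[
\Delta_a(x_J):=\prod_{r\leq i<j\leq r+s}(x_i-ax_j).
\]
The two facts used are that the constant term of a Laurent polynomial is unchanged by permuting its variables, and that an antisymmetric polynomial of degree $\binom{s+1}{2}$ in $x_J$ is a scalar multiple of the ordinary Vandermonde $\Delta(x_J)=\prod_{r\leq i<j\leq r+s}(x_i-x_j)$.

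\emph{Averaging.} Let $\mathfrak{S}_J$ be the permutations of $J$, acting on the variables $x_j$, $j\in J$. For $\sigma\in\mathfrak{S}_J$ we have
\[
\CT_x\big(\Delta_a h\big)=\CT_x\big(\sigma(\Delta_a)\,\sigma(h)\big)=\operatorname{sgn}(\sigma)\,\CT_x\big(\sigma(\Delta_a)\,h\big),
\]
using the antisymmetry of $h$. Averaging over the $(s+1)!$ elements of $\mathfrak{S}_J$ gives
\[
\CT_x\big(\Delta_a h\big)=\frac{1}{(s+1)!}\,\CT_x\Big(h\cdot \mathcal{A}\Delta_a\Big),\qquad \mathcal{A}\Delta_a:=\sum_{\sigma\in\mathfrak{S}_J}\operatorname{sgn}(\sigma)\,\sigma(\Delta_a).
\]
The polynomial $\mathcal{A}\Delta_a$ is antisymmetric in $x_J$ and homogeneous of degree $\binom{s+1}{2}$. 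It is therefore divisible by $\Delta(x_J)$, which has the same degree, so $\mathcal{A}\Delta_a=K_a\,\Delta(x_J)$ for some scalar $K_a\in\mathbb{F}$. Substituting this back yields
\[
\CT_x\big(\Delta_a h\big)=\frac{K_a}{(s+1)!}\,\CT_x\big(\Delta(x_J)\,h\big),
\]
so the whole proposition reduces to showing $K_a=[s+1]_a!$.

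\emph{Computing $K_a$.} I would compare the coefficient of the staircase monomial $x^\delta=x_r^{s}x_{r+1}^{s-1}\cdots x_{r+s}^0$ on both sides. In $\Delta(x_J)$ this coefficient is $1$. On the left it equals
\[
\sum_{\sigma}\operatorname{sgn}(\sigma)\,\big[x^{\sigma^{-1}\delta}\big]\Delta_a,
\]
the signed sum of the coefficients in $\Delta_a$ of the monomials whose exponent vectors are rearrangements of $\delta$.

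Expanding $\Delta_a$ means choosing, for each pair $i<j$, either $x_i$ or $-ax_j$, which amounts to orienting each pair as a tournament on $J$. An exponent vector that is a permutation of $\delta$ has all out-degrees distinct, which forces the tournament to be transitive. Such a tournament is determined by a linear order $w$, and it is produced by exactly one choice of factors. That term contributes $(-a)^{\mathrm{inv}(w)}$, and its sign $(-1)^{\mathrm{inv}(w)}$ cancels against $\operatorname{sgn}(w)$. Hence the coefficient of $x^\delta$ in $\mathcal{A}\Delta_a$ is
\[
\sum_{w\in\mathfrak{S}_{s+1}}a^{\mathrm{inv}(w)}=[s+1]_a!,
\]
by the standard inversion generating function. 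This gives $K_a=[s+1]_a!$ and completes the argument. As a sanity check, at $a=1$ the formula gives $K_1=(s+1)!$, as it must.

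The main obstacle is the bookkeeping in this coefficient extraction: checking that the exponent vectors which are permutations of $\delta$ come only from transitive tournaments, and tracking the signs correctly. Everything else is formal.
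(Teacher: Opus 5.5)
Your proof is correct. The paper gives no argument of its own for this statement; it quotes it from Baratta's Lemma 9, so there is no in-paper route to compare against. Your argument is self-contained and complete:
\begin{itemize}
\item Averaging over $\mathfrak{S}_J$ is legitimate, since $\Delta_a h$ is a genuine Laurent polynomial and so $\CT_x$ is invariant under permuting $x_J$.
\item The degree and divisibility argument correctly gives $\mathcal{A}\Delta_a=K_a\,\Delta(x_J)$ with $K_a$ a scalar.
\item The tournament count correctly identifies $K_a$. A rearrangement $e$ of the staircase $(s,\dots,1,0)$ is the score vector of exactly one tournament, a transitive one. The number of factors $-ax_j$ chosen equals $\#\{i<j: e_i<e_j\}$, which is also the inversion number of the permutation carrying the staircase to $e$. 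So every term contributes $a^{\mathrm{inv}(w)}$, and summing gives $[s+1]_a!$.
\end{itemize}

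The identity you derive, $\mathcal{A}\Delta_a=[s+1]_a!\,\Delta(x_J)$, is the classical Poincar\'e-polynomial identity $\sum_{w}w\prod_{i<j}\frac{x_i-ax_j}{x_i-x_j}=\prod_{k=1}^{s+1}\frac{1-a^k}{1-a}$. It can also be read off from a formula the paper already records in its subsection on Macdonald polynomials with prescribed symmetry:
\begin{itemize}
\item Put $f=1$ in $\mathcal{S}_tf=t^{n(n-1)/2}\mathcal{A}(\Delta_{t^{-1}}f)/\Delta$.
\item Since $T_i1=t$, the left side is $\mathcal{S}_t1=\sum_{\sigma}t^{\ell(\sigma)}=[n]_t!$.
\item On the right, $t^{n(n-1)/2}\Delta_{t^{-1}}(X_n)=\prod_{i<j}(x_i-tx_j)$, which is exactly $\Delta_a$ with $a=t$.
\end{itemize}
That shortcut relies on the quoted Marshall formula, whereas your staircase-coefficient computation proves the needed constant from scratch.
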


\subsection{Polynomiality and Rationality}
A crucial step in the proof of Theorem~\ref{main-thm} is to treat constant terms as polynomials or rational functions. We first present the polynomiality result. We omit its proof as it is almost the same as that of~\cite[Lemma 2.2]{XZ}.

\begin{lem}[Polynomiality]\label{lem-preliminary-ct-polynomiality}
Let $L(x_1,\dots,x_n)$ be an arbitrary homogeneous Laurent polynomial that is independent of $a$ (resp. $b$) and $x_0$ with degree $-\tau$. For a nonnegative integer $b$  (resp. $a$) such that $nb-\tau\geq 0$ (resp. $na+\tau\geq 0$), the constant term 
\begin{equation}\label{eq-preliminary-ct-polynomiality}
\CT_x x_0^{\tau} L(x_1,\dots,x_n) \prod_{i=1}^n (x_0/x_i)_a(qx_i/x_0)_b
\end{equation}
is a polynomial in $q^a$ (resp. $q^b$) of degree at most $nb-\tau$ (resp. $na+\tau$). Note that when $nb-\tau<0$ (resp. $na+\tau<0$), the constant term above trivially vanishes by considering the degree of $x_0$. 
\end{lem}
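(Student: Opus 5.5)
The statement to prove is the Polynomiality Lemma~\ref{lem-preliminary-ct-polynomiality}. I would expand the product $\prod_{i=1}^n (x_0/x_i)_a(qx_i/x_0)_b$ by the $q$-binomial theorem~\eqref{e-qbinomialn}, treating the index with the fixed nonnegative exponent as a finite sum and the other as a formal (possibly infinite) sum, and then read off the constant term coefficientwise. I treat the case $b$ fixed, $a$ varying; the other case follows by the symmetric argument, or by the substitution $x_i\mapsto x_i^{-1}$, $x_0\mapsto q x_0^{-1}$, which swaps the roles of $a$ and $b$ and sends $\tau\mapsto -\tau$.

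Since $L$ is homogeneous of degree $-\tau$ and the whole integrand must have total degree zero, we may set $x_0=1$. Expand
\[
(qx_i)_b=\sum_{k_i=0}^{b} q^{\binom{k_i}{2}}\qbinom{b}{k_i}(-q x_i)^{k_i}
\quad\text{and}\quad
(1/x_i)_a=\sum_{l_i\ge 0} q^{\binom{l_i}{2}}\qbinom{a}{l_i}(-x_i^{-1})^{l_i}.
\]
Write $L=\sum_\beta c_\beta x^\beta$, a finite sum with $|\beta|=-\tau$. The constant term is then
\[
\sum_{\beta}c_\beta\sum_{k} \prod_i q^{\binom{k_i}{2}+k_i}\qbinom{b}{k_i}(-1)^{k_i}\cdot q^{\binom{l_i}{2}}\qbinom{a}{l_i}(-1)^{l_i},
\]
where $l_i=\beta_i+k_i$ is forced, and the term is kept only if every $l_i\ge0$. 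This is a finite sum over $0\le k_i\le b$.

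The key observation is that $\qbinom{a}{l}=(q^{a-l+1})_l/(q)_l$ is a polynomial in $q^a$ of degree $l$. It has no extra $a$-dependence, and it remains valid for all nonnegative integers $a$, including $a<l$, where it correctly vanishes. So each term is a polynomial in $q^a$ of degree $\sum_i l_i=|\beta|+\sum_i k_i=-\tau+\sum_i k_i\le nb-\tau$, with coefficients independent of $a$. Summing finitely many such terms gives the claim. If $nb-\tau<0$, no admissible $k$ exists, so the constant term vanishes, which matches the remark in the statement.

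The only delicate point, which I expect to be the main obstacle, is making sure the identification "constant term $=$ this polynomial evaluated at $q^a$" holds for every nonnegative integer $a$ simultaneously. This requires the expansion of $(1/x_i)_a$ to be exact for each integer $a\ge0$, with no iterated-Laurent-series subtleties. That holds because $(1/x_i)_a$ is a genuine Laurent polynomial for $a\ge0$ and the $q$-binomial sum terminates automatically. A polynomial in $q^a$ agreeing at all these points is therefore the unique such polynomial.
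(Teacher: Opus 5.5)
For nonnegative $a$ your argument is correct. The paper omits the proof and defers to \cite[Lemma 2.2]{XZ}; your argument is the standard one: the $q$-binomial expansion, the forced relation $l_i=\beta_i+k_i$, and the degree count $\sum_i l_i=\sum_i k_i-\tau\le nb-\tau$, using that $\qbinom{a}{l}=(q^{a-l+1})_l/(q)_l$ has degree $l$ in $q^a$. Your reflection $x_i\mapsto x_i^{-1}$, $x_0\mapsto qx_0^{-1}$ for the $q^b$-version is also fine, and there the paper only ever needs nonnegative $b$.

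The gap is that you deliberately restrict to $a\ge 0$, while the paper uses the $q^a$-version at negative $a$. The roots $-a\in B_1\cup B_2\cup B_3$, the extra point $-a=(n-m)c+b+1$, and the interpolation set $\mathcal{S}$ in Lemma~\ref{lem-preliminary-a,b,c} all correspond to negative values of $a$. At those values the constant term is, by definition, that of a rational function expanded in $\mathbb{F}\langle\!\langle x_n,\dots,x_0\rangle\!\rangle$. This is how $\CT_x Q(d)$ and $\CT_x P(d)$ are computed, and it is why \eqref{e-qbinomialn} is recorded for all integers $n$. What must be shown is that this Laurent-series constant term equals the same polynomial in $q^a$ for every integer $a$. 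Agreement at $a\ge 0$ alone says nothing about the differently defined object at $a<0$.

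Your reduction to $x_0=1$ actually fails in that case. For $a<0$ the factor $1/(1-q^{a+j}x_0/x_i)$ is expanded in powers of $x_0/x_i$, because $x_0\ll x_i$. After $x_0\mapsto 1$ it is instead expanded in powers of $x_i$, because $x_i\ll 1$. For example, take $n=1$, $L=1$, $\tau=b=0$ and $a=-1$. Then $\CT_{x_1}1/(1-q^{-1}x_0/x_1)=1$, which is the value of the (constant) polynomial, but $\CT_{x_1}1/(1-q^{-1}x_1^{-1})=0$.

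The repair is short. Keep $x_0$ and use \eqref{e-qbinomialn} in the form $(x_0/x_i)_a=\sum_{l\ge 0}q^{\binom{l}{2}}\qbinom{a}{l}(-x_0/x_i)^l$. This is the correct expansion in this field for every $a\in\mathbb{Z}$, and it is an infinite series when $a<0$. Extracting the coefficient of $x_1^0\cdots x_n^0$ still forces $l_i=\beta_i+k_i$, so only finitely many terms survive. Each surviving term has $x_0$-exponent $\tau+|\beta|=0$. You obtain literally the same finite formula, now valid for all integers $a$.
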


The following rationality result is implicitly due to Stembridge \cite{stembridge1987}, as it can be seen from the proof. One can also see this result in \cite[Proposition~3.1]{XZ} and \cite[Lemma~7.5]{KNPV}. 
\begin{prop}[Rationality]\label{prop-preliminary-stembridge-rationality}
Let $\alpha=(\alpha_1,\dots,\alpha_n)\in \mathbb{Z}^{n}$ be such that
$|\alpha|=0$. Then, for a nonnegative integer $c$, 
\begin{align}
\CT_x x_1^{\alpha_1}\cdots  x_n^{\alpha_n}\prod_{1\leq i<j\leq n}
(x_i/x_j)_c(qx_j/x_i)_c=\frac{(q)_{nc}}{(q)_{c}^{n}}\cdot
R_n(q^c,q;\alpha),
\end{align}
where $R_n(q^c,q;\alpha)$ is a rational function in $q^c$ and $q$.
\end{prop}

By the next lemma, it suffices to prove Theorem~\ref{main-thm} under the restriction that $c\geq b+\mu_1$ and $b\geq |\lambda|+|\xi|+m$.
\begin{lem}\label{lem-preliminary-a,b,c}
    Let $n,k,l$ be integers, and let $C_1(a,b,c)$ and $C_2(a,b,c)$ be two expressions in the integer parameters $a,b,c$, such that
    \begin{enumerate}[(a)]
        \item for fixed nonnegative $b$ and $c$ such that $b\geq l$,  both $C_1$ and $C_2$ are polynomials in $q^a$ of degree at most $nb+k$;
        \item for fixed nonnegative $a$ and $c$, both $C_1$ and $C_2$ are rational functions in $q^b$;
        \item for fixed nonnegative $a$ and $b$, both $C_1$ and $C_2$ are rational functions in $q^c$.
    \end{enumerate}
    Let $t_1,t_2$ be two nonnegative integers that are independent of $a,b,c$, and $\mathcal{S}\subset \mathbb{Z}$ be a set of $nb+k+1$ distinct integers (elements in $\mathcal{S}$ might depend on $b$ and $c$, but are independent of $a$). If 
    \begin{equation}\label{eq-preliminary-abc-C}
    C_1(a,b,c)=C_2(a,b,c)
    \end{equation}
    for all triples $a,b,c$ such that $c\geq b+t_1, b\geq \max\{t_2,l\}, a\in\mathcal{S}$, then it holds for all nonnegative integers $a,b,c$.
\end{lem}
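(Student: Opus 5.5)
We must prove Lemma~\ref{lem-preliminary-a,b,c}. The plan is to relax one parameter at a time, in the order $a$, then $b$, then $c$. At each stage we use the elementary fact that a polynomial, or a rational function, in one variable that vanishes at infinitely many (or sufficiently many) points is identically zero. Throughout, set $D(a,b,c):=C_1(a,b,c)-C_2(a,b,c)$.

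\emph{Step 1: removing the restriction on $a$.} Fix $b,c$ with $c\geq b+t_1$ and $b\geq\max\{t_2,l\}$. In particular $b\geq l$ and $b,c\geq 0$, so hypothesis (a) applies: $D$ is a polynomial in $q^a$ of degree at most $nb+k$. By assumption $D$ vanishes for every $a\in\mathcal{S}$. Since $\mathcal{S}$ consists of $nb+k+1$ distinct integers, the values $q^a$ ($a\in\mathcal{S}$) are pairwise distinct, because $q$ is transcendental over $\mathbb{Q}$. A polynomial of degree at most $nb+k$ with $nb+k+1$ distinct roots is zero. Hence $D(a,b,c)=0$ for all integers $a$, and in particular for all $a\geq 0$, whenever $c\geq b+t_1$ and $b\geq \max\{t_2,l\}$.

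\emph{Step 2: removing the restriction on $b$.} Fix $a\geq 0$ and $c\geq 0$. By (b), $D$ is a rational function of $q^b$ for nonnegative $b$. By Step~1, $D$ vanishes for every $b$ with $\max\{t_2,l\}\leq b\leq c-t_1$. The difficulty is that for a fixed $c$ this is only a finite set of $b$, so we cannot conclude yet; the key point is to use the ordering of the quantifiers in the right way.

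\emph{Step 3: removing the restriction on $c$, then on $b$.} First fix $a\geq0$ and $b\geq\max\{t_2,l\}$. By Step~1, $D=0$ for all $c\geq b+t_1$, which is infinitely many values of $q^c$. Since $D$ is a rational function of $q^c$ by (c), it vanishes identically, so $D(a,b,c)=0$ for all $c\geq0$. Now fix $a,c\geq0$ arbitrary. We have $D=0$ for all $b\geq\max\{t_2,l\}$, which is infinitely many values of $q^b$. By (b), $D$ is a rational function of $q^b$, so it vanishes for all $b\geq 0$.

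The main obstacle is exactly this ordering issue: the $b$-relaxation cannot be done before the $c$-relaxation, because the hypothesis $c\geq b+t_1$ couples the two parameters. Relaxing $c$ first (with $b$ large but fixed) decouples them. One must also check that the interpretation of ``rational function agreeing with the expression'' is meant on all nonnegative integers, so that vanishing of the rational function yields vanishing of $D$ at every admissible point. We must also ensure the rational functions in (b) and (c) do not have poles at the evaluation points, which is implicit in their being equal to $C_1,C_2$ there.
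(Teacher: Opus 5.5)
Your proof is correct and matches the paper's argument: polynomial interpolation in $q^a$ on the restricted range $c\geq b+t_1$, $b\geq\max\{t_2,l\}$, then relaxing $c$ via rationality in $q^c$ with $b$ held fixed, and finally relaxing $b$ via rationality in $q^b$. Your Step 2 leads nowhere and you rightly abandon it, so you can simply delete it.
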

\begin{proof}
For fixed nonnegative integers $b$ and $c$ such that $c\geq b+t_1, b\geq \max\{t_2,l\}$, both $C_1(a,b,c)$ and $C_2(a,b,c)$ are polynomials in $q^a$ of degree at most $nb+k$. Since they coincide at $nb+k+1$ distinct points, they are identical as polynomials. Hence the equality~\eqref{eq-preliminary-abc-C} holds for all triples $(a,b,c)$ such that $a\in\mathbb{N}, b\geq \max\{t_2,l\}, c\geq b+t_1$.

Now, for any fixed $a\in\mathbb{N}$ and $b\geq \max\{t_2,l\}$, both $C_1(a,b,c)$ and $C_2(a,b,c)$ are rational functions in $q^c$. They agree for infinitely many values $c\geq b+t_1$. Hence, they are identical as rational functions in $q^c$. That is, the equality~\eqref{eq-preliminary-abc-C} holds for all triples $(a,b,c)$ such that $a,c\in\mathbb{N}$, $b\geq \max\{t_2,l\}$.

Finally, for any fixed $a,c\in\mathbb{N}$, both $C_1(a,b,c)$ and $C_2(a,b,c)$ are rational functions in $q^b$. By the same argument as above, the equality \eqref{eq-preliminary-abc-C} holds for all triples $(a,b,c)$ such that $a,b,c\in\mathbb{N}$.
\end{proof}

%% file: 3.Twofamilies.tex
\section{Two families of constant terms}\label{section-two families}
In this section we introduce two families of constant terms, which are intimately related to the constant term $F_{n,m}(a,b,c,\lambda,\xi,\mu)$. Their polynomiality, rationality and vanishing properties lead to Step (1) and Step (2) in the introduction.

\subsection{The first family} 
For a nonnegative integer $\tau$, let
$H_\tau=H_\tau(x)$ be a homogeneous Laurent polynomial of the form
\begin{equation}\label{def-intro-H_tau}
H_\tau(x)=\sum_{\substack{\alpha\in\mathbb{N}^n\\ |\alpha|=\tau}} c_\alpha x_1^{-\alpha_1}\cdots x_n^{-\alpha_n},\quad c_\alpha\in\mathbb{F}.
\end{equation}
For nonnegative integers $a,b,c$ and a partition $\mu$ such that $\ell(\mu)<m$, define
\begin{multline}\label{def-intro-F(mu,H)}
    F_{n,m}(a,b,c,H_\tau,\mu):=\CT_x x_0^{\tau-|\mu|} H_\tau F_{n,m}(x;a,b,c)\\
    \times P_\mu\left(\left[\cfrac{q^{c-b}-q^a}{1-q^{c+1}}x_0+X_m +\cfrac{q-q^{c+1}}{1-q^{c+1}}X_{n\setminus m}\right];q,q^{c+1}\right).
\end{multline}
The constant term $F_{n,m}(a,b,c,\lambda,\xi,\mu)$ is then a special case of $F_{n,m}(a,b,c,H_\tau,\mu)$ by setting $\tau=|\lambda|+|\xi|$ and 
\begin{equation}\label{eq-sec2-specific-H-tau}
H_\tau=P_\lambda\left(\left[X_m^{-1}+\cfrac{1-q^c}{1-q^{c+1}}X_{n\setminus m}^{-1}\right];q,q^{c+1} \right)P_\xi\left( X_{n\setminus m}^{-1};q^{c+1},q^c\right).
\end{equation}

The constant term $F_{n,m}(a,b,c,H_\tau,\mu)$ has the properties of polynomiality, rationality, and vanishing.
\begin{customthm}{3.1A}[Polynomiality and rationality]\label{proposition-main-2-1}
    Let $F_{n,m}(a,b,c,H_\tau,\mu)$ be defined in~\eqref{def-intro-F(mu,H)}. If $H_\tau$ is independent of $a$, $b$ and $x_0$, then 
    \begin{enumerate}[(a)]
        \item for fixed nonnegative integers $b$ and $c$, the constant term $F_{n,m}(a,b,c,H_\tau,\mu)$ is a polynomial in $q^a$ of degree at most $nb+|\mu|-\tau$;
        \item for fixed nonnegative integers $a$ and $c$, the constant term $F_{n,m}(a,b,c,H_\tau,\mu)$ is a rational function in $q^b$ ;
        \item for fixed nonnegative integers $a$ and $b$, the expression
        \[
        (q)_c^n/(q)_{nc} \cdot F_{n,m}(a,b,c,H_\tau,\mu)
        \]
        is a rational function in $q^c$.
    \end{enumerate}
\end{customthm}

\begin{customthm}{3.1B}[Vanishing property]\label{proposition-main-2-2}
    If $b$ and $c$ are fixed nonnegative integers such that $c\geq b+\mu_1$ and $b\geq \tau+m$, then as a polynomial in $q^a$,
    \[
    F_{n,m}(a,b,c,H_\tau,\mu)=0 \quad \text{ for $-a\in B_2(\delta^m)\cup B_3(\mu)$}.
    \]
    Here the sets $B_2$ and $B_3$ are defined in~\eqref{def-sets-B2} and ~\eqref{def-sets-B3}, respectively.
\end{customthm}

The proofs of Theorem~\ref{proposition-main-2-1} and Theorem~\ref{proposition-main-2-2} will be given in Section~\ref{sec-proof2}. Consequently, if $H_\tau$ is chosen as in~\eqref{eq-sec2-specific-H-tau}, then Theorem~\ref{proposition-main-2-1} gives the polynomiality and rationality of $F_{n,m}(a,b,c,\lambda,\xi,\mu)$. This establishes Step (1) in the introduction. Moreover, under the assumption that $c\geq b+\mu_1$ and $b\geq |\lambda|+|\xi|+m$, Theorem~\ref{proposition-main-2-2} implies that 
\[
F_{n,m}(a,b,c,\lambda,\xi,\mu)=0 \quad \text{ for $-a\in B_2(\delta^m)\cup B_3(\mu)$}.
\]
This establishes part of Step (2) in the introduction. To complete Step (2), it remains to verify that $F_{n,m}(a,b,c,\lambda,\xi,\mu)=0$ for $-a\in B_1((\lambda+\delta^m,\xi)^+)$. 

\subsection{The second family}
For nonnegative integers $a,b,c$ and compositions $\alpha=(\alpha_1,\ldots,\alpha_n)$ and $\nu=(\nu_1,\ldots,\nu_n)$, define
\begin{multline}\label{definition-A_n(a,b,c,eta,M_tau)}
    M_n(a,b,c,\nu,\alpha):=\CT_x x_0^{|\nu|-|\alpha|} x_1^{\alpha_1}\cdots x_n^{\alpha_n} E_\nu(x^{-1};q^{-1},q^{-c})\\ \times \prod_{i=1}^{n}(x_0/x_i)_a(qx_i/x_0)_b
    \prod_{1\leq i<j\leq n}(x_i/x_j)_c(qx_j/x_i)_c.
\end{multline}
The constant term $M_n(a,b,c,\nu,\alpha)$ also has the properties of polynomiality and vanishing.
\begin{customthm}{3.2}\label{Proposition-main}
For fixed nonnegative integers $b$ and $c$, the constant term $M_n(a,b,c,\nu,\alpha)$ is a polynomial in $q^a$ of degree at most $nb-|\nu|+|\alpha|$. Furthermore, if $c\geq b \geq \nu^+_1$, then
\[
M_n(a,b,c,\nu,\alpha)=0 \quad \text{ for $-a\in B_1(\nu^+)$}.
\]
Here the set $B_1$ is defined in~\eqref{def-sets-B1}, with $\eta$ replaced by $\nu$.
\end{customthm}

Note that the polynomiality of $M_n(a,b,c,\nu,\alpha)$ follows immediately from Lemma~\ref{lem-preliminary-ct-polynomiality}, while the vanishing property described in Theorem~\ref{Proposition-main} will be proved in Section~\ref{sec-proof1}. For $\alpha=\mathbf{0}$ (the zero composition) and $c\geq b\geq \nu_1^+$, as a polynomial in $q^a$, all the roots of $M_n(a,b,c,\nu,\alpha)$ have been identified.

Using Remark~\ref{rem-nonmac} and Theorem~\ref{thm-nonmac-factorization}, under the same conditions on $\lambda$ and $\xi$ as in Theorem~\ref{main-thm}, we can express $F_{n,m}(a,b,c,\lambda,\xi,\mu)$ as a finite sum of certain $M_n(a,b,c,\nu,\alpha)$. To see this, first rewrite $F_{n,m}(a,b,c,\lambda,\xi,\mu)$ as
\begin{equation}\label{eq-F-to-M_n}
    \CT_x x_0^{|\lambda|+|\xi|-|\mu|} S_1\times S_2 \times \prod_{i=1}^n (x_0/x_i)_a (qx_i/x_0)_b \prod_{1\leq i<j \leq n} (x_i/x_j)_c (qx_j/x_i)_c,
\end{equation}
where
\[
S_1=\prod_{1\leq i<j\leq m}(q^{-c} x_i^{-1}-x_j^{-1})P_\lambda\left(\left[X_m^{-1}+\cfrac{1-q^c}{1-q^{c+1}}X_{n\setminus m}^{-1}\right];q,q^{c+1} \right) P_\xi\left(X_{n\setminus m}^{-1};q^{c+1},q^{c}\right)
\]
and
\[
S_2=\prod_{1\leq i<j\leq m}q^c(x_i-q^{c+1}x_j) P_\mu\left(\left[\cfrac{q^{c-b}-q^a}{1-q^{c+1}}x_0+X_m+\cfrac{q-q^{c+1}}{1-q^{c+1}}X_{n\setminus m}\right]; q,q^{c+1}\right).
\]
By~\eqref{eq-mac-qt-inverse}, we have
\[
S_1=\prod_{1\leq i<j\leq m}(q^{-c} x_i^{-1}-x_j^{-1})P_\lambda\left(\left[X_m^{-1}+\cfrac{1-q^c}{1-q^{c+1}}X_{n\setminus m}^{-1}\right];q^{-1},q^{-c-1} \right) P_\xi\left(X_{n\setminus m}^{-1};q^{-c-1},q^{-c}\right).
\]
By Remark~\ref{rem-nonmac} and Theorem~\ref{thm-nonmac-factorization} with $x\mapsto x^{-1},q\mapsto q^{-1}$, we know that $S_1$ is a linear combination of the nonsymmetric Macdonald polynomials $E_\nu(x^{-1};q^{-1},q^{-c})$, where $\nu$ ranges only over the rearrangements of $(\lambda+\delta^m,\xi)$. By replacing $S_1$ with this linear combination and expanding $S_2$ into monomials in~\eqref{eq-F-to-M_n}, we obtain
\begin{equation}\label{eq-sec2-F-to-M_n}
    F_{n,m}(a,b,c,\lambda,\xi,\mu)
    =\sum_{\nu^+=(\lambda+\delta^m,\xi)^+}\sum_{\substack{\alpha\in\mathbb{N}^n\\ |\alpha|\leq |\mu|+\binom{m}{2}}} t_\alpha k_\nu  M_n(a,b,c,\nu,\alpha),
\end{equation}
where $t_\alpha,k_\nu\in\mathbb{F}$. By Theorem~\ref{Proposition-main}, if $c\geq b\geq \lambda_1+m$ and $-a\in B_1(\nu^+)=B_1((\lambda+\delta^m,\xi)^+)$, then every such $M_n(a,b,c,\nu,\alpha)$ in~\eqref{eq-sec2-F-to-M_n} vanishes. It follows that if $c\geq b\geq \lambda_1+m$, then
\[
F_{n,m}(a,b,c,\lambda,\xi,\mu)=0 \quad \text{ for $-a\in B_1((\lambda+\delta^m,\xi)^+)$}.
\]
This completes the remaining part of Step (2) in the introduction.

In conclusion, to establish Step (1) and Step (2) in the introduction, it suffices to prove Theorem~\ref{proposition-main-2-1}, Theorem~\ref{proposition-main-2-2}, Theorem~\ref{Proposition-main}, and the case (1) of Theorem~\ref{thm-nonmac-factorization}.

%% file: 4.Somediscussion.tex
\section{Preparations for Theorem~\ref{proposition-main-2-2} and Theorem~\ref{Proposition-main}}\label{sec-discussion}
In this section we provide some essential results for proving Theorem~\ref{proposition-main-2-2} and Theorem~\ref{Proposition-main}. 

\subsection{Distribution of integers}
The next lemma is similar to~\cite[Lemma 6.4]{Zhou-tran}. Both lemmas describe certain distributions of integers.  Here we prove the lemma using only inequalities, while the proof of~\cite[Lemma 6.4]{Zhou-tran} is based on a weighted tournament on a complete graph. 

\begin{lem}\label{lem-preliminary-key}
For positive integers $s$ and $c$, let $b,e$ and $k_1,\dots,k_s$ be nonnegative integers such that
$1\leq k_{i}\leq (s-1)c+b+e$ for $i=1,\ldots,s$.
Then for an integer $r$ such that $0\leq r\leq s$, at least one of the following holds:
\begin{enumerate}
\item[(i)] $1\leq k_i\leq b$ for some $i$ with $1\leq i\leq s$;
\item[(ii)] $-c-1\leq k_i-k_j\leq c$ for some $(i,j)$ such that $1\leq i<j\leq r$;
\item[(iii)] $-c\leq k_{i}-k_{j}\leq c-1$ for some $(i,j)$ such that $1\leq i<j\leq s$ and $j>r$;
\item[(iv)] there exists a permutation $\omega\in\mathfrak{S}_s$ and nonnegative integers $d_1,\dots,d_s$
such that
\begin{subequations}\label{eq-preliminary-keylemma-1}
\begin{equation}\label{eq-preliminary-keylemma-1-1}
k_{\omega(1)}=b+d_1,
\end{equation}
and
\begin{equation}\label{eq-preliminary-keylemma-1-2}
k_{\omega(j)}-k_{\omega(j-1)}=c+d_j+\chi(1\leq \omega(j)\leq r)\chi(1\leq \omega(j-1)\leq r) \quad \text{for $2\leq j\leq s$.}
\end{equation}
\end{subequations}
Here the $d_j$ satisfy
\begin{equation}\label{eq-preliminary-keylemma-1-t}
r\leq \sum_{j=1}^{s}\big(d_j+\chi(1\leq \omega(j)\leq r)\chi(1\leq \omega(j-1)\leq r)\big)\leq e,
\end{equation}
and $d_j>0$ if $\omega(j-1)<\omega(j)$ for $1\leq j\leq s$, where $\omega(0):=0$.
\end{enumerate}
\end{lem}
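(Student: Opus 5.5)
We prove the lemma by assuming that (i), (ii) and (iii) all fail and constructing the data in (iv) directly. The permutation $\omega$ will simply list the indices in increasing order of the values $k_i$, and the $d_j$ will be read off from the consecutive gaps. Most of the conditions in (iv) then follow from local comparisons of neighbouring values; only the lower bound $r$ in \eqref{eq-preliminary-keylemma-1-t} needs a global counting argument.

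\emph{Step 1: the order $\omega$.} Assume (i)--(iii) fail. Since (i) fails, every $k_i\geq b+1$. The $k_i$ are pairwise distinct: if $k_i=k_j$ with $i<j$, then $0$ lies in $[-c-1,c]$ or in $[-c,c-1]$, so (ii) holds when $j\le r$ and (iii) holds when $j>r$. Let $\omega\in\mathfrak{S}_s$ be the permutation with $k_{\omega(1)}<\cdots<k_{\omega(s)}$. Put $d_1:=k_{\omega(1)}-b\geq 1$, and for $2\le j\le s$ define $d_j$ by \eqref{eq-preliminary-keylemma-1-2}. Writing $\chi_j:=\chi(1\le\omega(j)\le r)\chi(1\le\omega(j-1)\le r)$ with $\omega(0)=0$, we have $\chi_1=0$, so \eqref{eq-preliminary-keylemma-1-1} holds by the choice of $d_1$.

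\emph{Step 2: signs of the $d_j$.} Fix $2\le j\le s$ and set $u=\omega(j-1)$, $v=\omega(j)$, so that $k_v-k_u>0$.
\begin{itemize}
\item If $u,v\le r$, apply the failure of (ii) to the pair $(\min(u,v),\max(u,v))$.
 If $u<v$, then $k_u-k_v\le -c-2$, so $k_v-k_u\ge c+2$ and $d_j\ge 1$.
 If $u>v$, then $k_v-k_u\ge c+1$, so $d_j\ge 0$.
\item Otherwise $\max(u,v)>r$, and we use the failure of (iii).
 If $u<v$, then $k_u-k_v\le -c-1$, giving $d_j\ge 1$.
 If $u>v$, then $k_v-k_u\ge c$, giving $d_j\ge0$.
\end{itemize}
Together with $d_1\ge1$, this shows that all $d_j\ge0$ and that $d_j>0$ whenever $\omega(j-1)<\omega(j)$.

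\emph{Step 3: the upper bound in \eqref{eq-preliminary-keylemma-1-t}.} Telescoping \eqref{eq-preliminary-keylemma-1-1} and \eqref{eq-preliminary-keylemma-1-2} gives
\[
k_{\omega(s)}=b+(s-1)c+\sum_{j=1}^s (d_j+\chi_j).
\]
The hypothesis $k_{\omega(s)}\le (s-1)c+b+e$ then yields $\sum_{j=1}^s(d_j+\chi_j)\le e$.

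\emph{Step 4: the lower bound $r$.} This is the main obstacle. We must exhibit at least $r$ indices $j$ with $d_j+\chi_j\ge1$. Call $j$ \emph{good} if $d_j+\chi_j\geq 1$; by Step 2, every $j$ with $\omega(j-1)<\omega(j)$ is good. For each $v\le r$, let $j_v:=\omega^{-1}(v)$ and $u:=\omega(j_v-1)$.
\begin{itemize}
\item If $u<v$ (this includes $j_v=1$, where $u=0$), then $j_v$ is good.
\item If $r\ge u>v$, then $\chi_{j_v}=1$, so $j_v$ is good.
\item The remaining case is $u>r$. Here $j_v$ may fail to be good, since only $d_{j_v}\ge0$ is guaranteed. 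Let $j'\le j_v-1$ be the first position of the maximal run of consecutive positions ending at $j_v-1$ on which $\omega$ takes values $>r$. Then $\omega(j'-1)\le r<\omega(j')$ (or $j'=1$ with $\omega(0)=0$), so $j'$ is an ascent and hence good. Assign $v\mapsto j'$.
\end{itemize}
This assignment is injective. Indices of the first two kinds are mapped to positions of low values ($\le r$). Indices of the third kind are mapped to positions of high values ($>r$), and distinct such $v$ are preceded by distinct maximal high runs, hence by distinct run starts $j'$. Therefore there are at least $r$ good indices, so $\sum_j(d_j+\chi_j)\ge r$. This completes (iv).
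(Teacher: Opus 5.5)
Your proof is correct and is essentially the paper's argument: sort the $k_i$ via $\omega$, read off the $d_j$ from consecutive gaps using the failure of (ii)/(iii), and telescope to get the upper bound. For the lower bound, your injection (assigning each low value $v\le r$ either to its own position $j_v$ or to the start of the high run preceding it) is the same count as the paper's, which splits the positions into the disjoint blocks $\{i_k+1,\ldots,i_{k+1}\}$ ending at consecutive low positions and finds a good index in each block, either at $i_{k+1}$ or at the ascent $i_k+1$.
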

\begin{proof}
    Assume that all of (i)-(iii) fail. We show that (iv) must hold. In this case, the $k_i$ are pairwise distinct. Hence, there exists a unique permutation $\omega\in\mathfrak{S}_s$, such that
    \[
    k_{\omega(1)}<k_{\omega(2)}<\cdots<k_{\omega(s)}.
    \]
    Since (i) fails, $k_{\omega(1)}\geq b+1$. Let $d_1:=k_{\omega(1)}-b\geq 1$, then~\eqref{eq-preliminary-keylemma-1-1} holds. Since both (ii) and (iii) fail, we have
    \[
    k_{\omega(j)}-k_{\omega(j-1)}\geq c+\chi(1\leq \omega(j)\leq r)\chi(1\leq \omega(j-1)\leq r)+\chi(\omega(j-1)<\omega(j))
    \]
    for $j=2,\ldots,s$. Let 
    \[
    d_j:=k_{\omega(j)}-k_{\omega(j-1)}-c-\chi(1\leq \omega(j)\leq r)\chi(1\leq \omega(j-1)\leq r).
    \]
    Then~\eqref{eq-preliminary-keylemma-1-2} holds, and the $d_j>0$ if $\omega(j-1)<\omega(j)$. In the following, we show that the $d_j$ satisfy~\eqref{eq-preliminary-keylemma-1-t}. Set $k_0:=0,\omega(0)=0$. Using~\eqref{eq-preliminary-keylemma-1-1} and~\eqref{eq-preliminary-keylemma-1-2}, we have
    \begin{multline*}
    (s-1)c+b+\sum_{j=1}^s \big(d_j+\chi(1\leq \omega(j)\leq r)\chi(1\leq \omega(j-1)\leq r)\big)\\
    =\sum_{j=1}^s (k_{\omega(j)}-k_{\omega(j-1)})
    =k_{\omega(s)}
    \leq (s-1)c+b+e,
    \end{multline*}
    Therefore,
    \[
    \sum_{j=1}^s \big(d_j+\chi(1\leq \omega(j)\leq r)\chi(1\leq \omega(j-1)\leq r)\big)\leq e.
    \]
    This establishes the second inequality of~\eqref{eq-preliminary-keylemma-1-t}.
    
    To obtain the other inequality, let $1\leq i_1<\ldots<i_r\leq s$ be integers such that $\{\omega(i_k):k=1,\ldots,r\}=\{1,\ldots,r\}$, and define $i_0:=0$.
    To show 
    $r\leq \sum_{j=1}^{s}\big(d_j+\chi(1\leq \omega(j)\leq r)\chi(1\leq \omega(j-1)\leq r)\big)$, it suffices to show that 
    \[
    \sum_{k=0}^{r-1} \sum_{j=i_k+1}^{i_{k+1}}\left(\chi(\omega(j-1)<\omega(j))+\chi(1\leq \omega(j)\leq r)\chi(1\leq \omega(j-1)\leq r)\right)\geq r.
    \]
    This can be obtained by showing that for every $k=0,\ldots,r-1$,
    \begin{equation}\label{eq-preliminaries-keylemma-222} \sum_{j=i_k+1}^{i_{k+1}}\left(\chi(\omega(j-1)<\omega(j))+\chi(1\leq \omega(j)\leq r)\chi(1\leq \omega(j-1)\leq r)\right)\geq 1.
    \end{equation}
    The case $k=0$ is straightforward. In this case $i_k+1=i_0+1=1$. For $j=1$, the first term in the above summand, we have
    \[
    \chi(\omega(j-1)<\omega(j))=\chi(\omega(0)<\omega(1))=1.
    \]
    Hence, ~\eqref{eq-preliminaries-keylemma-222} holds. In the case when $1 \leq k \leq r-1$, if $i_{k+1}=i_k+1$, for $j=i_{k+1}$, we have \[
    \chi(1\leq \omega(j)\leq r)\chi(1\leq \omega(j-1)\leq r)=\chi(1\leq \omega(i_{k+1})\leq r)\chi(1\leq \omega(i_{k})\leq r)=1.
    \]
    If $i_k<i_k+1<i_{k+1}$, then $\omega(i_k+1)>r\geq \omega(i_k)$ and for  $j=i_k+1$ we have
    \[
    \chi(\omega(j-1)<\omega(j))=\chi(\omega(i_k)<\omega(i_k+1))=1.
    \]
    Thus, ~\eqref{eq-preliminaries-keylemma-222} always holds.
    This completes the proof. 
\end{proof}

By taking $r=1$ in Lemma~\ref{lem-preliminary-key} we obtain the following corollary, which is essentially~\cite[Lemma 8.1]{Zhou-AFLT}.
\begin{cor}\label{cor-preliminary-key}
For positive integers $s$ and $c$, let $b,e$ and $k_1,\dots,k_s$ be nonnegative integers such that $1\leq k_{i}\leq (s-1)c+b+e$ for $i=1,\ldots,s$.
Then at least one of the following holds:
\begin{enumerate}
\item[(i)] $1\leq k_i\leq b$ for some $i$ with $1\leq i\leq s$;
\item[(ii)] $-c\leq k_i-k_j\leq c-1$ for some $(i,j)$ such that $1\leq i<j\leq s$;
\item[(iii)] there exists a permutation $\omega\in\mathfrak{S}_s$ and nonnegative integers $d_1,\dots,d_s$
such that
\begin{subequations}\label{eq-preliminary-keycor-1}
\begin{equation}\label{eq-preliminary-keycor-1-1}
k_{\omega(1)}=b+d_1,
\end{equation}
and
\begin{equation}\label{eq-preliminary-keycor-1-2}
k_{\omega(j)}-k_{\omega(j-1)}=c+d_j \quad \text{for $2\leq j\leq s$.}
\end{equation}
\end{subequations}
Here the $d_j$ satisfy
\begin{equation}\label{eq-preliminary-keycor-range-t}
1\leq \sum_{j=1}^{s}d_j\leq e,
\end{equation}
$\omega(0):=0$, and $d_j>0$ if $\omega(j-1)<\omega(j)$ for $1\leq j\leq s$.
In particular, if $e=1$ then
\begin{equation}\label{eq-preliminary-keycor-t-specialcase}
k_i=(s-i)c+b+1
\end{equation}
for $i=1,\dots,s$.
\end{enumerate}
\end{cor}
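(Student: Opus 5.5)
The plan is to specialize Lemma~\ref{lem-preliminary-key} to $r=1$ and check that each of its four alternatives becomes the corresponding alternative of the corollary. All the work is in simplifying the indicator factors $\chi(1\leq\omega(j)\leq 1)\chi(1\leq\omega(j-1)\leq 1)$. The hypotheses of the corollary are exactly those of the lemma, so for $r=1$ at least one of (i)--(iv) of Lemma~\ref{lem-preliminary-key} holds.

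First, alternative (i) of the lemma is literally alternative (i) of the corollary. Alternative (ii) of the lemma ranges over pairs $1\leq i<j\leq 1$; there are none, so it can never occur. Alternative (iii) of the lemma ranges over pairs $1\leq i<j\leq s$ with $j>1$, and the condition $j>1$ is automatic when $i<j$. So it is exactly alternative (ii) of the corollary.

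Next, in alternative (iv) the product $\chi(1\leq \omega(j)\leq 1)\chi(1\leq \omega(j-1)\leq 1)$ is nonzero only if $\omega(j)=\omega(j-1)=1$.
\begin{itemize}
\item For $2\leq j\leq s$ this is impossible, since $\omega$ is injective.
\item For $j=1$ it fails because $\omega(0)=0$.
\end{itemize}
Hence every indicator term vanishes. Then \eqref{eq-preliminary-keylemma-1-1}--\eqref{eq-preliminary-keylemma-1-2} become \eqref{eq-preliminary-keycor-1-1}--\eqref{eq-preliminary-keycor-1-2}, and \eqref{eq-preliminary-keylemma-1-t} becomes $1\leq\sum_j d_j\leq e$, which is \eqref{eq-preliminary-keycor-range-t}. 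The positivity condition on the $d_j$ carries over unchanged.

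Finally, for the special case $e=1$, we are in alternative (iii) of the corollary with $\sum_j d_j=1$. Since $\omega(0)=0<\omega(1)$, we have $d_1>0$. This forces $d_1=1$ and $d_j=0$ for $j\geq 2$. The positivity rule then gives $\omega(j-1)>\omega(j)$ for every $j\geq2$, so $\omega$ is strictly decreasing, i.e.\ $\omega(j)=s-j+1$. Telescoping gives $k_{\omega(j)}=b+1+(j-1)c$, that is, $k_i=(s-i)c+b+1$, which is \eqref{eq-preliminary-keycor-t-specialcase}.

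I expect no real obstacle; the only point needing care is confirming that the indicator products vanish for every $j$, including $j=1$.
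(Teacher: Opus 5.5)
Your proposal is correct and takes the same route as the paper, which obtains the corollary simply by setting $r=1$ in Lemma~\ref{lem-preliminary-key}. Your check that every indicator product vanishes (using injectivity of $\omega$ for $j\ge 2$ and $\omega(0)=0$ for $j=1$) and your derivation of the $e=1$ case spell out details the paper leaves implicit: $d_1=1$, all other $d_j=0$, hence $\omega$ is strictly decreasing.
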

Note that when $e=0$, only the first two cases can occur in Corollary~\ref{cor-preliminary-key}.

By taking certain $e$ in Lemma~\ref{lem-preliminary-key}, only the first three cases in the lemma can occur. This is the next corollary.
\begin{cor}\label{cor-preliminary-key-version2}
For positive integers $s$ and $c$, let $n,m,b$ and $k_1,\dots,k_s$ be nonnegative integers such that $s,m\leq n$,
and $1\leq k_{i}\leq (s-1)c+b+\chi(s>n-m+1)(s-n+m-1)$ for $i=1,\ldots,s$. Then for a nonnegative integer $r$ such that $s-n+m\leq r\leq \min\{s,m\}$, at least one of the following holds:
\begin{enumerate}
\item[(i)] $1\leq k_i\leq b$ for some $i$ with $1\leq i\leq s$;
\item[(ii)] $-c-1\leq k_i-k_j\leq c$ for some $(i,j)$ such that $1\leq i<j\leq r$;
\item[(iii)] $-c\leq k_{i}-k_{j}\leq c-1$ for some $(i,j)$ such that $1\leq i<j\leq s$ and $j>r$.
\end{enumerate}
\end{cor}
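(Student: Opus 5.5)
The plan is to apply Lemma~\ref{lem-preliminary-key} with
\[
e:=\chi(s>n-m+1)(s-n+m-1).
\]
We then show that alternative (iv) of that lemma cannot occur, so that one of (i)--(iii) must hold. These are exactly the three alternatives of the corollary.

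First, the hypotheses of Lemma~\ref{lem-preliminary-key} hold. The integers $s,c$ are positive, and $b$ is nonnegative. The quantity $e$ is nonnegative: it is $0$ when $s\le n-m+1$, and it equals $s-n+m-1\ge 1$ otherwise. The bound $1\le k_i\le (s-1)c+b+e$ is precisely the assumed bound. Finally, the integer $r$ satisfies $0\le r\le s$, because $r\le \min\{s,m\}$ and $r$ is nonnegative by hypothesis. So the lemma applies, and at least one of (i)--(iv) holds.

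Now suppose (iv) holds, with a permutation $\omega$ and integers $d_j$. Write
\[
\Sigma:=\sum_{j=1}^{s}\big(d_j+\chi(1\le\omega(j)\le r)\chi(1\le\omega(j-1)\le r)\big),
\]
so that \eqref{eq-preliminary-keylemma-1-t} reads $r\le \Sigma\le e$. We split into two cases according to the size of $s$.

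If $s>n-m+1$, then $e=s-n+m-1$. The hypothesis $r\ge s-n+m$ gives $r>e$, so $r\le\Sigma\le e$ is impossible.

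If $s\le n-m+1$, then $e=0$, so \eqref{eq-preliminary-keylemma-1-t} forces $\Sigma=0$. This case needs a little care, because $r$ may be $0$ and the inequality $r\le\Sigma$ alone gives no contradiction. Instead we use the last clause of (iv). Since $\omega(0)=0<\omega(1)$, we have $d_1>0$. Every summand of $\Sigma$ is nonnegative, so $\Sigma\ge d_1\ge 1>0=e$, a contradiction.

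Hence (iv) never occurs, and one of (i)--(iii) holds. There is no real obstacle here. The only point needing attention is the $r=0$ (or $e=0$) situation just described, which is handled by the positivity of $d_1$ rather than by the lower bound $r\le\Sigma$.
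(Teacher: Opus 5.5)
Your proposal is correct and follows the paper's route: apply Lemma~\ref{lem-preliminary-key} with $e=\chi(s>n-m+1)(s-n+m-1)$ and rule out alternative (iv) using \eqref{eq-preliminary-keylemma-1-t}, which the paper dismisses as a ``routine categorical discussion.'' Your separate treatment of the case $e=0$, $r=0$ is the one detail that sketch glosses over, and you handle it correctly: the lower bound $r\le\Sigma$ is vacuous there, and the contradiction comes from $d_1>0$ (since $\omega(0)=0<\omega(1)$) combined with $\Sigma\le 0$.
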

\begin{proof}
    It is routine to check by categorical discussion that if the case (iv) of Lemma~\ref{lem-preliminary-key} occurs, then there is a contradiction to~\eqref{eq-preliminary-keylemma-1-t}.
\end{proof}

\subsection{The cardinality of an alphabet}
To simplify the notation, for a positive integer $s$, let
\begin{equation}
T_{s}:=(s-1)c+b+\chi(s>n-m+1)(s-n+m-1).
\end{equation}
Note that we suppress the parameters $n,m,b,c$ in $T_{s}$.
\begin{lem}\label{lem-preliminary-cardi-P_mu}
For positive integers $s$, $n$, $m$ such that $m\leq n$ and $n-m+1\leq s\leq n$, let $U=\{u_1,\ldots,u_s\}$ be a set of integers with $1\leq u_1<\cdots<u_s\leq n$. Let $c$ be a positive integer, $b,\mathfrak{e}$ and $k_1,\ldots,k_s$ be nonnegative integers, such that $1\leq k_i\leq T_{s}+\mathfrak{e}$ for $i=1,\ldots,s$. If the $k_i$ are such that the case (iv) of Lemma~\ref{lem-preliminary-key} holds with $e=\chi(s>n-m+1)(s-n+m-1)+\mathfrak{e}$ and $r=|\{u_i:u_i\leq m\}|$, then
\begin{equation}\label{eq-preliminary-cardi}
    \bigg(\frac{q^{c-b}-q^a}{1-q^{c+1}}x_0+\sum_{i=1}^{n}\frac{q^{\chi(i>m)}-q^{c+1}}{1-q^{c+1}}x_i
    \bigg) \bigg|_{\substack{-a=T_{s}+\mathfrak{e}, \\[1pt] x_{u_i}=q^{k_s-k_i}x_{u_s},\,0\leq i\leq s}}
    \end{equation}
    is of the form
    \begin{equation}
        -\cfrac{1-q}{1-q^{c+1}}\big(\alpha_1+\cdots+\alpha_{\mathfrak{e}-1}\big)+\big(\beta_1+\cdots+\beta_{n-s}).
    \end{equation}
    Here each $\alpha_k$ is of the form $x_i q^j$, and $\beta_k$ is a single-letter alphabet, and $u_0=k_0:=0$.
\end{lem}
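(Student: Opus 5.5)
The plan is a direct computation. After the substitution, every letter of the alphabet in~\eqref{eq-preliminary-cardi} becomes a power of $q$ times $x_{u_s}$, except the letters $x_i$ with $i\notin U$. I will regroup the letters attached to $x_0$ and to $U$ into telescoping pairs along the order given by the permutation $\omega$ from case (iv) of Lemma~\ref{lem-preliminary-key}. Each pair should equal $-\frac{1-q}{1-q^{c+1}}$ times a sum of monomials $x_{u_s}q^j$. The variables outside $U$ supply the $\beta$'s, plus some further $\alpha$'s. The last step is to check that the $\alpha$'s number exactly $\mathfrak{e}-1$.

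\textbf{Setup.} Set $y_j:=x_{u_{\omega(j)}}=q^{k_s-k_{\omega(j)}}x_{u_s}$ for $1\le j\le s$. Put $\varepsilon_j:=\chi(u_{\omega(j)}>m)$, and note that $u_{\omega(j)}\le m$ if and only if $\omega(j)\le r$. Then the coefficient of $y_j$ is $(q^{\varepsilon_j}-q^{c+1})/(1-q^{c+1})$. From~\eqref{eq-preliminary-keylemma-1-2} we get
\[
y_{j-1}=q^{c+d_j+\chi_j}y_j,\qquad \chi_j:=\chi(\omega(j)\le r)\chi(\omega(j-1)\le r).
\]
Also $x_0=q^{k_s}x_{u_s}=q^{c+d_1}\cdot q^{b-c}y_1$ by~\eqref{eq-preliminary-keylemma-1-1}. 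The key algebraic identity is
\[
q^{D}-q=-q(1-q)(1+q+\cdots+q^{D-2}).
\]

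\textbf{Pairings.} I pair the term $-q^{c+1}y_j$ with the positive part of the preceding letter, namely $q^{c-b}x_0$ when $j=1$ and $q^{\varepsilon_{j-1}}y_{j-1}$ when $j\ge2$. This gives $q^cy_j(q^{D_j}-q)$, where
\[
D_1=d_1,\qquad D_j=d_j+\chi_j+\varepsilon_{j-1}\ (j\ge2).
\]
So the pair produces $D_j-1$ letters $\alpha$, all of the form $x_{u_s}q^{\ast}$, each multiplied by $-(1-q)/(1-q^{c+1})$.

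One must check $D_j\ge1$, and this is where case (iv) is used.
\begin{itemize}
\item For $j=1$: $d_1>0$ because $\omega(0)=0<\omega(1)$.
\item For $j\ge2$ with $\omega(j-1)>r$: then $\varepsilon_{j-1}=1$.
\item For $j\ge2$ with $\omega(j-1),\omega(j)\le r$: then $\chi_j=1$.
\item For $j\ge2$ with $\omega(j-1)\le r<\omega(j)$: then $\omega(j-1)<\omega(j)$, so $d_j>0$.
\end{itemize}

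\textbf{Leftover terms from $x_0$ and $U$.} What remains is $q^{\varepsilon_s}y_s-q^ax_0$ with $-a=T_s+\mathfrak{e}$. Using $T_s+\mathfrak{e}=(s-1)c+b+e$ and $k_{\omega(s)}=(s-1)c+b+\sum_j(d_j+\chi_j)$, this equals $q^{-E}y_s(q^{\varepsilon_s+E}-1)$, where
\[
E:=e-\sum_j(d_j+\chi_j)\ge0
\]
by~\eqref{eq-preliminary-keylemma-1-t}. This contributes $\varepsilon_s+E$ further $\alpha$'s, with the correct sign.

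\textbf{Variables outside $U$.} A variable $x_i$ with $i\notin U$ and $i\le m$ has coefficient $1$, so it is a $\beta$. If $i>m$, its coefficient is
\[
\frac{q-q^{c+1}}{1-q^{c+1}}=1-\frac{1-q}{1-q^{c+1}},
\]
so it gives one $\beta$ and one $\alpha=x_i$. Hence there are exactly $n-s$ letters $\beta$. The number of extra $\alpha$'s from this step is $(n-m)-(s-r)$.

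\textbf{Counting.} Summing everything gives
\[
\sum_j(D_j-1)+\varepsilon_s+E+(n-m-s+r)=e-s+\sum_j\varepsilon_j+n-m-s+r.
\]
Substitute $\sum_j\varepsilon_j=s-r$ and $e=s-n+m-1+\mathfrak{e}$. The latter holds because $s\ge n-m+1$, and it includes the boundary case $s=n-m+1$. The total then simplifies to $\mathfrak{e}-1$, as required.

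I expect the main obstacle to be this bookkeeping: the $\chi_j$ terms have to cancel against $E$, and the verification that $D_j\ge1$ in every configuration must be complete. These are exactly the points where the hypotheses of case (iv) enter.
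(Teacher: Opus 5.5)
Your computation is correct and is essentially the paper's proof. The paper expands each coefficient as a run of consecutive powers of $q$, shows that the runs coming from the letters $x_{u_{\omega(j)}}$ are pairwise disjoint and lie inside the run coming from $x_0$, and counts what remains. Those leftover powers are exactly your paired letters, and the paper's disjointness inequality $-k_{\omega(j-1)}+\chi(\omega(j-1)>r)>-k_{\omega(j)}+c$ is precisely your condition $D_j\ge 1$; you verify it with the same case analysis that the paper calls routine.

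One point to make explicit: $\chi_1=0$, by the convention $\chi(1\le\omega(0)\le r)=0$ of Lemma~\ref{lem-preliminary-key}. This is what makes $k_{\omega(s)}=(s-1)c+b+\sum_j(d_j+\chi_j)$ hold and lets $E\ge 0$ follow from \eqref{eq-preliminary-keylemma-1-t}.
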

\begin{proof}
    By the definition of $r$, we have $u_i>m$ if and only if $i>r$. Hence,
    \begin{equation}\label{eq-preliminary-cardi-eq1}
    \begin{aligned}
        & \left(\frac{q^{c-b}-q^a}{1-q^{c+1}}x_0
        +\sum_{i=1}^{s}\frac{q^{\chi(u_i>m)}-q^{c+1}}{1-q^{c+1}}x_{u_i}\right)
        \bigg|_{\substack{-a=T_{s}+\mathfrak{e}, \\[1pt] x_{u_i}=q^{k_s-k_i}x_{u_s},\,0\leq i\leq s}}\\
        & = \cfrac{1-q}{1-q^{c+1}}x_{u_s}q^{k_s}
        \Big(  
        \cfrac{q^{c-b}-q^{-T_{s}-\mathfrak{e}}}{1-q}+\sum_{i=1}^s \cfrac{q^{\chi(i>r)}-q^{c+1}}{1-q}q^{-k_i}
        \Big)\\
        & = \cfrac{1-q}{1-q^{c+1}}x_{u_s}q^{k_s} \Big( -\sum_{j=-T_{s}-\mathfrak{e}}^{c-b-1} q^j+ \sum_{i=1}^s \ \sum_{j=-k_i+\chi(i>r)}^{-k_i+c} q^j \Big)\\
        & = \cfrac{1-q}{1-q^{c+1}}x_{u_s}q^{k_s} \Big( -\sum_{j=-T_{s}-\mathfrak{e}}^{c-b-1} q^j+ \sum_{i=1}^s \ \sum_{j=-k_{\omega(i)}+\chi(\omega(i)>r)}^{-k_{\omega(i)}+c} q^j \Big),
    \end{aligned}
    \end{equation}
    where $\omega\in\mathfrak{S}_s$ is the unique permutation such that $k_{\omega(1)}<\cdots<k_{\omega(s)}$. It follows from~\eqref{eq-preliminary-keylemma-1-2} that
    \[
    k_{\omega(i)}-k_{\omega(i-1)}\geq c+\chi(\omega(i)\leq r)\chi(\omega(i-1)\leq r)+\chi(\omega(i-1)<\omega(i))
    \]
    for $i=2,\ldots,s$. As it is routine to check that
    \[
    \chi(\omega(i-1)>r)+\chi(\omega(i)\leq r)\chi(\omega(i-1)\leq r)+\chi(\omega(i-1)<\omega(i))>0,
    \]
    therefore
    \[
    -k_{\omega(i-1)}+\chi(\omega(i-1)>r)>-k_{\omega(i)}+c
    \]
    for $i=2,\ldots,s$.
    Hence, the sum $\sum_{i=1}^s \ \sum_{j=-k_{\omega(i)}+\chi(\omega(i)>r)}^{-k_{\omega(i)}+c} q^j $ has $sc+r$ distinct summands. The maximal power of $q$ in this sum is $-k_{\omega(1)}+c$, which is no more than $c-b-1$ by~\eqref{eq-preliminary-keylemma-1-1}. The minimal power of $q$ in the sum is $-k_{\omega(s)}+\chi(\omega(s)>r)$, which is no less than $-T_{s}-\mathfrak{e}$. So all the summands in $\sum_{i=1}^s \ \sum_{j=-k_{\omega(i)}+\chi(\omega(i)>r)}^{-k_{\omega(i)}+c} q^j $ are eliminated by the summands in $-\sum_{j=-T_{s}-\mathfrak{e}}^{c-b-1} q^j$. Therefore, we can write
    \begin{multline}\label{eq-preliminary-cardi-eq2}
        \cfrac{1-q}{1-q^{c+1}}x_{u_s}q^{k_s} \Big( -\sum_{j=-T_{s}-\mathfrak{e}}^{c-b-1} q^j+ \sum_{i=1}^s \ \sum_{j=-k_{\omega(i)}+\chi(\omega(i)>r)}^{-k_{\omega(i)}+c} q^j \Big)=\\
        -\cfrac{1-q}{1-q^{c+1}}x_{u_s}q^{k_s}\Big(q^{j_1}+\cdots+q^{j_l}\Big),
    \end{multline}
    where $l=T_{s}+\mathfrak{e}-(s-1)c-b-r=(s-n+m-1)+\mathfrak{e}-r$, and $j_1,\ldots,j_{l}\in\{-T_{s}-\mathfrak{e},\ldots,c-b-1\}$.

    On the other hand, it is easy to see that
    \begin{equation}\label{eq-preliminary-cardi-eq3}
    \sum_{\substack{i=1\\i\notin U}}^n \cfrac{q^{\chi(i>m)}-q^{c+1}}{1-q^{c+1}}x_i
    =\sum_{\substack{i=1\\ i\notin U}}^n x_i -\cfrac{1-q}{1-q^{c+1}} \sum_{\substack{i=m+1\\ i\notin U}}^n x_i.
    \end{equation}
    Combining~\eqref{eq-preliminary-cardi-eq1},~\eqref{eq-preliminary-cardi-eq2}, ~\eqref{eq-preliminary-cardi-eq3}, we know that in~\eqref{eq-preliminary-cardi}, there are $l+(n-s-m+r)=\mathfrak{e}-1$ summands with the same multiple $-(1-q)/(1-q^{c+1})$, and the result follows.
\end{proof}

%% file: 5.proof1.tex
\section{Proof of theorem~\ref{Proposition-main}}\label{sec-proof1}
In this section we prove Theorem~\ref{Proposition-main}. The polynomiality of $M_n(a,b,c,\nu,\alpha)$ trivially follows from Lemma~\ref{lem-preliminary-ct-polynomiality} by taking $\tau=|\nu|-|\alpha|$ and
\[
L(x_1,\ldots,x_n)=x^\alpha E_\nu(x^{-1};q^{-1},q^{-c})\prod_{1\leq i<j\leq n}(x_i/x_j)_c(qx_j/x_i)_c
\]
in the lemma. It remains to establish the vanishing property of $M_n(a,b,c,\nu,\alpha)$. For nonnegative integers $d,b,c$, compositions $\alpha=(\alpha_1,\ldots,\alpha_n)$ and $\nu=(\nu_1,\ldots,\nu_n)$, define
\begin{multline}\label{definition-mainprop-Q(d)} 
Q(d)=Q_n(d,b,c,\nu,\alpha)\\
:=x_0^{|\nu|-|\alpha|} x_1^{\alpha_1}\cdots x_n^{\alpha_n} E_\nu(x^{-1};q^{-1},q^{-c})\prod_{i=1}^{n}\frac{(qx_{i}/x_{0})_b}{(q^{-d}x_{0}/x_{i})_d}
\prod_{1\leq i<j\leq n}
(x_i/x_j)_c(qx_j/x_i)_c.
\end{multline}
It is clear that
\[
\CT_x Q(-a)=M_n(a,b,c,\nu,\alpha)
\]
holds for all integers $a$. Therefore, to complete the proof of Theorem~\ref{Proposition-main}, it suffices to show that when $c\geq b\geq \nu_1^+$, the constant term $\CT\limits_x Q(d)$ vanishes for $d\in B_1(\nu^+)$. 

\subsection{The rational functions $Q(d)$ and $Q(d\Mid u^{(s)};k^{(s)})$}\label{subsec-proof1-1} We first give some essential properties of $Q(d)$. Observe that the numerator of $Q(d)$ ---
\[
x_0^{|\nu|-|\alpha|} x_1^{\alpha_1}\cdots x_n^{\alpha_n} E_\nu(x^{-1};q^{-1},q^{-c})\prod_{i=1}^{n}(qx_{i}/x_{0})_b
\prod_{1\leq i<j\leq n}
(x_i/x_j)_c(qx_j/x_i)_c
\]
is a Laurent polynomial in $x_0$ of degree at most $|\nu|$.
The denominator of $Q(d)$ ---
\[
\prod_{i=1}^{n}\big(q^{-d}x_{0}/x_{i}\big)_{d}
\]
is of the form
\[
\prod_{r=1}^{nd} (1-c_r x_0/ x_{i_r}),
\]
where all the $i_r\neq 0$, and $c_r\neq c_v$ whenever $i_r=i_v$. It is a polynomial in $x_0$ of degree $nd$. Thus, for a positive integer $d$, if $nd>|\nu|$, then $Q(d)$ is a rational function of the form \eqref{eq-preliminary-laruent-proper-1} with respect to $x_0$. We can therefore apply Lemma~\ref{lem-preliminary-laruent-proper} to $Q(d)$ and obtain a summation formula.

To express this sum, we first introduce some notation. For a positive integer $s$ such that $1\leq s\leq n$, let
$k=k^{(s)}:=(k_1, k_2, \dots, k_s)$ and $u=u^{(s)}:=(u_1,
u_2, \dots, u_s)$ be two positive integer sequences. For any rational function $F$ of $x_0, x_1, \dots, x_n$, denote the substitution
\begin{equation}\label{def-E_uk}
S_{u,k}F := F\mid_{x_{u_i}\mapsto x_{u_s}q^{k_s-k_i}
\text{, for $i = 0,
1,\dots, s-1$}},
\end{equation}
where we set $u_0 = k_0 = 0$. For $1 \leq u_1 <
u_2 <\dots < u_s \leq n$ and $1\leq k_i\leq d$, define
\begin{equation}\label{definition-mainprop-Q(d|u;k)}
Q(d\Mid u^{(s)};k^{(s)})=Q(d\Mid u_1,\dots,u_s;k_1,\dots,k_s):=S_{u,k}
\bigg(Q(d)\prod_{i=1}^{s}(1-\frac{x_{0}}{x_{u_{i}}q^{k_{i}}})\bigg).
\end{equation}
Note that the product on the right hand side of \eqref{definition-mainprop-Q(d|u;k)}
cancels all the factors in the denominator of $Q$ that would vanish under $S_{u,k}$.

Now if $nd>|\nu|$, we can apply Lemma~\ref{lem-preliminary-laruent-proper}
to $Q(d)$ with respect to $x_0$, and obtain
\begin{equation}\label{eq-mainprop-Q(d)-1}
\CT_{x_0}Q(d)=\sum_{\substack{1\leq k_1\leq d\\1\leq u_1\leq n}}
Q(d\Mid u_1;k_1),
\end{equation}
where $Q(d\Mid u_1;k_1)$ is defined in \eqref{definition-mainprop-Q(d|u;k)} with $s=1$. We can further apply Lemma~\ref{lem-preliminary-laruent-proper} to each $Q(d\Mid u_1;k_1)$ with respect to $x_{u_1}$ if applicable, and get a sum. Continue this process until Lemma~\ref{lem-preliminary-laruent-proper} does not apply to every summand. In other words, every summand cannot be written as a sum by Lemma~\ref{lem-preliminary-laruent-proper}. Finally we arrive at
\begin{equation}\label{eq-mainprop-Q(d)-sum}
\CT_{x}Q(d)=\sum_{s\in T\subseteq \{1,\dots,n\}}\sum_{\substack{1\leq u_1<\cdots<u_s\leq n\\1\leq k_1,\dots,k_s\leq d}}
\CT_xQ(d\Mid u_1,\dots,u_s;k_1,\dots,k_s).
\end{equation}
This process, first introduced by Gessel and Xin \cite{GX}, is usually referred to as the Gessel--Xin operation to the rational function $Q(d)$.

Let $U=\{u_1,\ldots,u_s\}$. We write the explicit expression of $Q(d\Mid u^{(s)};k^{(s)})$ as
\begin{equation}\label{eq-mainprop-Q(d|r;k)-HVLform}
Q(d\Mid u^{(s)};k^{(s)})=N\times V \times L\\
\times\prod_{\substack{1\leq i<j\leq n\\i,j\notin U}}\big(x_i/x_j\big)_{c}
\big(qx_j/x_i\big)_{c},
\end{equation}
where
\[
N=S_{u,k}\bigg(x_0^{|\nu|-|\alpha|} x_1^{\alpha_1}\cdots x_n^{\alpha_n} E_\nu(x^{-1};q^{-1},q^{-c})\bigg),
\]
\[
V=\prod_{i=1}^s(q^{k_i-d})^{-1}_{d-k_i}(q)^{-1}_{k_i-1}(q^{1-k_i})_{b}\prod_{1\leq i<j\leq s}(q^{k_j-k_i})_c(q^{k_i-k_j+1})_c,
\]
and
\begin{equation}\label{eq-mainprop-Q(d|r;k)-L}
L=\prod_{\substack{i=1\\ i\notin U}}^n
\frac{(q^{1-k_s}x_i/x_{u_s})_{b}}{\big(q^{k_s-d}x_{u_s}/x_i\big)_d}
\prod_{\substack{i=1\\ i\notin U}}^n\prod_{j=1}^s
\big(q^{k_j-k_s+\chi(i>u_j)}x_i/x_{u_s}\big)_c
\big(q^{k_s-k_j+\chi(u_j>i)}x_{u_s}/x_i\big)_c.
\end{equation}
Note that $V$ is a constant factor of $Q(d\Mid u^{(s)};k^{(s)})$, and the variable $x_{u_s}$ appears only in $N$ and $L$.

The next proposition shows that, under certain conditions, the rational function $Q(d\Mid u^{(s)};k^{(s)})$ can be written as a Laurent polynomial in $x_{u_s}.$ The proof of this proposition is straightforward: it follows by verifying that the numerator of $L$ cancels with its denominator. For a detailed proof, we refer the reader to~\cite[Proposition 8.4]{Zhou-AFLT}.

\begin{prop}\label{prop-mainprop-Q(d|r;k)-polynomial}
Let $1\leq d\leq sc+b$. Then for $i=1,\ldots,s$, the $k_i$ in $Q(d\Mid u^{(s)};k^{(s)})$ satisfy $1\leq k_i\leq d\leq sc+b$. By Corollary~\ref{cor-preliminary-key} with $e=c$, only three cases can occur for the \( k_i \). If the third case holds, then the rational function $L$ in \eqref{eq-mainprop-Q(d|r;k)-L} can be written as a Laurent polynomial in $x_{u_s}$ of the form
\begin{equation}\label{eq-mainprop-Q(d|r;k)-polynomialform}
x_{u_s}^{(n-s)(sc-d)}\prod_{\substack{i=1\\ i\notin U}}^n
\Big(p_i(x_i/x_{u_s})x_i^{d-sc}\Big),
\end{equation}
where the $p_i(z)$ are polynomials in $z$.
\end{prop}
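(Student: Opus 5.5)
We verify directly that the numerator of $L$ in \eqref{eq-mainprop-Q(d|r;k)-L} absorbs its denominator factor by factor, separately for each $i\notin U$. Fix $i\notin U$. Up to a monomial, the denominator contributed by $x_i$ is
\[
\big(q^{k_s-d}x_{u_s}/x_i\big)_d=\prod_{j=k_s-d}^{k_s-1}\big(1-q^{j}x_{u_s}/x_i\big).
\]
So it suffices to show that every exponent $j\in\{k_s-d,\ldots,k_s-1\}$ also occurs among the factors $1-q^{j}x_{u_s}/x_i$ of the numerator. To compare, we rewrite each numerator factor of the form $(1-q^{e}x_i/x_{u_s})$ as $-q^{e}(x_i/x_{u_s})(1-q^{-e}x_{u_s}/x_i)$. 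This rewriting only introduces monomial prefactors, and these are exactly what produce the power $x_{u_s}^{(n-s)(sc-d)}$ and the factors $x_i^{d-sc}$ in \eqref{eq-mainprop-Q(d|r;k)-polynomialform}.

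After the rewriting, the numerator associated with $x_i$ supplies the following blocks of exponents.
\begin{itemize}
\item From $(q^{1-k_s}x_i/x_{u_s})_b$: the exponents $k_s-b,\ldots,k_s-1$.
\item From the pair attached to each $j=1,\ldots,s$: the consecutive block
\[
k_s-k_j-c+1-\chi(i>u_j),\ \ldots,\ k_s-k_j+c-1+\chi(u_j>i).
\]
This block has length $2c$. Note that exactly one of $\chi(i>u_j)$ and $\chi(u_j>i)$ equals $1$, so the shift is determined by the position of $i$ relative to $u_j$.
\end{itemize}

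Next we use the third case of Corollary~\ref{cor-preliminary-key} with $e=c$. Order the $k_j$ as $k_{\omega(1)}<\cdots<k_{\omega(s)}$, so that $k_{\omega(1)}=b+d_1$, $k_{\omega(j)}-k_{\omega(j-1)}=c+d_j$, and $\sum d_j\le c$. The target interval is $[k_s-d,k_s-1]$, and its lower end satisfies $k_s-d\ge k_s-sc-b$. We cover it by the blocks in order.
\begin{itemize}
\item The $b$-block covers the top portion, ending at $k_s-1$.
\item Proceeding downward, the block attached to $\omega(1)$ is centred near $k_s-k_{\omega(1)}=k_s-b-d_1$, with radius about $c$. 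It therefore overlaps the $b$-block, because $d_1\le c$.
\item Consecutive blocks have centres differing by $c+d_j\le 2c$. Since each block has length $2c$, the gaps between adjacent blocks are bridged.
\item The last block reaches down to $k_s-k_{\omega(s)}-c$. Using $k_{\omega(s)}\le d$, this reaches at least to $k_s-d$, as required.
\end{itemize}
The $\chi$ shifts and the conditions $d_j>0$ whenever $\omega(j-1)<\omega(j)$ handle the boundary cases. They guarantee that adjacent blocks overlap or abut exactly, rather than leaving a one-step hole when $d_j=c$ on one side. We must also check that no exponent of the denominator is used twice. This holds because the denominator's exponents are distinct and each can be matched to one block.

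Once every denominator factor is cancelled, $L$ is a Laurent polynomial in $x_{u_s}$. Collecting the monomial prefactors gives the form \eqref{eq-mainprop-Q(d|r;k)-polynomialform} with polynomial $p_i$. The step requiring the most care is the bookkeeping at block endpoints. This is the covering argument of \cite[Proposition 8.4]{Zhou-AFLT}, which we follow verbatim.
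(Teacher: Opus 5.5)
Your approach is the same as the paper's. The paper only asserts that the numerator factors of $L$ cancel the denominator $\prod_{i\notin U}(q^{k_s-d}x_{u_s}/x_i)_d$ and refers to \cite[Proposition 8.4]{Zhou-AFLT}. Your block-by-block covering of the exponents $k_s-d,\ldots,k_s-1$ is exactly that verification, and your blocks are computed correctly. Two of your justifications need repair, though.

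Write the exponents as $k_s-y$. Then the target is $y\in[1,d]$, and the block attached to $j$ covers $y\in[k_j-c+1-\chi(u_j>i),\,k_j+c-1+\chi(i>u_j)]$.

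\emph{The bottom of the interval.} You need $k_{\omega(s)}+c-1+\chi(i>u_{\omega(s)})\ge d$. This does not follow from $k_{\omega(s)}\le d$, which points the wrong way. It follows from $k_{\omega(s)}=b+(s-1)c+\sum_j d_j\ge (s-1)c+b+1$ together with $d\le sc+b$.

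\emph{The one-step hole.} For $j\ge 2$, a hole between the blocks of $\omega(j-1)$ and $\omega(j)$ occurs exactly when $d_j\ge c+\chi(i>u_{\omega(j-1)})+\chi(u_{\omega(j)}>i)$. The $\chi$-shifts do not prevent this. If $u_{\omega(j)}<i<u_{\omega(j-1)}$, both $\chi$'s vanish, and $d_j=c$ would leave $y=k_{\omega(j-1)}+c\in[1,d]$ uncovered by every block. What rules it out is the following: $d_1\ge 1$ (because $\omega(0)=0<\omega(1)$) and $\sum_j d_j\le c$ together force $d_j\le c-1$ for all $j\ge 2$.

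In both places the essential input is the lower bound $\sum_j d_j\ge 1$ from the third case, which your write-up never actually uses. With these two fixes the argument is complete. The final bookkeeping is also routine: cancelling leaves $(x_i/x_{u_s})^{b+sc}$ times a polynomial in $x_{u_s}/x_i$ of degree $b+2sc-d$, and reversing that polynomial gives $(x_i/x_{u_s})^{d-sc}p_i(x_i/x_{u_s})$.
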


The next proposition gives vanishing and recursive properties for $Q(d\Mid u^{(s)};k^{(s)})$. 
\begin{prop}\label{prop-mainprop-Q(d|u;k)-properties}
Let $s$ be a positive integer such that $1\leq s\leq n$.
\begin{enumerate}
\item If $d\leq (s-1)c+b$, then $Q(d\Mid u^{(s)};k^{(s)})=0$;

\item If $d>sc+\sum_{k=1}^{n-s}\nu^+_k/(n-s)$ and $s\neq n$, then
\begin{equation}\label{prop-mainprop-Q(d|u;k)-properties-propercase}
\CT_{x_{u_s}}Q(d\Mid u^{(s)};k^{(s)})=
\begin{cases}\displaystyle
\sum_{\substack{u_s<u_{s+1}\leq n\\1\leq k_{s+1}\leq d}}
Q(d\Mid u_1,\dots,u_s,u_{s+1};k_1,\dots,k_s,k_{s+1}) \quad &\text{for $u_s<n$,}\\
0 \quad &\text{for $u_s=n$}.
\end{cases}
\end{equation}
\end{enumerate}
\end{prop}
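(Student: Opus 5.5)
Both parts follow the standard Gessel--Xin template of \cite[Proposition 8.5]{Zhou-AFLT}, adapted to the extra factor $N$ containing $E_\nu(x^{-1};q^{-1},q^{-c})$.

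\emph{Part (1).} I would read off the constant factor $V$ in \eqref{eq-mainprop-Q(d|r;k)-HVLform}. Every $k_i$ satisfies $1\leq k_i\leq d\leq (s-1)c+b$, so Corollary~\ref{cor-preliminary-key} applies with $e=0$, and only its cases (i) and (ii) can occur. In case (i) we have $1\leq k_i\leq b$ for some $i$, so $(q^{1-k_i})_b$ contains the factor $1-q^0$ and vanishes. In case (ii) we have $-c\leq k_i-k_j\leq c-1$ for some $i<j$. If $k_j-k_i\in\{1-c,\dots,0\}$, then $(q^{k_j-k_i})_c=0$. If $k_j-k_i\in\{1,\dots,c\}$, then $k_i-k_j+1\in\{1-c,\dots,0\}$, so $(q^{k_i-k_j+1})_c=0$. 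The remaining factors of $V$, namely $(q^{k_i-d})_{d-k_i}^{-1}$ and $(q)_{k_i-1}^{-1}$, are finite and nonzero because $k_i\leq d$. Hence $V=0$ and $Q(d\Mid u^{(s)};k^{(s)})=0$.

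\emph{Part (2).} The plan is to apply Lemma~\ref{lem-preliminary-laruent-proper} in the variable $x_{u_s}$.

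First I would check that $Q(d\Mid u^{(s)};k^{(s)})$ is proper in $x_{u_s}$. The denominator comes from $L$, through $\prod_{i\notin U}(q^{k_s-d}x_{u_s}/x_i)_d$, which has degree $(n-s)d$ in $x_{u_s}$. Its linear factors $1-q^{k_s-d+j}x_{u_s}/x_i$ have distinct constants for each fixed $i$, as the lemma requires.

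Next I would bound the degree of the numerator in $x_{u_s}$. Under $S_{u,k}$, $x_0$ and $x_{u_1},\dots,x_{u_s}$ all become multiples of $x_{u_s}$, so the numerator has total degree $0$ in $\{x_0,x_U\}$ apart from the $x_i$ with $i\notin U$. The monomial prefactor $x_0^{|\nu|-|\alpha|}x^\alpha E_\nu(x^{-1})$ is homogeneous of degree $0$ overall. After substitution, its degree in $x_{u_s}$ equals minus its total degree in the variables $x_i$, $i\notin U$. By Corollary~\ref{cor-nonmac-degree}, applied to $E_\nu(x^{-1})$ in the $n-s$ variables indexed by the complement of $U$, that degree in $x_{u_s}$ is at most $\nu^+_1+\cdots+\nu^+_{n-s}$ (up to the correction from $x^\alpha$, which I would need to track). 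Each factor $(x_{u_s}/x_i)$-Pochhammer pair in $L$ contributes, for each $i\notin U$ and each $j$, $c$ powers of $x_{u_s}$, giving $(n-s)sc$ in total. The $b$-factor in the numerator points the other way and only contributes negative powers of $x_{u_s}$.

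So the numerator degree is at most $(n-s)sc+\sum_{k\leq n-s}\nu_k^+$, which is strictly less than $(n-s)d$ by the hypothesis on $d$. This is exactly the hypothesis of Lemma~\ref{lem-preliminary-laruent-proper}.

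Applying the lemma, only the poles $x_{u_s}=q^{d-k_s-j}x_i$ with $i>u_s$ contribute. Writing $k_{s+1}$ for the resulting exponent, these are exactly the substitutions defining $Q(d\Mid u_1,\dots,u_{s+1};k_1,\dots,k_{s+1})$ with $1\leq k_{s+1}\leq d$, which gives the first line of \eqref{prop-mainprop-Q(d|u;k)-properties-propercase}. If $u_s=n$, there is no $i>u_s$, so the sum is empty and the constant term is $0$.

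\emph{Main obstacle.} The hard step is the degree count for $N$: the $x^\alpha$ factor and $x_0^{|\nu|-|\alpha|}$ must be accounted for exactly, so that the bound reduces to $\sum_{k\le n-s}\nu_k^+$. I would first check this by computing the substituted degree of $x_0^{|\nu|-|\alpha|}x^\alpha$ together with the lowest-degree part of $E_\nu(x^{-1})$ in the variables $x_U$, using the lower bound in Corollary~\ref{cor-nonmac-degree}.
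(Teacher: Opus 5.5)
Your proposal is correct and follows the paper's proof essentially step for step. Part (1) kills $Q(d\Mid u^{(s)};k^{(s)})$ through the factor $(q^{1-k_i})_b$ or $(q^{k_j-k_i})_c(q^{k_i-k_j+1})_c$, using Corollary~\ref{cor-preliminary-key} with $e=0$; part (2) is the same degree count $(n-s)sc+\sum_{k\le n-s}\nu^+_k<(n-s)d$ via Corollary~\ref{cor-nonmac-degree}, followed by Lemma~\ref{lem-preliminary-laruent-proper}. The $x^\alpha$ correction you flag is harmless: after $S_{u,k}$ the $x_{u_s}$-degree of $x_0^{|\nu|-|\alpha|}x^\alpha x^{-\beta}$ (with $|\beta|=|\nu|$) equals $\sum_{i\notin U}(\beta_i-\alpha_i)\le\sum_{i\notin U}\beta_i$, since $\alpha\in\mathbb{N}^n$, so the bound goes through unchanged.
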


\begin{proof}
(1) Since $d\leq (s-1)c+b$, for $i=1,\dots,s$ we have
\[
1\leq k_i\leq d \leq (s-1)c+b.
\]
By Corollary~\ref{cor-preliminary-key} with $e=0$, either $1\leq k_i\leq b$ for some $i$ with $1\leq i\leq s$,
or $-c\leq k_i-k_j\leq c-1$ for some $(i,j)$ such that $1\leq i<j\leq s$. If $1\leq k_i\leq b$ for some $i$, then $Q(d\Mid u^{(s)};k^{(s)})$ has the factor
\[
S_{u,k}\Big[\big(qx_{u_i}/x_0\big)_{b}\Big]=\big(q^{1-k_i}\big)_{b}=0.
\]
If $-c\leq k_i-k_j\leq c-1$ for some $(i,j)$, then $Q(d\Mid u^{(s)};k^{(s)})$ has the factor
\[
S_{u,k}\Big[\big(x_{u_i}/x_{u_j}\big)_c\big(qx_{u_j}/x_{u_i}\big)_c\Big],
\]
which is equal to
\[
S_{u,k}\Big[q^{\binom{c+1}{2}}(-x_{u_j}/x_{u_i})^c\big(q^{-c}x_{u_i}/x_{u_j}\big)_{2c}\Big]
=q^{\binom{c+1}{2}}(-q^{k_i-k_j})^c\big(q^{k_j-k_i-c}\big)_{2c}=0.
\]
Therefore, $Q(d\Mid u^{(s)};k^{(s)})=0$ as it always contains a zero factor.

(2) We first show that $Q(d\Mid u^{(s)};k^{(s)})$ is of the form \eqref{eq-preliminary-laruent-proper-1} if $d>sc+\sum_{k=1}^{n-s}\nu^+_k/(n-s)$. Let $U=\{u_1,u_2,\dots,u_s\}$.  By the expression of $Q(d\Mid u^{(s)};k^{(s)})$ in~\eqref{eq-mainprop-Q(d|r;k)-HVLform}, the part that contributes to the degree in $x_{u_s}$
in the numerator of $Q(d\Mid u^{(s)};k^{(s)})$ is
\begin{equation*}
S_{u,k}\bigg(x_0^{|\nu|-|\alpha| } x_1^{\alpha_1}\cdots x_n^{\alpha_n}  E_\nu(x^{-1};q^{-1},q^{-c})\bigg) \prod_{\substack{i=1\\ i\notin U}}^n\prod_{j=1}^s
\big(q^{k_s-k_j+\chi(u_j>i)}x_{u_s}/x_i\big)_c,
\end{equation*}
which has degree in $x_{u_s}$ at most $(n-s)sc+\sum_{k=1}^{n-s}\nu^+_k$ by Corollary~\ref{cor-nonmac-degree}. The part that contributes to the degree in $x_{u_s}$ in the denominator of $Q(d\Mid u^{(s)};k^{(s)})$ is
\[
\prod_{\substack{i=1\\ i\notin U}}^n
\big(q^{k_s-d}x_{u_s}/x_i\big)_d,
\]
which has degree $(n-s)d$. Therefore, when $d>sc+\sum_{k=1}^{n-s}\nu^+_k/(n-s)$, the rational function $Q(d\Mid u^{(s)};k^{(s)})$ is of the form \eqref{eq-preliminary-laruent-proper-1}.
Applying Lemma~\ref{lem-preliminary-laruent-proper} yields
\[
\CT_{x_{u_s}}Q(d\Mid u^{(s)};k^{(s)})=
\begin{cases}\displaystyle
\sum_{\substack{u_s<u_{s+1}\leq n\\1\leq k_{s+1}\leq d}}
Q(d\Mid u_1,\dots,u_s,u_{s+1};k_1,\dots,k_s,k_{s+1}) \quad &\text{for $u_s<n$,}\\
0 \quad &\text{for $u_s=n$.}
\end{cases}
\]
This completes the proof.
\end{proof}

\begin{cor}\label{cor-mainprop-Q(d)-i}
    Assume $c\geq b\geq \nu_1^+$. If there is an $i\in\{1,\ldots,n-1\}$ such that $ic+\nu^+_{n-i}+1\leq d\leq ic+b$, then
    \begin{equation}\label{eq-mainprop-Q(d)-i}
    \CT_{x}Q(d)=\sum_{\substack{1\leq u_1<\cdots<u_{i}\leq n\\1\leq k_1,\dots,k_{i}\leq d}}
    \CT_x Q(d\Mid u^{(i)};k^{(i)}).
    \end{equation}
\end{cor}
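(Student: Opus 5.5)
We will obtain \eqref{eq-mainprop-Q(d)-i} by running the Gessel--Xin operation on $Q(d)$ explicitly for $i$ steps, using Proposition~\ref{prop-mainprop-Q(d|u;k)-properties}(2) for the expansion at each step and part (1) to discard summands. The plan is an induction on $s=1,\ldots,i$ of the claim
\[
\CT_x Q(d)=\sum_{\substack{1\leq u_1<\cdots<u_s\leq n\\1\leq k_1,\dots,k_s\leq d}}\CT_x Q(d\Mid u^{(s)};k^{(s)}).
\]

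\emph{Base case $s=1$.} This is \eqref{eq-mainprop-Q(d)-1}, which is valid provided $nd>|\nu|$. Since $i\geq 1$ and $\nu^+_{n-i}\geq 0$, we have $d\geq c+1$. Because $c\geq b\geq \nu^+_1$, this gives $d>\nu_1^+$, and hence $nd>n\nu_1^+\geq|\nu|$.

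\emph{Inductive step from $s$ to $s+1$, for $1\leq s\leq i-1$.} Commutativity of the constant-term operators lets us first take $\CT_{x_{u_s}}$ in each summand. We apply Proposition~\ref{prop-mainprop-Q(d|u;k)-properties}(2), which requires $s\neq n$ (clear, since $s<i\leq n-1$) and $d>sc+\frac{1}{n-s}\sum_{k=1}^{n-s}\nu^+_k$. The latter follows from
\[
d\geq ic+1\geq (s+1)c+1>sc+\nu_1^+\geq sc+\frac{1}{n-s}\sum_{k=1}^{n-s}\nu_k^+,
\]
where we use $c\geq b\geq \nu_1^+$. Each term with $u_s<n$ then expands into the terms indexed by $u_{s+1}>u_s$ and $1\leq k_{s+1}\leq d$, and each term with $u_s=n$ contributes $0$. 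Together these give exactly the sum at level $s+1$, and this completes the induction up to $s=i$.

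\emph{Ruling out earlier stopping.} It remains to check that the Gessel--Xin process in \eqref{eq-mainprop-Q(d)-sum} stops exactly at level $i$, so that no summands at level $s<i$ survive and nothing expands to level $i+1$. The first point is handled by the induction: at every level $s<i$ the properness condition holds, so no terms are left unexpanded. For the second point, there are no level $s\geq i+1$ terms to consider, because we stop the expansion at $s=i$, and we may legitimately do so since we are only asserting an equality of constant terms. In fact, any term at level $i+1$ would vanish by Proposition~\ref{prop-mainprop-Q(d|u;k)-properties}(1), since $d\leq ic+b=((i+1)-1)c+b$; this shows the sum at level $i$ is the natural endpoint.

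\emph{Main obstacle.} The only delicate point is verifying the degree hypothesis of Proposition~\ref{prop-mainprop-Q(d|u;k)-properties}(2) uniformly for all $s<i$. This is where the assumption $c\geq b\geq\nu_1^+$ enters: it bounds the averaged partial sum $\frac{1}{n-s}\sum_{k=1}^{n-s}\nu^+_k$ by $\nu_1^+\leq c$. The lower bound $d\geq ic+\nu^+_{n-i}+1$ is stronger than this argument needs, since only $d\geq ic+1$ is used.
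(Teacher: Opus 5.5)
Your proof is correct and uses the same ingredients as the paper: $nd>|\nu|$ to start the Gessel--Xin expansion, and the bound $d\geq (s+1)c+1>sc+\nu_1^+$ to apply part (2) of Proposition~\ref{prop-mainprop-Q(d|u;k)-properties} at every level $s\leq i-1$. The only difference is presentational. The paper works from the maximal expansion \eqref{eq-mainprop-Q(d)-sum} and therefore also invokes part (1) to kill the levels $s\geq i+1$. Your explicit level-by-level induction proves the equality directly and, as you note, needs neither part (1) nor the upper bound $d\leq ic+b$.
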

\begin{proof}
Under the given conditions, we have 
$nd> nc\geq |\nu|$, and thus the expansion~\eqref{eq-mainprop-Q(d)-sum} holds. To complete the proof, it suffices to show that the index $s$ in each nonzero term $Q(d\Mid u^{(s)};k^{(s)})$ in~\eqref{eq-mainprop-Q(d)-sum} can only be $i$.

If $s\geq i+1$, we have $d\leq ic+b \leq (s-1)c+b$. By the case (1) of Proposition~\ref{prop-mainprop-Q(d|u;k)-properties} we have
$Q(d\Mid u^{(s)};k^{(s)})=0$.
If $i-1 \geq s$, then together with $c\geq \nu_1^+$, we have
\[
d\geq ic+\nu^+_{n-i}+1\geq (s+1)c+1 > sc+\sum_{k=1}^{n-s}\nu^+_k/(n-s).
\]
It is clear that $s\neq n$ since $s\leq i-1\leq n-2$.
By the case (2) of Proposition~\ref{prop-mainprop-Q(d|u;k)-properties}, the summand $\CT\limits_x Q(d\Mid u^{(s)};k^{(s)})$ is either zero or can be written as a sum (against that $s$ is maximal). Hence, the index $s$ in each nonzero term $Q(d\Mid u^{(s)};k^{(s)})$ in~\eqref{eq-mainprop-Q(d)-sum} can only be $i$. 
\end{proof}

\subsection{Proof of Theorem~\ref{Proposition-main}}
We now complete the proof of Theorem~\ref{Proposition-main} by showing that when $c\geq b\geq \nu_1^+$, 
\[
\CT\limits_x Q(d)=0\quad  \text{ for $d\in B_1(\nu^+)$ }.
\]
Here we recall that
\[
B_1(\nu^+)=\bigcup_{i=0}^{n-1} 
\{ic+\nu_{n-i}^++1,\ldots,ic+b\}.
\]
\begin{proof}[Proof of Theorem~\ref{Proposition-main}]
We first prove that $\CT\limits_x Q(d)=0$ for $\nu^+_n+1\leq d\leq b$. In this case, the rational function $Q(d)$ can be rewritten as
\begin{multline}\label{eq-mainprop-Q(d)-d<b-case}
Q(d)=x_0^{|\nu|-|\alpha| } x_1^{\alpha_1}\cdots x_n^{\alpha_n}  E_\nu(x^{-1};q^{-1},q^{-c})\\
\times \prod_{i=1}^{n}(-x_i/x_0)^d q^{\binom{d+1}{2}} (q^{d+1}x_i/x_0)_{b-d}
\prod_{1\leq i<j\leq n}
(x_i/x_j)_c(qx_j/x_i)_c.
\end{multline}
By taking the constant term with respect to $x_0$ in (\ref{eq-mainprop-Q(d)-d<b-case}), we express $\CT\limits_{x} Q(d)$ as a finite sum of terms of the form
\[
\CT_x x_1^{v_1}\cdots x_n^{v_n} E_\nu(x^{-1};q^{-1},q^{-c})\prod_{1\leq i<j\leq n}
(x_i/x_j)_c(qx_j/x_i)_c,
\]
where $v=(v_1,\ldots,v_n)\in\mathbb{N}^n$ satisfies $|v|=|\nu|$, and $v_i\geq d\geq \nu^+_n+1$ for $i=1,\ldots,n$. 
Observe that $\nu^+\not\leq v^+$, and so $\nu\not\preceq v$. By Lemma~\ref{lem-nonmac-vanish-property} (with $t=q^c$), we have
\[
\CT_x  x_1^{v_1}\cdots x_n^{v_n} E_\nu(x^{-1};q^{-1},q^{-c})\prod_{1\leq i<j\leq n}
(x_i/x_j)_c(qx_j/x_i)_c=\langle x^v, E_\nu\rangle_{q,q^c,n}=0.
\]
Therefore, $\CT\limits_x Q(d)=0$.

Now fix $i\in\{1,\ldots,n-1\}$. We show that the constant term $\CT\limits_x Q(d)=0$ for $ic+\nu^+_{n-i}+1\leq d\leq ic+b$. In this case, 
\begin{equation}\label{eq-mainprop-q(d)-i}
\CT_{x}Q(d)=\sum_{\substack{1\leq u_1<\cdots<u_{i}\leq n\\1\leq k_1,\dots,k_{i}\leq d}} \CT_x Q(d\Mid u^{(i)};k^{(i)})
\end{equation}
by Corollary~\ref{cor-mainprop-Q(d)-i}. Thus, to show $\CT\limits_x Q(d)=0$, it suffices to show $\CT\limits_x Q(d\Mid u^{(i)};k^{(i)})=0$ for every summand in the above sum. Since $ic+\nu^+_{n-i}+1\leq d\leq ic+b$ and $b\leq c$, we can write $d=(i-1)c+b+e$ for a positive integer $e$.
It follows that the $k_j$ in $Q(d\Mid u^{(i)};k^{(i)})$ satisfy 
\[
1\leq k_j\leq d=(i-1)c+b+e\leq ic+b 
\]
for $j=1,\dots,i$. By Corollary~\ref{cor-preliminary-key} with $s=i$, at least one of the following holds:
\begin{enumerate}
\item[(i)] $1\leq k_j\leq b$ for some $j$ with $1\leq j\leq i$;
\item[(ii)] $-c\leq k_r-k_l\leq c-1$ for some $(r,l)$ such that $1\leq r<l\leq i$;
\item[(iii)] there exists a permutation $w\in\mathfrak{S}_{i}$ and nonnegative integers $d_1,\dots,d_{i}$
such that
\[
k_{w(1)}=b+d_1,
\]
and
\[
k_{w(j)}-k_{w(j-1)}=c+d_j \quad \text{for $2\leq j\leq i$.}
\]
Here the $d_j$ satisfy
\[
\sum_{j=1}^{i}d_j\leq e,
\]
$w(0):=0$, and $d_j>0$ if $w(j-1)<w(j)$ for $1\leq j\leq i$.
\end{enumerate}
If either of the first two cases holds, by the same argument as that in the proof of the case (1) of Proposition~\ref{prop-mainprop-Q(d|u;k)-properties}, we have $\CT\limits_x Q(d\Mid u^{(i)};k^{(i)})=0$. If the case (iii) holds, let $U:=\{u_1,\dots,u_{i}\}$, $U^c:=\{1,\ldots,n\}\setminus U=\{j_1,\ldots,j_{n-i}\}$ with the $j_l$ in an increasing order. By Proposition~\ref{prop-mainprop-Q(d|r;k)-polynomial} with $s=i$, we can rewrite $Q(d\Mid u^{(i)};k^{(i)})$ as a Laurent polynomial in $x_{u_{i}}$. That is, 
\begin{equation}\label{eq-mainprop-Q(d|r;k)-s=i-polynomial}
\begin{aligned}
\CT\limits_x Q(d\Mid u^{(i)};k^{(i)})&= q^{k_i(|\nu|-|\alpha|)}\prod_{l=1}^i(q^{k_l-d})^{-1}_{d-k_l}(q)^{-1}_{k_l-1}(q^{1-k_l})_{b}\prod_{1\leq r<l\leq i}(q^{k_l-k_r})_c(q^{k_r-k_l+1})_c\\
&\quad\times \CT\limits_x x_{u_{i}}^{|\nu|-|\alpha|+(n-i)(ic-d)} S_{u^{(i)},k^{(i)}}\big(x_1^{\alpha_1}\cdots x_n^{\alpha_n} E_\nu(x^{-1};q^{-1},q^{-c})\big)
\\
&\quad\times \prod_{l=1}^{n-i}
\big(p_l(x_{j_l}/x_{u_{i}})x_{j_l}^{d-ic}\big)
\prod_{1\leq r<l\leq n-i}(x_{j_r}/x_{j_l})_c
(qx_{j_l}/x_{j_r})_c,
\end{aligned}
\end{equation}
where the $p_l(z)$ are polynomials in $z$. By Lemma~\ref{cor-nonmac-splitvariables} with $I\mapsto U$, $J\mapsto U^c$, $x\mapsto x^{-1}$, $t\mapsto q^c$, we have
\begin{multline*}
E_\nu(x_1^{-1},\ldots,x_n^{-1};q^{-1},q^{-c})=\\
\sum_{\substack{\rho\in\mathbb{N}^s,\gamma\in\mathbb{N}^{n-s}\\\rho^+,\gamma^+\subseteq \nu^+}} c_{\rho\gamma}^\nu(U;U^c) E_{\rho}(x_{u_1}^{-1},\ldots,x_{u_i}^{-1};q^{-1},q^{-c}) E_{\gamma}(x_{j_1}^{-1},\ldots,x_{j_{n-i}}^{-1};q^{-1},q^{-c}),
\end{multline*}
where $c_{\rho\gamma}^\nu(U;U^c)\in \mathbb{F}$. Hence,
\begin{multline*}
S_{u^{(i)},k^{(i)}}\big(E_\nu(x_1^{-1},\ldots,x_n^{-1};q^{-1},q^{-c})\big)=\\
\sum_{\substack{\rho\in\mathbb{N}^s,\gamma\in\mathbb{N}^{n-s}\\\rho^+,\gamma^+\subseteq \nu^+}} c_{\rho\gamma}^\nu(U;U^c) x_{u_i}^{-|\rho|}E_{\rho}(q^{k_1-k_i},\ldots,q^{k_{i-1}-k_i},1;q^{-1},q^{-c}) E_\gamma(x_{j_1}^{-1},\ldots,x_{j_{n-i}}^{-1};q^{-1},q^{-c}).
\end{multline*}
By taking the constant term with respect to $x_{u_i}$ in \eqref{eq-mainprop-Q(d|r;k)-s=i-polynomial}, we write $\CT\limits_x Q(d\Mid u^{(i)};k^{(i)})$ as a finite sum, up to a constant multiple, each summand is of the form
\[
\CT\limits_x x_{j_1}^{v_1}\cdots x_{j_{n-i}}^{v_{n-i}}  E_\gamma(x_{j_1}^{-1},\ldots,x_{j_{n-i}}^{-1};q^{-1},q^{-c}) \prod_{1\leq r<l\leq n-i}\big(x_{j_r}/x_{j_l}\big)_c
\big(qx_{j_l}/x_{j_r}\big)_c,
\]
where $v,\gamma$ are compositions such that $|v|=|\gamma|$,  $v_k\geq d-ic\geq \nu^+_{n-i}+1$ for $k=1,\ldots,n-i$, and $\gamma^+\subseteq \nu^+, \ell(\gamma^+)\leq n-i$. Observe that $\gamma\not\preceq v$. Applying Lemma~\ref{lem-nonmac-vanish-property} again, we obtain 
\begin{multline*}
    \CT_x x_{j_1}^{v_1} \cdots x_{j_{n-i}}^{v_{n-i}}  E_\gamma(x_{j_1}^{-1},\ldots,x_{j_{n-i}}^{-1};q^{-1},q^{-c}) \prod_{1\leq r<l\leq n-i}\big(x_{j_r}/x_{j_l}\big)_c
    \big(qx_{j_l}/x_{j_r}\big)_c\\
    =\langle x^v, E_\gamma\rangle_{q,q^c,n-i}=0.
\end{multline*}
Therefore $\CT\limits_x Q(d\Mid u^{(i)};k^{(i)})=0$. Now every summand $\CT\limits_x Q(d \Mid u^{(i)};k^{(i)})$ appearing on the right-hand side of ~\eqref{eq-mainprop-q(d)-i} is zero, and so is their finite sum $\CT\limits_x Q(d)$. This completes the proof.
\end{proof}

%% file: 6.proof2.tex
\section{Proof of Theorem~\ref{proposition-main-2-1} and Theorem~\ref{proposition-main-2-2}}\label{sec-proof2}
In this section, we prove Theorem~\ref{proposition-main-2-1} and Theorem~\ref{proposition-main-2-2}. In the first subsection, we establish Theorem~\ref{proposition-main-2-1}. In Subsection~\ref{subsec-proof2-2}, following an approach similar to that in the proof of Theorem~\ref{Proposition-main}, we introduce two rational functions associated with $F_{n,m}(a,b,c,H_\tau,\mu)$, and establish their key properties. Finally, in Subsection~\ref{subsec-proof2-3} we prove Theorem~\ref{proposition-main-2-2}.

\subsection{Polynomiality and Rationality}\label{subsec-proof2-1}
We first establish part (a) of Theorem~\ref{proposition-main-2-1}, which is the next lemma.

\begin{lem}[Polynomiality in $q^a$]\label{lem-F(H_tau,mu)-polynomial}
     Let $F_{n,m}(a,b,c,H_\tau,\mu)$ be defined in~\eqref{def-intro-F(mu,H)}. If $H_\tau$ is independent of $a$ and $x_0$, then for fixed nonnegative integers $b$ and $c$, the constant term $F_{n,m}(a,b,c,H_\tau,\mu)$ is a polynomial in $q^a$ of degree at most $nb+|\mu|-\tau$. 
\end{lem}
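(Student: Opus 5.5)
The plan is to reduce Lemma~\ref{lem-F(H_tau,mu)-polynomial} to the general polynomiality result, Lemma~\ref{lem-preliminary-ct-polynomiality}. The only obstruction is that the Macdonald polynomial $P_\mu$ in~\eqref{def-intro-F(mu,H)} depends on $a$ through the plethystic coefficient of $x_0$. So we first separate the $x_0$-part of the alphabet. Write
\[
Z:=\frac{q^{c-b}-q^a}{1-q^{c+1}}x_0+X_m+\frac{q-q^{c+1}}{1-q^{c+1}}X_{n\setminus m}.
\]
Expand $P_\mu[Z]$ in power sums, $P_\mu=\sum_{\rho\vdash|\mu|} a_\rho p_\rho$ with $a_\rho\in\mathbb{F}$ independent of $a$; here we use that $t=q^{c+1}$ and $c,b$ are fixed. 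Each $p_r[Z]$ equals
\[
\frac{q^{r(c-b)}-q^{ra}}{1-q^{r(c+1)}}x_0^r+p_r\Big[X_m+\frac{q-q^{c+1}}{1-q^{c+1}}X_{n\setminus m}\Big].
\]
Multiplying out $p_\rho[Z]$ therefore gives a finite sum
\[
P_\mu[Z]=\sum_{j=0}^{|\mu|} x_0^{j}\, g_j(q^a)\, L_j(x_1,\dots,x_n).
\]
Here $g_j$ is a polynomial in $q^a$ of degree at most $j$, since each factor $x_0^r$ comes with a coefficient of degree $r$ in $q^a$. Also, $L_j$ is a homogeneous polynomial in $x_1,\dots,x_n$ of degree $|\mu|-j$, independent of $a$ and $x_0$.

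Substituting this into~\eqref{def-intro-F(mu,H)}, we get
\[
F_{n,m}(a,b,c,H_\tau,\mu)=\sum_j g_j(q^a)\CT_x x_0^{\tau-|\mu|+j}\,L'_j\prod_{i=1}^n(x_0/x_i)_a(qx_i/x_0)_b,
\]
where $L'_j=H_\tau L_j \prod_{1\le i<j\le n}(x_i/x_j)_{c+\chi(j\le m)}(qx_j/x_i)_{c+\chi(j\le m)}$. The product over $1\le i<j\le n$ is homogeneous of degree $0$ and independent of $a$, $b$, $x_0$. Since $H_\tau$ has degree $-\tau$, the Laurent polynomial $L'_j$ is homogeneous of degree $-(\tau-|\mu|+j)$. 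Applying Lemma~\ref{lem-preliminary-ct-polynomiality} with $\tau\mapsto \tau-|\mu|+j$, each constant term is a polynomial in $q^a$ of degree at most $nb-\tau+|\mu|-j$, or vanishes if this number is negative. Multiplying by $g_j$, whose degree is at most $j$, gives total degree at most $nb+|\mu|-\tau$. Summing over $j$ proves the claim.

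The only point needing care is the bookkeeping showing that the degree $j$ in $q^a$ is matched exactly by the drop $j$ in the $q^a$-degree bound of the constant term. Lemma~\ref{lem-preliminary-ct-polynomiality} handles this automatically through the $x_0$-exponent. One must also check that $H_\tau$ and the cross products contain no $a$-dependence. This holds by hypothesis, and because $t=q^{c+1}$ involves only $c$.
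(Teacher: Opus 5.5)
Your argument is correct and is essentially the paper's proof: the paper expands $P_\mu$ into monomials $c_\alpha x_0^{\alpha_0}x_1^{\alpha_1}\cdots x_n^{\alpha_n}$, with $c_\alpha$ a polynomial in $q^a$ of degree at most $\alpha_0$, and then applies Lemma~\ref{lem-preliminary-ct-polynomiality} term by term with $x_0$-exponent $\tau-|\mu|+\alpha_0$, which is exactly your degree bookkeeping. Your power-sum expansion justifies the degree bound on the coefficients, which the paper only asserts; note only that the coefficient of $x_0^j$ is in general a finite sum of products $g(q^a)L(x)$ rather than the single product $g_j(q^a)L_j(x)$ you wrote, which changes nothing in the argument.
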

Note that by degree considerations, the constant term $F_{n,m}(a,b,c,H_\tau,\mu)$ vanishes whenever $nb+|\mu|-\tau<0$.
\begin{proof}
    By expanding the Macdonald polynomial $P_\mu$ in $F_{n,m}(a,b,c,H_\tau,\mu)$
    into a sum of monomials, we have
    \begin{equation}\label{e-cor-M-polynomial-1}
        P_\mu\left(\left[\cfrac{q^{c-b}-q^a}{1-q^{c+1}}x_0+X_m +\cfrac{q-q^{c+1}}{1-q^{c+1}}X_{n\setminus m}\right];q,q^{c+1}\right)=\sum_\alpha c_\alpha x_0^{\alpha_0}x_1^{\alpha_1}\cdots x_n^{\alpha_n},
    \end{equation}
    where $\alpha=(\alpha_0,\ldots,\alpha_n)\in\mathbb{N}^{n+1}$ ranges over all nonnegative integer sequences such that $|\alpha|=|\mu|$, and the coefficient $c_{\alpha}$ is a polynomial in $q^{a}$ of degree at most $\alpha_0$. Then, the constant term $F_{n,m}(a,b,c,H_\tau,\mu)$ can be expressed as 
    \begin{equation}\label{e-cor-M-polynomial-2}
        \sum_{\alpha} c_{\alpha}\CT_{x}x_0^{\tau-|\mu|+\alpha_0}L_\alpha(x_1,\dots,x_n)\prod_{i=1}^n (x_0/x_i)_a(qx_i/x_0)_b,
    \end{equation}
     where
    \begin{equation}\label{e-cor-M-polynomial-3}    L_\alpha(x_1,\dots,x_n)=x_1^{\alpha_1}\cdots x_n^{\alpha_n}
    H_\tau \prod_{1\leq i<j\leq n}(x_i/x_j)_{c+\chi(j\leq m)}(qx_j/x_i)_{c+\chi(j\leq m)}.
    \end{equation}
     Since each $L_\alpha(x_1,\dots,x_n)$ is independent of $a$ and $x_0$, and $\alpha_0\geq 0$, $\alpha_0-|\mu|\leq 0$, by Lemma \ref{lem-preliminary-ct-polynomiality} each summand in \eqref{e-cor-M-polynomial-2} is a polynomial in $q^a$ of degree at most $nb+|\mu|-\tau$, and so is the sum. 
\end{proof}

In the following lemma, we prove part (b) of Theorem~\ref{proposition-main-2-1}
\begin{lem}[Rationality in $q^{b}$]
     Let $F_{n,m}(a,b,c,H_\tau,\mu)$ be defined in~\eqref{def-intro-F(mu,H)}. If $H_\tau$ is independent of $b$ and $x_0$, then for fixed nonnegative integers $a$ and $c$, the constant term $F_{n,m}(a,b,c,H_\tau,\mu)$ is a rational function in $q^b$. 
\end{lem}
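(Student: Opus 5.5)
Since $a$ and $c$ are fixed, the plan is to mirror the proof of Lemma~\ref{lem-F(H_tau,mu)-polynomial} with the roles of $a$ and $b$ swapped. The first obstacle is that $P_\mu$ depends on $b$ through the coefficient $\frac{q^{c-b}-q^a}{1-q^{c+1}}$ of $x_0$. Expanding $P_\mu$ as in \eqref{e-cor-M-polynomial-1}, each coefficient $c_\alpha$ is a polynomial in $q^{c-b}$ of degree at most $\alpha_0$. For fixed $c$, this is a polynomial in $q^{-b}$, hence a rational function in $q^b$. So $F_{n,m}(a,b,c,H_\tau,\mu)$ is a finite sum, with index set independent of $b$, of $c_\alpha$ times
\[
\CT_x x_0^{\tau-|\mu|+\alpha_0}L_\alpha(x_1,\dots,x_n)\prod_{i=1}^n (x_0/x_i)_a(qx_i/x_0)_b,
\]
where $L_\alpha$ is as in \eqref{e-cor-M-polynomial-3}. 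It therefore suffices to show that each such constant term is a rational function in $q^b$.

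For this I would invoke the ``resp.'' half of Lemma~\ref{lem-preliminary-ct-polynomiality}, with $a$ fixed and $b$ varying. The factor $L_\alpha$ is independent of $b$ and $x_0$ because $H_\tau$ is independent of $b$ and $x_0$. It is a homogeneous Laurent polynomial in $x_1,\dots,x_n$ of degree $|\alpha|-\alpha_0-\tau=|\mu|-\alpha_0-\tau$. Writing this degree as $-\tau'$ with $\tau'=\tau-|\mu|+\alpha_0$, the power of $x_0$ is exactly $x_0^{\tau'}$, which matches the shape required by the lemma. The lemma then says the constant term is a polynomial in $q^b$ of degree at most $na+\tau'$ whenever $na+\tau'\ge0$, and it vanishes trivially otherwise.

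Summing the finitely many terms, each a rational function (in fact a Laurent polynomial) in $q^b$ times a polynomial in $q^b$, gives a rational function in $q^b$.

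The main point to check carefully is that the $b$-dependence of $P_\mu$ enters only through $c_\alpha$ and does not interfere with applying Lemma~\ref{lem-preliminary-ct-polynomiality}. It does not, because $P_\mu$ contributes only the monomial $x_0^{\alpha_0}x^{\alpha}$ and the scalar $c_\alpha$, and both are independent of the variables in the constant-term extraction once $b$ is factored into $c_\alpha$. If one prefers to avoid the ``resp.'' version of the lemma, an alternative is to extract $\CT_{x_0}$ directly. Using \eqref{e-qbinomialn}, $(qx_i/x_0)_b=\sum_k q^{\binom{k+1}{2}}\qbinom{b}{k}(-x_i/x_0)^k$, and each $\qbinom{b}{k}$ is a polynomial in $q^b$ for fixed $k$. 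Since $a$ is fixed, the powers of $x_0$ from $(x_0/x_i)_a$ are bounded, so only finitely many $k$, bounded independently of $b$, contribute. This again yields a polynomial in $q^b$.
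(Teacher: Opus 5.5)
Your proposal is correct and follows the paper's proof essentially verbatim. Like the paper, you expand $P_\mu$ so that all $b$-dependence sits in the coefficients $c_\alpha$, which are polynomials in $q^{-b}$; you note that $L_\alpha$ is independent of $b$ and $x_0$; and you apply the ``resp.''\ case of the polynomiality lemma to each of the finitely many summands. You also spell out the degree condition $na+\tau'\geq 0$ and the trivial vanishing when it fails, a detail the paper leaves implicit.
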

\begin{proof}
    Observe that for fixed $a$ and $c$, the coefficient $c_{\alpha}$ in \eqref{e-cor-M-polynomial-2} is a polynomial in $q^{-b}$. Since $H_\tau$ is independent of $b$ and $x_0$,  $L_\alpha(x_1,\dots,x_n)$ in \eqref{e-cor-M-polynomial-3} is independent of $b$ and $x_0$. By Lemma \ref{lem-preliminary-ct-polynomiality} the constant term in each summand in \eqref{e-cor-M-polynomial-2} is a polynomial in $q^b$. Hence, each summand in \eqref{e-cor-M-polynomial-2} is a rational function in $q^{b}$ and so is the sum. This completes the proof.
\end{proof}
We prove part (c) of Theorem~\ref{proposition-main-2-1} in the next lemma.
\begin{lem}[Rationality in $q^c$]\label{cor-F(H_tau,mu)-rationality}
For fixed nonnegative integers $a$ and $b$, let $F_{n,m}(a,b,c,H_\tau,\mu)$ be defined in~\eqref{def-intro-F(mu,H)}. Then
\[
\frac{(q)_c^n}{(q)_{nc}}\cdot F_{n,m}(a,b,c,H_\tau,\mu)
\]
is a rational function in $q^c$ and $q$.
\end{lem}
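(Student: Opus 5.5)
We reduce the statement to Proposition~\ref{prop-preliminary-stembridge-rationality} by expanding everything except the Morris-type product $\prod_{1\leq i<j\leq n}(x_i/x_j)_c(qx_j/x_i)_c$ into monomials. The difficulty is that several other factors carry exponents depending on $c$, so we must check that the resulting coefficients are rational in $q^c$.

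\textbf{Step 1: split the inhomogeneous factor.} First rewrite
\[
\prod_{1\leq i<j\leq n}(x_i/x_j)_{c+\chi(j\leq m)}(qx_j/x_i)_{c+\chi(j\leq m)}
=\prod_{1\leq i<j\leq m}(1-q^cx_i/x_j)(1-q^{c+1}x_j/x_i)\prod_{1\leq i<j\leq n}(x_i/x_j)_c(qx_j/x_i)_c .
\]
The extra factor is a Laurent polynomial whose coefficients are polynomials in $q^c$, and its number of monomials does not depend on $c$.

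\textbf{Step 2: expand the factors in $x_0$.} For fixed $a$ and $b$, the product $\prod_{i}(x_0/x_i)_a(qx_i/x_0)_b$ does not depend on $c$. Expand the Macdonald polynomial $P_\mu$ as in \eqref{e-cor-M-polynomial-1}. Its coefficients are rational in $q$ and $q^{c+1}$, because the coefficients of $P_\mu(\cdot;q,t)$ in the monomial basis are rational in $q$ and $t$, and the plethystic alphabet contains only the scalars $(q^{c-b}-q^a)/(1-q^{c+1})$, $1$ and $(q-q^{c+1})/(1-q^{c+1})$. The monomial support is finite and independent of $c$. Take the constant term in $x_0$; this is legitimate because everything is a Laurent polynomial. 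We get a finite sum, indexed independently of $c$, of terms
\[
r(q,q^c)\,\CT_x x^{\beta}H_\tau\prod_{1\leq i<j\leq n}(x_i/x_j)_c(qx_j/x_i)_c ,
\]
with $r$ rational.

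\textbf{Step 3: handle $H_\tau$ and apply rationality.} We need to assume that $H_\tau$ has coefficients $c_\alpha$ rational in $q^c$. This holds in the intended application \eqref{eq-sec2-specific-H-tau}, since $P_\lambda$ and $P_\xi$ with parameters $q^{c+1},q^c$ and the scalar $(1-q^c)/(1-q^{c+1})$ give coefficients rational in $q,q^c$. Expanding $H_\tau$ leaves a finite sum of monomial constant terms $\CT_x x^{\gamma}\prod_{i<j}(x_i/x_j)_c(qx_j/x_i)_c$ with $|\gamma|=0$. Proposition~\ref{prop-preliminary-stembridge-rationality} gives each of them as $\frac{(q)_{nc}}{(q)_c^n}R_n(q^c,q;\gamma)$. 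Multiplying by $(q)_c^n/(q)_{nc}$, the whole expression becomes a finite sum of rational functions in $q^c$ and $q$, which proves the claim.

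\textbf{Main obstacle.} The key point is uniformity: the index sets of all the expansions must be independent of $c$. This holds because the degrees and supports involved, namely $|\mu|$, $\tau$, $a$, $b$ and the finite extra factor from Step 1, are all fixed. Without this, the sum would not be a finite combination of rational functions of $q^c$.
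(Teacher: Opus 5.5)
Your proposal is correct and follows the paper's proof. As in the paper, you split off $\prod_{1\le i<j\le m}(1-q^cx_i/x_j)(1-q^{c+1}x_j/x_i)$, expand the remaining Laurent polynomial ($H_\tau$ and $P_\mu$) into finitely many monomials, take the constant term in $x_0$, and apply Proposition~\ref{prop-preliminary-stembridge-rationality} term by term. Your remarks that the monomial supports are independent of $c$, and that the coefficients of $H_\tau$ must be rational in $q$ and $q^c$ (as they are for \eqref{eq-sec2-specific-H-tau}), spell out what the paper leaves implicit in writing $c_\alpha\in\mathbb{F}$.
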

\begin{proof}
Rewrite $F_{n,m}(a,b,c,H_\tau,\mu)$ as
\begin{equation}\label{e-cor-F_-rational-c-1}
    \CT_x x_0^{\tau-|\mu|} S\times \prod_{i=1}^n (x_0/x_i)_a (qx_i/x_0)_b \prod_{1\leq i<j\leq n}(x_i/x_j)_c(qx_j/x_i)_c,
\end{equation}
where $S$ is
\[
\prod_{1\leq i<j\leq m} (1-q^c x_i/x_j)(1-q^{c+1}x_j/x_i) H_\tau(x) P_\mu\left(\left[\cfrac{q^{c-b}-q^a}{1-q^{c+1}}x_0+X_m +\cfrac{q-q^{c+1}}{1-q^{c+1}}X_{n\setminus m}\right];q,q^{c+1}\right).
\]
Observe that $S$ is a Laurent polynomial in $x_0,\ldots,x_n$ with coefficients in $\mathbb{F}$.
By expanding $S$ into a finite sum of monomials, and taking the constant term with respect to $x_0$ in~\eqref{e-cor-F_-rational-c-1}, we write
\begin{equation}
    F_{n,m}(a,b,c,H_\tau,\mu)=\sum_{\alpha} c_\alpha \CT_x x_1^{\alpha_1}\cdots x_n^{\alpha_n}\prod_{1\leq i<j\leq n}(x_i/x_j)_c(qx_j/x_i)_c,
\end{equation}
where the sum is finite, $\alpha\in\mathbb{Z}^n$, $|\alpha|=0$, and $c_\alpha\in\mathbb{F}$. The result follows by applying Proposition ~\ref{prop-preliminary-stembridge-rationality} to each summand above.
\end{proof}

\subsection{The two rational functions}\label{subsec-proof2-2} 
For nonnegative integers $d,b,c,\tau$, a partition $\mu$ such that $\ell(\mu)<m$, and a homogeneous Laurent polynomial $H_\tau=H_\tau(x)$ of the form~\eqref{def-intro-H_tau}, define
\begin{multline}\label{def-P(d)}
P(d)=P_{n,m}(d,b,c,H_\tau,\mu):= x_0^{\tau-|\mu|} P_\mu\left(\left[\cfrac{q^{c-b}-q^{-d}}{1-q^{c+1}}x_0+X_m+\cfrac{q-q^{c+1}}{1-q^{c+1}}X_{n\setminus m}\right];q,q^{c+1}\right)\\
\times H_\tau \prod_{i=1}^n \cfrac{(qx_i/x_0)_b}{(q^{-d}x_0/x_i)_d}\prod_{1\leq i<j\leq n} \left(x_i/x_j\right)_{c+\chi(j\leq m)} \left(qx_j/x_i\right)_{c+\chi(j\leq m)}.
\end{multline}
It is clear that the identity
\begin{equation}
    \CT_x P(-a)= F_{n,m}(a,b,c,H_\tau,\mu)
\end{equation}
holds for all integers $a$. Therefore, to complete the proof of Theorem~\ref{proposition-main-2-2}, it suffices to prove that when $c\geq b+\mu_1$ and $b\geq \tau+m$, the constant term $\CT\limits_x P(d)$ vanishes for $d\in B_2(\delta^m)\cup B_3(\mu)$. Here we recall that the sets $B_2$ and $B_3$ are defined in~\eqref{def-sets-B2} and~\eqref{def-sets-B3}, respectively.

For a positive integer $s$ such that $1\leq s\leq n$, let $k=k^{(s)} := (k_1, k_2, \dots, k_s)$ and $u=u^{(s)} :=(u_1, u_2, \dots, u_s)$ be two sequences of positive integers such that $0 < u_1 <
u_2 <\dots < u_s \leq n$ and $1\leq k_i\leq d$. Define
\begin{equation}\label{definition-mainthm-P(d|u;k)}
P(d\Mid u^{(s)};k^{(s)})=P(d\Mid u_1,\dots,u_s;k_1,\dots,k_s):=S_{u,k}
\bigg(P(d)\prod_{i=1}^{s}(1-\frac{x_{0}}{x_{u_{i}}q^{k_{i}}})\bigg),
\end{equation}
where $S_{u,k}$ is the substitution defined in~\eqref{def-E_uk}. Following the same argument as for $Q(d)$, when $nd>\tau$, by the Gessel--Xin operation we have
\begin{equation}\label{eq-mainthm-P(d)-sum}
\CT_{x}P(d)=\sum_{s\in T\subseteq \{1,\dots,n\}}\sum_{\substack{1\leq u_1<\cdots<u_s\leq n\\1\leq k_1,\dots,k_s\leq d}}
\CT_x P(d\Mid u_1,\dots,u_s;k_1,\dots,k_s),
\end{equation}
where each index $s$ in $P(d\Mid u^{(s)};k^{(s)})$ in the sum is maximal, in the sense that each summand cannot be written as a sum by Lemma~\ref{lem-preliminary-laruent-proper}. For a given $P(d\Mid u^{(s)};k^{(s)})$, we define an integer $r$ associated with $u^{(s)}$ by
\begin{equation}\label{definition-mainthm-r}
    r=r(u^{(s)}):=\Mid \{u_i:u_i\leq m,i=1,\ldots,s\} \Mid,
\end{equation}
where $|A|$ is the cardinality of the set $A$.
It is clear that
\begin{equation}\label{eq-mainthm-range-r}
\max\{s-n+m,0\}\leq r\leq \min\{s,m\}.
\end{equation}
\begin{prop}\label{prop-mainthm-P(d|u;k)-properties}
For an integer $s$ such that $1\leq s\leq n$, let $P(d\Mid u^{(s)};k^{(s)})$ and $r$ be defined in~\eqref{definition-mainthm-P(d|u;k)}  and ~\eqref{definition-mainthm-r}, respectively. The rational function $P(d\Mid u^{(s)};k^{(s)})$ has the following vanishing and recursive properties:
\begin{enumerate}
\item If $d\leq (s-1)c+b+\chi(s>n-m+1)(s-n+m-1)$, then $P(d\Mid u^{(s)};k^{(s)})=0$;

\item If $d>sc+\big((m-r)\cdot r+\tau\big)/(n-s)$ and $s\neq n$, then
\begin{equation}\label{prop-mainthm-P(d|u;k)-properties-propercase}
\CT_{x_{u_s}}P(d\Mid u^{(s)};k^{(s)})=
\begin{cases}\displaystyle
\sum_{\substack{u_s<u_{s+1}\leq n\\1\leq k_{s+1}\leq d}}
P(d\Mid u_1,\dots,u_s,u_{s+1};k_1,\dots,k_s,k_{s+1}) \quad &\text{for $u_s<n$},\\
0 \quad &\text{for $u_s=n$}.
\end{cases}
\end{equation}
\end{enumerate}
\end{prop}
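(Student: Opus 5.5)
The plan is to follow the proof of Proposition~\ref{prop-mainprop-Q(d|u;k)-properties} almost verbatim. The two differences are the extra unit shift in the exponents $c+\chi(j\leq m)$ among the first $m$ variables, and the replacement of the nonsymmetric Macdonald factor by $H_\tau$ and $P_\mu$.

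For part (1), note that the $k_i$ in $P(d\Mid u^{(s)};k^{(s)})$ satisfy $1\leq k_i\leq d\leq T_s$. I will apply Corollary~\ref{cor-preliminary-key-version2} with $r=r(u^{(s)})$; the range \eqref{eq-mainthm-range-r} gives exactly the hypothesis $s-n+m\leq r\leq\min\{s,m\}$. There are three resulting cases.
\begin{enumerate}
\item[(i)] If $1\leq k_i\leq b$, the factor $S_{u,k}(qx_{u_i}/x_0)_b=(q^{1-k_i})_b$ vanishes.
\item[(ii)] If $-c-1\leq k_i-k_j\leq c$ with $i<j\leq r$, then $u_i<u_j\leq m$. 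The pair therefore carries the exponent $c+1$, and $S_{u,k}\big[(x_{u_i}/x_{u_j})_{c+1}(qx_{u_j}/x_{u_i})_{c+1}\big]$ is, up to a nonzero monomial factor, $(q^{k_j-k_i-c-1})_{2c+2}$, which is $0$ in this range.
\item[(iii)] If $-c\leq k_i-k_j\leq c-1$ with $j>r$, then $u_j>m$, so the exponent is $c$, and the pair gives $(q^{k_j-k_i-c})_{2c}=0$, exactly as in the proof of Proposition~\ref{prop-mainprop-Q(d|u;k)-properties}(1).
\end{enumerate}
One point needs care. Since $P_\mu$ and $H_\tau$ are Laurent polynomials, the specialization $S_{u,k}$ of the numerator is finite, and the factors of $\prod_i(q^{-d}x_0/x_i)_d$ that would vanish under $S_{u,k}$ have been cancelled by the inserted product. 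The zero therefore really is a zero of the whole expression.

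For part (2), I will count degrees in $x_{u_s}$ after the substitution $S_{u,k}$, as in Proposition~\ref{prop-mainprop-Q(d|u;k)-properties}(2). The aim is to show that $P(d\Mid u^{(s)};k^{(s)})$ has the form \eqref{eq-preliminary-laruent-proper-1} in $x_{u_s}$ and then invoke Lemma~\ref{lem-preliminary-laruent-proper}.

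The denominator contributes $\prod_{i\notin U}(q^{k_s-d}x_{u_s}/x_i)_d$, of degree $(n-s)d$. Its roots $x_{u_s}=q^{d-k_s-l}x_i$ with $i\notin U$ give, after $x_{u_s}=x_{u_{s+1}}q^{k_{s+1}-k_s}$, exactly the new terms with $k_{s+1}\in\{1,\ldots,d\}$. Only the indices $u_{s+1}>u_s$ survive because of \eqref{eq-preliminary-laurent-ct}. This yields \eqref{prop-mainthm-P(d|u;k)-properties-propercase}, including the value $0$ when $u_s=n$.

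In the numerator, $x_{u_s}$ appears in several places, whose degrees I bound separately.
\begin{enumerate}
\item[(a)] The cross factors $\prod_{i\notin U}\prod_{j=1}^s(q^{\cdots}x_{u_s}/x_i)_{c+\chi(\max(i,u_j)\leq m)}$ have degree $(n-s)sc+(m-r)r$, since there are exactly $(m-r)r$ pairs with $i\notin U$, $i\leq m$ and $u_j\leq m$.
\item[(b)] The factor $x_0^{\tau-|\mu|}$ and $P_\mu$ together are homogeneous of degree $\tau$ in $x_0$ and the $x_i$. After $x_0=x_{u_s}q^{k_s}$ and $x_{u_i}\mapsto x_{u_s}q^{k_s-k_i}$, they contribute at most $\tau$. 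The reason is that $x_0^{\tau-|\mu|}\cdot(\text{monomial of }P_\mu)$ has $x_{u_s}$-degree at most $\tau-|\mu|+|\mu|$, because $P_\mu$ has total degree $|\mu|$.
\item[(c)] $H_\tau$ consists of nonpositive powers, so it only lowers the degree.
\item[(d)] $(qx_i/x_0)_b$ for $i\notin U$ has nonpositive degree in $x_{u_s}$.
\end{enumerate}
The numerator degree is therefore at most $(n-s)sc+(m-r)r+\tau$. Under the hypothesis $d>sc+((m-r)r+\tau)/(n-s)$ this is strictly less than $(n-s)d$, which is the properness needed for Lemma~\ref{lem-preliminary-laruent-proper}.

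The main obstacle I expect is the bookkeeping in (a) and (b): verifying carefully that the $P_\mu$ plus $x_0^{\tau-|\mu|}$ contribution is bounded by $\tau$ uniformly across all monomials, and that no further $x_{u_s}$-dependence hides in the factors between $u_i$ and $u_j$. Those factors become constants, just as the $V$ part did in \eqref{eq-mainprop-Q(d|r;k)-HVLform}.
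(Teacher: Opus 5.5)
Your proposal is correct and matches the paper's proof. Part (1) applies Corollary~\ref{cor-preliminary-key-version2} with $r=r(u^{(s)})$ to get a vanishing factor in each of the three cases. Part (2) compares the numerator degree bound $(n-s)sc+(m-r)r+\tau$ in $x_{u_s}$ with the denominator degree $(n-s)d$ and then applies Lemma~\ref{lem-preliminary-laruent-proper}; your items (b)--(d) only spell out what the paper asserts in a single line.
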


\begin{proof}
(1)
If $d\leq (s-1)c+b+\chi(s>n-m+1)(s-n+m-1)$, then for $i=1,\ldots,s$, we have
\[
1\leq k_i\leq d \leq (s-1)c+b+\chi(s>n-m+1)(s-n+m-1).
\]
By Corollary~\ref{cor-preliminary-key-version2} with $r$ taken as in~\eqref{definition-mainthm-r}, at least one of the cases (i)-(iii) of Corollary~\ref{cor-preliminary-key-version2} holds.

If (i) holds, then $P(d\Mid u^{(s)};k^{(s)})$ has the factor
\[
S_{u,k}\Big[\big(qx_{u_i}/x_0\big)_{b}\Big]=\big(q^{1-k_i}\big)_{b}=0.
\]

If (ii) holds, then $P(d\Mid u^{(s)};k^{(s)})$ has the factor
\[
S_{u,k}\Big[\big(x_{u_i}/x_{u_j}\big)_{c+1}\big(qx_{u_j}/x_{u_i}\big)_{c+1}\Big],
\]
which is equal to
\[
S_{u,k}\Big[q^{\binom{c+2}{2}}(-x_{u_j}/x_{u_i})^{c+1}\big(q^{-c-1}x_{u_i}/x_{u_j}\big)_{2c+2}\Big]
=q^{\binom{c+2}{2}}(-q^{k_i-k_j})^{c+1}\big(q^{k_j-k_i-c-1}\big)_{2c+2}=0.
\]

If (iii) holds, then $P(d\Mid u^{(s)};k^{(s)})$ has the factor
\[
S_{u,k}\Big[\big(x_{u_i}/x_{u_j}\big)_c\big(qx_{u_j}/x_{u_i}\big)_c\Big],
\]
which is equal to
\[
S_{u,k}\Big[q^{\binom{c+1}{2}}(-x_{u_j}/x_{u_i})^c\big(q^{-c}x_{u_i}/x_{u_j}\big)_{2c}\Big]
=q^{\binom{c+1}{2}}(-q^{k_i-k_j})^c\big(q^{k_j-k_i-c}\big)_{2c}=0.
\]
Therefore, we can conclude that $P(d\Mid u^{(s)};k^{(s)})=0$, as it always contains a zero factor.

(2)
We first show that $P(d\Mid u^{(s)};k^{(s)})$ is of the form \eqref{eq-preliminary-laruent-proper-1} when $d>sc+((m-r)r+\tau)/(n-s)$ and $s\neq n$. Let $U=\{u_1,u_2,\dots,u_s\}$. The part contributing to the degree in $x_{u_s}$ of the numerator of $P(d\Mid u^{(s)};k^{(s)})$ is
\begin{multline}
S_{u,k}\left[ x_0^{\tau-|\mu|}  H_\tau P_\mu\left(\left[\cfrac{q^{c-b}-q^{-d}}{1-q^{c+1}}x_0+X_m+\cfrac{q-q^{c+1}}{1-q^{c+1}}X_{n\setminus m}\right];q,q^{c+1}\right) \right]\\
\times \prod_{\substack{i=1\\ i\notin U}}^n\bigg(\prod_{j=1}^r
\big(q^{k_s-k_j+\chi(u_j>i)}x_{u_s}/x_i\big)_{c+\chi(i\leq m)}
\prod_{j=r+1}^s\big(q^{k_s-k_j+\chi(u_j>i)}x_{u_s}/x_i\big)_c \bigg),
\end{multline}
which is a Laurent polynomial in $x_{u_s}$ of degree at most $(n-s)sc+(m-r)r+\tau$.
The part contributing to the degree in $x_{u_s}$ of the denominator of $P(d\Mid u^{(s)};k^{(s)})$ is
\[
\prod_{\substack{i=1\\ i\notin U}}^n
\big(q^{k_s-d}x_{u_s}/x_i\big)_d,
\]
which has degree $(n-s)d$. Then $P(d\Mid u^{(s)};k^{(s)})$ is of the form \eqref{eq-preliminary-laruent-proper-1} if $d>sc+((m-r)\cdot r+\tau)/(n-s)$. Applying Lemma~\ref{lem-preliminary-laruent-proper} gives
\[
\CT_{x_{u_s}}P(d\Mid u^{(s)};k^{(s)})=
\begin{cases}\displaystyle
\sum_{\substack{u_s<u_{s+1}\leq n\\1\leq k_{s+1}\leq d}}
P(d\Mid u_1,\dots,u_s,u_{s+1};k_1,\dots,k_s,k_{s+1}) \quad &\text{for $u_s<n$,}\\
0 \quad &\text{for $u_s=n$.}
\end{cases}
\]
This completes the proof.
\end{proof}

\begin{cor}\label{cor-mainthm-P-i}
Assume $c\geq b+\mu_1$ and $b\geq \tau+m$. If there exists an $i\in\{n-m+1,\ldots,n-1\}$ such that
\(
ic+b+1 \leq d \leq ic+b+i-n+m+\mu_{n-i},
\)
then
    \begin{equation}\label{eq-mainprop-P(d)-i}
    \CT_{x}P(d)=\sum_{\substack{1\leq u_1<\cdots<u_{i+1}\leq n\\1\leq k_1,\dots,k_{i+1}\leq d}}
    \CT_x P(d\Mid u^{(i+1)};k^{(i+1)}).
    \end{equation}
\end{cor}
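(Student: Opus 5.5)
The argument mirrors Corollary~\ref{cor-mainprop-Q(d)-i}. First I check that the expansion~\eqref{eq-mainthm-P(d)-sum} is available, i.e.\ $nd>\tau$. Since $i\geq n-m+1\geq 1$ and $d\geq ic+b+1>b\geq \tau+m$, we get $nd>\tau$. Hence $\CT_x P(d)$ is a sum of terms $\CT_x P(d\Mid u^{(s)};k^{(s)})$, each with maximal index $s$. It then suffices to show that every such nonzero term has $s=i+1$. I rule out $s\geq i+2$ by the vanishing part (1) of Proposition~\ref{prop-mainthm-P(d|u;k)-properties}, and $s\leq i$ by the recursive part (2).

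\emph{Case $s\geq i+2$.} Here $s\geq n-m+3>n-m+1$, so
\[
T_s=(s-1)c+b+(s-n+m-1)\geq (i+1)c+b+(i-n+m+1).
\]
On the other hand $d\leq ic+b+(i-n+m)+\mu_{n-i}$. Since $\mu_{n-i}\leq\mu_1\leq c-b\leq c$, we have $d<(i+1)c+b+(i-n+m+1)\leq T_s$. By Proposition~\ref{prop-mainthm-P(d|u;k)-properties}(1), $P(d\Mid u^{(s)};k^{(s)})=0$.

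\emph{Case $s\leq i$.} Here $s\leq n-1$, so $s\neq n$. By the maximality of $s$, it is enough to verify $d>sc+((m-r)r+\tau)/(n-s)$; Proposition~\ref{prop-mainthm-P(d|u;k)-properties}(2) then shows the term is either $0$ or expands further, a contradiction. We have $d\geq ic+b+1\geq sc+b+1$. Also $(m-r)r\leq m^2/4$, and by~\eqref{eq-mainthm-range-r} we have $n-s\geq 1$.

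This is the main obstacle: I need $((m-r)r+\tau)/(n-s)<b+1$. This is not immediate from $b\geq\tau+m$, because $(m-r)r$ can be as large as about $m^2/4$.

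To handle it, I split on $s$.
\begin{itemize}
\item If $s\leq i-1$, then $d\geq (s+1)c+b+1$. Since $c\geq b\geq \tau+m$, we get $c\geq\tau+m$. Also $(m-r)r\leq m(m-1)/4$ up to rounding, and $n-s\geq 2$. These give $((m-r)r+\tau)/(n-s)<c+b+1$, which suffices.
\item If $s=i$, the margin is only $b+1$, so I use the constraint $r\geq s-n+m=i-n+m\geq 1$ together with $n-s=n-i$. Writing $j=n-i\geq 1$, we have $r\geq m-j$, hence $m-r\leq j$, and so $(m-r)r\leq jm$. Therefore
\[
\frac{(m-r)r+\tau}{n-s}\leq \frac{jm+\tau}{j}\leq m+\tau\leq b,
\]
so $d\geq sc+b+1$ exceeds the bound.
\end{itemize}

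This bound $(m-r)r\leq (n-s)m$, coming from $r\geq s-n+m$, is the key inequality. I would also try it first in the case $s\leq i-1$, since it gives a uniform argument there as well. Once both cases are excluded, only $s=i+1$ survives, and this yields~\eqref{eq-mainprop-P(d)-i}.
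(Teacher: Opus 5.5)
Your argument is correct and follows the paper's proof. The condition $nd>\tau$ gives the expansion, terms with $s\ge i+2$ vanish by the vanishing part (1) of the proposition, and terms with $s\le i$ are excluded by the recursive part (2). Your bound $(m-r)r\le (n-s)m$, obtained from $m-r\le n-s$, is exactly what justifies the paper's one-line claim $ic+b+1>sc+((m-r)r+\tau)/(n-s)$ ``since $b\ge\tau+m$''.

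Apply that bound uniformly to every $s\le i$; it needs only $d\ge sc+b+1$. Drop the crude estimate in your $s\le i-1$ bullet, because it does not work as stated. For $\tau=0$, $\mu=\emptyset$, $b=c=m$, it only bounds the quotient by $\lfloor m^2/4\rfloor/2$, and this exceeds $c+b+1=2m+1$ once $m\ge 17$.
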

\begin{proof}
Under the given conditions, we have 
\(
   nd> nc\geq nb\geq \tau,
\)
and thus the expansion~\eqref{eq-mainthm-P(d)-sum} holds. To complete the proof, we only need to show that the index $s$ in each nonzero term $P(d\Mid u^{(s)};k^{(s)})$ in~\eqref{eq-mainthm-P(d)-sum} can only be $i+1$.

If $s\geq i+2$ then $s\geq n-m+3$. Since $c\geq \mu_1$, we have 
\[
d\leq ic+b+i-n+m+\mu_{n-i}\leq (s-1)c+b+(s-n+m-1).
\]
By the case (1) of Proposition~\ref{prop-mainthm-P(d|u;k)-properties}, we have $P(d\Mid u^{(s)};k^{(s)})=0$.
If $s\leq i$, since $b\geq \tau+m$, we have
\[
    d \geq ic+b+1>sc+\cfrac{(m-r)r+\tau}{n-s}.
\]
It is clear that $s\neq n$ since $s\leq i\leq n-1$. By the case (2) of Proposition~\ref{prop-mainthm-P(d|u;k)-properties}, the summand $\CT\limits_x P(d\Mid u^{(s)};k^{(s)})$ is either zero or can be written as a sum (against that $s$ is maximal). This completes the proof.
\end{proof}

\subsection{Proof of Theorem~\ref{proposition-main-2-2}}\label{subsec-proof2-3}
We complete the proof of Theorem~\ref{proposition-main-2-2} by showing that when $c\geq b+\mu_1$ and $b\geq \tau+m$, 
\[
\CT_x P(d)=0\quad \text{ for $d\in B_2(\delta^m)\cup B_3(\mu)$}.
\]
Here we recall that
\[
B_2(\delta^m)=\bigcup_{i=n-m+1}^{n-1}\{ic+b+1,\ldots,ic+b+\delta^m_{n-i}\},
\]
and
\[
B_3(\mu)=\bigcup_{i=n-m+1}^{n-1}\{ic+b+\delta^m_{n-i}+1,\ldots,ic+b+\delta^m_{n-i}+\mu_{n-i}\}.
\]
\begin{lem}\label{lem-proof2-3}
     Assume $c\geq b+\mu_1$ and $b\geq \tau+m$. The constant term $\CT\limits_x P(d)$ vanishes for $d \in B_2(\delta^m)$.
\end{lem}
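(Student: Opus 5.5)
Fix $i\in\{n-m+1,\ldots,n-1\}$ and $d$ with $ic+b+1\le d\le ic+b+\delta^m_{n-i}=ic+b+(i-n+m)$; note $\delta^m_{n-i}=m-(n-i)=i-n+m$. Since $\delta^m_{n-i}\le i-n+m+\mu_{n-i}$, Corollary~\ref{cor-mainthm-P-i} applies. Hence $\CT_x P(d)$ is the sum of $\CT_x P(d\Mid u^{(i+1)};k^{(i+1)})$, and it suffices to show that each such summand vanishes. Write $s=i+1$, so $s>n-m+1$ and $T_s=ic+b+(s-n+m-1)=ic+b+(i-n+m)$. Thus $d\le T_s$, i.e.\ $d=T_s+\mathfrak{e}$ with $\mathfrak{e}\le 0$. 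In particular $1\le k_j\le T_s$ for all $j$.

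The plan is to apply Lemma~\ref{lem-preliminary-key} with $e=s-n+m-1$ and $r=r(u^{(s)})$. If (i), (ii) or (iii) of that lemma holds, the summand vanishes. The reason is the same factor argument as in case (1) of Proposition~\ref{prop-mainthm-P(d|u;k)-properties}: a factor $(q^{1-k_j})_b$, $(q^{k_j-k_l-c-1})_{2c+2}$ or $(q^{k_j-k_l-c})_{2c}$ becomes zero. Corollary~\ref{cor-preliminary-key-version2} already tells us that only (i)--(iii) can occur when $d\le T_s$ and $s-n+m\le r$. By \eqref{eq-mainthm-range-r} we have $r\ge s-n+m$ automatically, so the bound $k_j\le T_s$ gives $P(d\Mid u^{(s)};k^{(s)})=0$ directly.

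Actually this is stronger than needed: it already shows every term with $s=i+1$ vanishes for $d\le T_s$. The only care point is that Corollary~\ref{cor-preliminary-key-version2} requires $k_j\le (s-1)c+b+\chi(s>n-m+1)(s-n+m-1)$, not merely $k_j\le d$. Since $k_j\le d\le T_s$, this holds.

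So the whole statement reduces to combining Corollary~\ref{cor-mainthm-P-i} with case (1) of Proposition~\ref{prop-mainthm-P(d|u;k)-properties} applied at $s=i+1$. The inequality to check is $d\le ic+b+\chi(i+1>n-m+1)(i+1-n+m-1)=ic+b+(i-n+m)$, which is exactly the upper end of the $i$-th block of $B_2(\delta^m)$. The main (mild) obstacle is bookkeeping the hypotheses of Corollary~\ref{cor-mainthm-P-i}, namely $c\ge b+\mu_1$ and $b\ge\tau+m$, which are assumed. One must also confirm that the index range $i\ge n-m+1$ gives $\chi(s>n-m+1)=1$. After that, every nonzero summand is excluded and $\CT_x P(d)=0$.
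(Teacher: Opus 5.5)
Your proposal is correct and is essentially the paper's own proof. Corollary~\ref{cor-mainthm-P-i} reduces $\CT_x P(d)$ to the summands with $s=i+1$, and each of these vanishes by case~(1) of Proposition~\ref{prop-mainthm-P(d|u;k)-properties}, since $d\le ic+b+(i-n+m)=T_{i+1}$. Your detour through Lemma~\ref{lem-preliminary-key} with $\mathfrak{e}\le 0$ is unnecessary, because Corollary~\ref{cor-preliminary-key-version2} (which underlies case~(1)) already covers it, but it does no harm.
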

\begin{proof}
    Fix $i\in \{n-m+1,\ldots,n-1\}$ and an integer $d$ such that
    \[
    ic+b+1\leq d\leq ic+b+\delta^m_{n-i}=ic+b+i-n+m.
    \]
    By Corollary~\ref{cor-mainthm-P-i}
    \begin{equation}\label{eq-mainthm-P(d)-i-1}
    \CT_{x}P(d)=\sum_{\substack{1\leq u_1<\cdots<u_{i+1}\leq n\\1\leq k_1,\dots,k_{i+1}\leq d}}
    \CT_x P(d\Mid u^{(i+1)};k^{(i+1)}).
    \end{equation}
    Since
    \(
    d\leq ic+b+i-n+m,
    \)
    every summand $\CT\limits_x P(d\Mid u^{(i+1)};k^{(i+1)})$ in~\eqref{eq-mainthm-P(d)-i-1} vanishes by the case (1) of Proposition~\ref{prop-mainthm-P(d|u;k)-properties} with $s=i+1$. Hence, $\CT\limits_x P(d)=0$.
\end{proof}

\begin{lem}\label{lem-proof2-4}
    Assume $c\geq b+\mu_1$ and $b\geq \tau+m$. The constant term $\CT\limits_x P(d)$ vanishes for $d\in B_3(\mu)$. 
\end{lem}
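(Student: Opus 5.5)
Fix $i\in\{n-m+1,\ldots,n-1\}$ and $d$ with $ic+b+\delta^m_{n-i}+1\leq d\leq ic+b+\delta^m_{n-i}+\mu_{n-i}$, so that $d=T_{i+1}+\mathfrak{e}$ with $T_{i+1}=ic+b+(i-n+m)$ and $1\leq \mathfrak{e}\leq \mu_{n-i}$. (If $\mu_{n-i}=0$ there is nothing to prove.) By Corollary~\ref{cor-mainthm-P-i} we again have
\[
\CT_x P(d)=\sum_{\substack{1\leq u_1<\cdots<u_{i+1}\leq n\\1\leq k_1,\dots,k_{i+1}\leq d}}\CT_x P(d\Mid u^{(i+1)};k^{(i+1)}),
\]
so it suffices to show that each summand vanishes. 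Put $s=i+1$, which satisfies $n-m+2\leq s\leq n$, and let $r=r(u^{(s)})$ be as in~\eqref{definition-mainthm-r}. Since $1\leq k_j\leq T_s+\mathfrak{e}=(s-1)c+b+e$ with $e=(s-n+m-1)+\mathfrak{e}$, Lemma~\ref{lem-preliminary-key} applies with this $s$, $e$ and $r$.

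If one of cases (i)--(iii) of Lemma~\ref{lem-preliminary-key} holds, the summand has a zero factor, exactly as in the proof of part (1) of Proposition~\ref{prop-mainthm-P(d|u;k)-properties}. These are the factor $(q^{1-k_j})_b$, the factor coming from $(x_{u_i}/x_{u_j})_{c+1}(qx_{u_j}/x_{u_i})_{c+1}$ for $i<j\leq r$, and the factor coming from the $c$-products when $j>r$. In the remaining case (iv), Lemma~\ref{lem-preliminary-cardi-P_mu} applies with this $s,U,\mathfrak{e}$. Under $S_{u,k}$ the alphabet inside $P_\mu$ in $P(d)$, which is exactly~\eqref{eq-preliminary-cardi} at $-a=d$, becomes
\[
\frac{1-q}{q^{c+1}-1}(\alpha_1+\cdots+\alpha_{\mathfrak{e}-1})+\beta_1+\cdots+\beta_{n-s}.
\]
With $t=q^{c+1}$ this is precisely the shape in Proposition~\ref{prop-Mac-vanish}, applied with the index $n-i=n-s+1$. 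The needed inequalities are $\mathfrak{e}-1\leq \mu_{n-s+1}-1$, i.e.\ $\mathfrak{e}\leq\mu_{n-i}$, and $n-s=(n-s+1)-1$ letters $\beta$. Adding extra letters $\alpha$ is not allowed in general, so I will check that the count is exactly $\mathfrak{e}-1$ and at most $\mu_{n-i}-1$, which holds here, since the proposition tolerates fewer $\alpha$'s by specializing some to $0$. One also has to handle the possibility $\mathfrak{e}-1<\mu_{n-i}-1$; this is harmless, because setting some $\alpha$'s equal to $0$ is allowed as they are monomials we may specialize. Hence $S_{u,k}P_\mu[\cdots]=0$, so $P(d\Mid u^{(s)};k^{(s)})=0$ identically as a rational function, and its constant term vanishes.

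The main subtlety I expect is bookkeeping rather than an argument. First, the $P_\mu$ factor in $P(d)$ is untouched by the multiplier $\prod(1-x_0/(x_{u_i}q^{k_i}))$, so substituting into it is legitimate and commutes with the other factors. Second, $H_\tau$ and the remaining factors are finite under $S_{u,k}$, so that a zero factor really kills the term. Third, the sign convention $-\frac{1-q}{1-q^{c+1}}=\frac{1-q}{t-1}$ matches Proposition~\ref{prop-Mac-vanish}. Combining the cases shows every summand vanishes, hence $\CT_x P(d)=0$ for $d\in B_3(\mu)$.
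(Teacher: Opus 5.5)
Your proposal is correct and follows the paper's proof essentially step for step. You reduce to $s=i+1$ via Corollary~\ref{cor-mainthm-P-i}, dispose of cases (i)--(iii) of Lemma~\ref{lem-preliminary-key} through the zero factors, and in case (iv) combine Lemma~\ref{lem-preliminary-cardi-P_mu} with Proposition~\ref{prop-Mac-vanish} at index $n-i$.

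Your remark about the case $\mathfrak{e}<\mu_{n-i}$ makes explicit a point the paper leaves implicit. Padding with extra letters $\alpha_j$ specialized to $0$ is legitimate, because the vanishing in Proposition~\ref{prop-Mac-vanish} holds identically in the letters.
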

\begin{proof}
Fix $i\in \{n-m+1,\ldots,n-1\}$ and an integer $d$ such that 
\[
ic+b+i-n+m+1\leq d \leq ic+b+i-n+m+\mu_{n-i}.
\]
By Corollary~\ref{cor-mainthm-P-i}
\begin{equation}\label{eq-mainthm-P(d)-i-2}
    \CT_{x}P(d)=\sum_{\substack{1\leq u_1<\cdots<u_{i+1}\leq n\\1\leq k_1,\dots,k_{i+1}\leq d}}
    \CT_x P(d\Mid u^{(i+1)};k^{(i+1)}).
\end{equation}
To show $\CT\limits_x P(d)=0$, it suffices to show that each summand $\CT\limits_x P(d\Mid u^{(i+1)};k^{(i+1)})$ in~\eqref{eq-mainthm-P(d)-i-2} vanishes.

For a given rational function $P(d\Mid u^{(i+1)};k^{(i+1)})$, let $d=ic+b+i-n+m+\mathfrak{e}$ for a positive integer $\mathfrak{e}$ such that $1\leq \mathfrak{e} \leq \mu_{n-i}$. Since 
\[
1\leq k_j\leq d=ic+b+i-n+m+\mathfrak{e}, 
\]
for $j=1,\ldots,i+1$, by Lemma~\ref{lem-preliminary-key} with $s=i+1$, $e=i-n+m+\mathfrak{e}$, and $r$ taken as in~\eqref{definition-mainthm-r}, the $k_j$ can only fall into four cases. If one of the cases (i)-(iii) of Lemma~\ref{lem-preliminary-key} holds, then $P(d\Mid u^{(i+1)};k^{(i+1)})=0$ by the same argument as that in the part (1) of the proof of Proposition~\ref{prop-mainthm-P(d|u;k)-properties}. We then proceed with the proof by assuming that the case (iv) of Lemma~\ref{lem-preliminary-key} holds. In this case, by Lemma~\ref{lem-preliminary-cardi-P_mu} with $s=i+1$, $P(d\Mid u^{(i+1)};k^{(i+1)})$ contains a factor
\begin{align*}
    & S_{u,k} \left[P_\mu^{(q,q^{c+1})}\Big[\cfrac{q^{c-b}-q^{-d}}{1-q^{c+1}}x_0+X_m+\cfrac{q-q^{c+1}}{1-q^{c+1}}X_{n\setminus m}\Big]\right]\\
    & = P_\mu^{(q,q^{c+1})}\Big[ -\cfrac{1-q}{1-q^{c+1}}\big(\alpha_1+\cdots+\alpha_{\mathfrak{e}-1}\big)+\big(\beta_1+\cdots+\beta_{n-i-1}) \Big],
\end{align*}
where each $\alpha_k$ is of the form $x_i q^j$, and each $\beta_k$ is a single-letter alphabet. Since $\mathfrak{e}-1\leq \mu_{n-i}-1$, by Proposition~\ref{prop-Mac-vanish} with $i\mapsto n-i$, we have
\[
P_\mu^{(q,q^{c+1})}\Big[ -\cfrac{1-q}{1-q^{c+1}}\big(\alpha_1+\cdots+\alpha_{\mathfrak{e}-1}\big)+\big(\beta_1+\cdots+\beta_{n-i-1}) \Big]=0.
\]
Therefore, each $P(d\Mid u^{(i+1)};k^{(i+1)})=0$ as it contains a zero factor in all cases. This completes the proof.
\end{proof}

Theorem~\ref{proposition-main-2-2} follows directly from Lemma~\ref{lem-proof2-3} and Lemma~\ref{lem-proof2-4}.

%% file: 8.proof4.tex
\section{Proof of The case (1) of Theorem~\ref{thm-nonmac-factorization}}\label{sec-proof4}

In this section, we give a proof of the case (1) of Theorem~\ref{thm-nonmac-factorization} by Theorem~\ref{proposition-main-2-1}, Theorem~\ref{proposition-main-2-2} and Theorem~\ref{Proposition-main}. 

Note that in this paper we always assume $t=q^c$. Moreover, by~\eqref{eq-nonmac-T_iaction} and~\eqref{def-nonmac-prescibed-partial-sym-antisym}, the polynomial $\mathcal{A}_{t}^{(X_m)}\mathcal{S}_{t}^{(X_{n\setminus m})} E_{(\lambda+\delta^m,0^{n-m})}(x;q,t)$ equals $\mathcal{A}_{t}^{(X_m)} E_{(\lambda+\delta^m,0^{n-m})}(x;q,t)$ up to a constant multiple. Hence, we restate the case (1) of Theorem~\ref{thm-nonmac-factorization} as the following proposition.

\begin{prop}\label{prop-proof4-1}
For a partition $\lambda$ such that $\ell(\lambda)<m$, we have
\[
\mathcal{A}_{q^c}^{(X_m)}E_{(\lambda+\delta^m,0^{n-m})}(x;q,q^c)=c_\lambda \prod_{1\leq i<j\leq m}(q^c x_i-x_j)P_\lambda\left( \left[X_m +\cfrac{q-q^{c+1}}{1-q^{c+1}}X_{n\setminus m}  \right];q,q^{c+1}\right),
\]
where $c_\lambda\in\mathbb{F}$. 
\end{prop}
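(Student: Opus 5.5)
Set $t=q^c$, $\eta=(\lambda+\delta^m,0^{n-m})$ and $G:=\mathcal{A}_t^{(X_m)}E_\eta(x;q,t)$. The idea is to show that $G/\Delta_t(X_m)$ and the plethystically evaluated $P_\lambda$ are both symmetric in $X_m$ and in $X_{n\setminus m}$, and that both are characterized, up to a scalar, by the same orthogonality inside the space $\mathcal{V}$ of polynomials of the form $\Delta_t(X_m)f$, with $f$ symmetric in $X_m$ and in $X_{n\setminus m}$ and of degree $|\lambda|$. As the section heading indicates, the orthogonality on the $P_\lambda$ side will come from the constant-term results Theorem~\ref{proposition-main-2-1}, Theorem~\ref{proposition-main-2-2} and Theorem~\ref{Proposition-main}.

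\textbf{Structure of $G$.} Since $\mathcal{A}_t^{(X_m)}$ produces $t$-antisymmetric polynomials in $X_m$, we have $G=\Delta_t(X_m)g$ with $g$ symmetric in $X_m$. Because $\eta_{m+1}=\cdots=\eta_n=0$, each $T_i$ with $i>m$ acts on $E_\eta$ by $t$ by \eqref{eq-nonmac-T_iaction}. These $T_i$ commute with $\mathcal{A}_t^{(X_m)}$, so $g$ is also symmetric in $X_{n\setminus m}$. By Remark~\ref{rem-nonmac}, $G$ lies in the span of the $E_\gamma$ with $\gamma^+=\eta^+$. Orthogonality \eqref{eq-nonmac-ortho} then shows that $G$ is $\langle\cdot,\cdot\rangle_{n}$-orthogonal to every $E_\gamma$ with $\gamma^+\neq\eta^+$. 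Combining this with triangularity \eqref{eq-nonmac-uppertriangular}, $G$ is characterized up to a scalar as the element of $\mathcal{V}$ whose leading term is governed by $\lambda$ and which is orthogonal to $\Delta_t(X_m)f$ for all lower $f$ in $\mathcal{V}$.

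\textbf{Right-hand side.} The polynomial $\Delta_t(X_m)P_\lambda[X_m+\tfrac{q-q^{c+1}}{1-q^{c+1}}X_{n\setminus m}]$ also lies in $\mathcal{V}$, and its leading behaviour matches that of $G$. It remains to prove the matching orthogonality. Here I would use the constant terms: take $F_{n,m}(a,b,c,H_\tau,\mu)$ with $H_\tau$ built from lower functions, and specialize $a=b=0$ after clearing $x_0$. The $q$-Baker--Forrester weight $F_{n,m}(x;0,0,c)$ equals the $t=q^c$ weight $W$ times $\prod_{i<j\le m}(1-q^cx_i/x_j)(1-q^{c+1}x_j/x_i)$, which accounts for $\Delta_t(X_m)$ against its inverse counterpart, as in \eqref{eq-F-to-M_n}. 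The constant term of $P_\mu[\cdot]$ against lower test functions must therefore vanish for $\mu\neq\lambda$, $|\mu|=|\lambda|$. I would obtain this vanishing from the polynomial root count. For $H_\tau$ coming from $E_\nu$ with $\nu^+<\eta^+$, Theorem~\ref{Proposition-main} and Theorem~\ref{proposition-main-2-2} give $nb+|\mu|-\tau+1$ roots in $q^a$ when $\tau=|\mu|$ and $\nu^+$ is not dominated appropriately. This exceeds the degree bound of Theorem~\ref{proposition-main-2-1}(a), so the polynomial vanishes identically, for all $a$. Rationality in $q^b$ and $q^c$, via Lemma~\ref{lem-preliminary-a,b,c}, then removes the constraints $c\ge b+\mu_1$ and $b\ge\tau+m$, and we may specialize $a=b=0$.

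\textbf{Main obstacle.} The hardest step is bookkeeping the root count. One must check that $B_1(\nu^+)\cup B_2(\delta^m)\cup B_3(\mu)$ is disjoint and has more than $nb+|\mu|-\tau$ elements precisely when the dominance relation between $\nu^+$ and $(\mu+\delta^m)$ fails. One must also check that this yields exactly the triangular orthogonality needed. Granting that, comparing the two characterizations gives $G=c_\lambda\Delta_t(X_m)P_\lambda[\cdots]$. Finally, $c_\lambda\neq0$ because $G\neq0$, since $\mathcal{A}_t^{(X_m)}$ is nonzero on $E_\eta$ when the first $m$ entries of $\eta$ are distinct.
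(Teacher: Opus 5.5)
The step that is supposed to produce the orthogonality fails. With $H_\tau$ built from $\mathcal{A}_{q^{-c}}^{(X_m)}E_\nu(x^{-1};q^{-1},q^{-c})$ one has $\tau=|\nu|-\binom{m}{2}$. The roots supplied by Theorem~\ref{Proposition-main} and Theorem~\ref{proposition-main-2-2} therefore number at most
\[
|B_1(\nu^+)|+|B_2(\delta^m)|+|B_3(\mu)|=(nb-|\nu|)+\binom{m}{2}+|\mu|=nb+|\mu|-\tau .
\]
This is \emph{exactly} the degree bound of Theorem~\ref{proposition-main-2-1}(a), for every $\nu$, whatever the relation between $\nu^+$ and $\mu+\delta^m$. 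You never get $nb+|\mu|-\tau+1$ roots.

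No extra root can be found elsewhere either, because for $b\geq 1$ the constant term is genuinely nonzero. Take $n=m=2$, $\mu=\lambda=(2)$, $\nu=(2,1)$. Then $H_2\propto (x_1x_2)^{-1}$, so your constant term is a multiple of $A_2(a,b,c+1,(1,1),(2))$. By \eqref{eq-intro-AFLT} this carries the factor $(q^{b-c-1})_2(q^b)_2$. It is a nonzero polynomial in $q^a$ whenever $b\geq 1$ and $c\geq b+1$, and it vanishes only at $b=0$. So the orthogonality you need is a $b=0$ phenomenon. The scheme ``vanish identically for $c\geq b+\mu_1$, $b\geq \tau+m$, then extend by Lemma~\ref{lem-preliminary-a,b,c}'' cannot prove it.

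The missing idea is to exploit the exact count rather than an excess. The roots determine the polynomial in $q^a$ up to a scalar, so the paper normalizes at the extra point $a=0$. It proves (Proposition~\ref{prop-prooflem-T_R}) that if the pairing $T_{n,m}(0,0,c,\nu,\lambda)$ is nonzero, then
\[
\frac{T_{n,m}(a,b,c,\nu,\lambda)}{T_{n,m}(0,b,c,\nu,\lambda)}=\frac{R_{n,m}(a,b,c,\nu,\lambda)}{R_{n,m}(0,b,c,\nu,\lambda)}
\]
for all $a,b,c$. At $b=0$ only nonnegative powers of $x_0$ occur, so the left side is identically $1$. But $R_{n,m}(a,0,c,\nu,\lambda)$ is independent of $a$ only if $\nu^+=(\lambda+\delta^m,0^{n-m})$. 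This gives orthogonality of $P_{m,\lambda}$ to $\mathcal{A}_{q^c}^{(X_m)}E_\nu$ for \emph{every} $\nu^+\neq\eta^+$, whether or not $\nu^+$ is comparable with $\eta^+$.

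The paper then finishes with the orthogonal basis $\{\mathcal{A}_{q^c}^{(X_m)}E_{(\kappa+\delta^m,\rho)}\}$ of $\mathrm{Asym}_{q^c}[X_m]$ and a count of zero entries, which forces $\rho=0^{n-m}$ and $\kappa=\lambda$. This makes your leading-term characterization of $G$ unnecessary. As written it is too vague to use in any case: you never specify the order on the ``lower'' $f$, or show that the right-hand side is triangular with respect to it.
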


The proof begins with an observation. Let $\mathrm{Asym}_{q^c}[X_m]$ be the subspace of $\mathbb{F}[x_1,\ldots,x_n]$ that are $q^c$-antisymmetric ($t$-antisymmetric with $t=q^c$) in the first $m$ variables. That is
\[
\mathrm{Asym}_{q^c}[X_m]:=\{f(x)\in\mathbb{F}[x_1,\ldots,x_n]:T_if(x)=-f(x) \text{ for $i=1,\ldots,m-1$}\},
\]
where $T_i$ is the operator defined in~\eqref{definition-nonmac-T_i} with $t=q^c$. This subspace admits a natural orthogonal basis (with respect to the scalar product $\langle-,-\rangle_{n}$) given by\cite{baratta}
\[
\{\mathcal{A}_{q^c}^{(X_m)} E_{(\mu+\delta^m,\rho)}(x;q,q^c):\mu\in\mathcal{P}_{m},\rho\in\mathbb{N}^{n-m}\}.
\]
It is also clear that the polynomial
\begin{equation}\label{def-proof4-vp}
P_{m,\lambda}(x;q,q^c):=\prod_{1\leq i<j\leq m}(q^c x_i-x_j)P_\lambda\left( \left[X_m +\cfrac{q-q^{c+1}}{1-q^{c+1}}X_{n\setminus m}  \right];q,q^{c+1}\right)
\end{equation}
lies in $\mathrm{Asym}_{q^c}[X_m]$. Therefore, to prove Proposition~\ref{prop-proof4-1}, it suffices to show that the above polynomial is orthogonal to every basis element $\mathcal{A}_{q^c}^{(X_m)} E_{(\mu+\delta^m,\rho)}(x;q,q^c):\mu\in\mathcal{P}_{m},\rho\in\mathbb{N}^{n-m}$ except when $\mu=\lambda,\rho=0^{n-m}$.

\begin{lem}\label{lem-proof4-1}
    Let $\nu\in\mathbb{N}^n$, $\lambda$ be a partition such that $\ell(\lambda)<m$ and $P_{m,\lambda}(x;q,q^c)$ be the polynomial defined in~\eqref{def-proof4-vp}. Then
    \[
    \langle P_{m,\lambda}(x;q,q^c),\mathcal{A}_{t}^{(X_m)} E_\nu(x;q,q^c)\rangle_{q,q^c,n}\neq 0
    \]
    only when $\nu^+=\lambda+\delta^m$.
\end{lem}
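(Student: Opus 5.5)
The plan is to reduce the statement to a vanishing of a constant term that is a polynomial in $q^a$, and then to kill that polynomial by counting roots, using Theorem~\ref{proposition-main-2-2} and Theorem~\ref{Proposition-main}. Some details, especially the root count and the final specialisation, are not yet checked.

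\textbf{Step 1: remove $\mathcal{A}_t^{(X_m)}$.} With respect to $\langle-,-\rangle_{q,q^c,n}$, the operator $T_i$ is adjoint to a scalar multiple of $T_i^{-1}$ (the standard adjointness of the Demazure--Lusztig operators for this weight). The polynomial $P_{m,\lambda}$ is $q^c$-antisymmetric in $X_m$, so $T_i^{\pm1}P_{m,\lambda}=-P_{m,\lambda}$ for $i<m$. Hence
\[
\langle P_{m,\lambda},\mathcal{A}_t^{(X_m)}E_\nu\rangle_{n}=\kappa\,\langle P_{m,\lambda},E_\nu\rangle_n
\]
for a constant $\kappa\in\mathbb{F}$. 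It therefore suffices to show that $\langle P_{m,\lambda},E_\nu\rangle_n=0$ unless $\nu^+=\lambda+\delta^m$.

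\textbf{Step 2: the easy direction.} By homogeneity we may assume $|\nu|=|\lambda|+\binom m2$. Expand $P_{m,\lambda}$ into monomials $x^\beta$. The factor $\prod_{i<j\le m}(q^cx_i-x_j)$ and the dominance triangularity \eqref{eq-Mac-1} show that every $\beta$ satisfies $\beta^+\le\lambda+\delta^m$. Lemma~\ref{lem-nonmac-vanish-property} then gives $\langle x^\beta,E_\nu\rangle_n=0$ whenever $\nu\not\preceq\beta$. In particular the inner product vanishes unless $\nu^+\le\lambda+\delta^m$ in dominance order.

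\textbf{Step 3: the hard direction, $\nu^+<\lambda+\delta^m$.} This is where the machinery of Sections~\ref{section-two families}--\ref{sec-proof2} enters. Consider the deformation
\[
G(a):=\CT_x x_0^{|\nu|-|\lambda|-\binom m2}\,E_\nu(x^{-1};q^{-1},q^{-c})\,P_{m,\lambda}^{(a,b)}(x)\prod_{i=1}^n(x_0/x_i)_a(qx_i/x_0)_b\,W(x;q,q^c),
\]
where $P_{m,\lambda}^{(a,b)}$ is $P_{m,\lambda}$ with its alphabet replaced by $\frac{q^{c-b}-q^a}{1-q^{c+1}}x_0+X_m+\frac{q-q^{c+1}}{1-q^{c+1}}X_{n\setminus m}$. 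Two features make $G$ useful:
\begin{itemize}
\item The factor $\prod_{i<j\le m}(q^cx_i-x_j)$ combines with $W$ exactly into $F_{n,m}(x;a,b,c)$, up to the conjugate Vandermonde piece that is already used in \eqref{eq-F-to-M_n}.
\item $G$ is simultaneously a member of the first family, with $H_\tau=E_\nu(x^{-1};q^{-1},q^{-c})$ times a monomial-type correction and $\mu=\lambda$, and a finite combination of the $M_n(a,b,c,\nu,\alpha)$.
\end{itemize}
From these two descriptions:
\begin{itemize}
\item Theorem~\ref{proposition-main-2-1} gives that $G$ is a polynomial in $q^a$ of degree at most $nb+|\lambda|-\tau$.
\item Theorem~\ref{Proposition-main} gives roots at $-a\in B_1(\nu^+)$.
\item Theorem~\ref{proposition-main-2-2} gives roots at $-a\in B_2(\delta^m)\cup B_3(\lambda)$.
\end{itemize}
All of this holds once $b,c$ are large, say $c\ge b+\lambda_1$ and $b\ge\tau+m$.

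\textbf{Step 4: root count and conclusion.} When $\nu^+=\lambda+\delta^m$, these root sets are disjoint and together have exactly the degree bound, which matches the main theorem. When $\nu^+<\lambda+\delta^m$ strictly, some part $\nu^+_{n-i}$ is strictly smaller than the corresponding part of $\lambda+\delta^m$. This enlarges $B_1(\nu^+)$ by points that are not cancelled by $B_2\cup B_3$. The resulting number of distinct roots then exceeds the degree, so $G\equiv0$ as a polynomial in $q^a$.

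Finally, use the rationality statements in Theorem~\ref{proposition-main-2-1} to extend $G\equiv0$ to all $b$, exactly as in Lemma~\ref{lem-preliminary-a,b,c}. Then specialise to $a=b=0$, where the $x_0$-dependence of the alphabet drops out after taking the $x_0$-constant term. This should recover $\kappa^{-1}$ times the inner product of Step~1, which therefore vanishes.

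\textbf{Main obstacle.} I expect the hardest parts to be two. The first is the precise bookkeeping in Step~4: showing that a strictly smaller $\nu^+$ really produces an extra root that is disjoint from $B_2\cup B_3$, rather than merely shifting $B_1$. The second is checking that the $a=b=0$ specialisation returns exactly $\langle P_{m,\lambda},E_\nu\rangle_n$ and not a different constant term.
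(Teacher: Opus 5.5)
\textbf{Step 4 cannot work, and the statement it aims at is false.}

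The size of $B_1(\nu^+)$ is $nb-|\nu|$. This depends only on $|\nu|$, not on the shape of $\nu^+$: lowering one part of $\nu^+$ forces another part up and shrinks a different block of $B_1$. So for every $\nu$ with $|\nu|=|\lambda|+\binom m2$,
\[
|B_1(\nu^+)|+|B_2(\delta^m)|+|B_3(\lambda)| = nb-|\nu|+\binom m2+|\lambda| = nb .
\]
This is exactly the degree bound of Theorem~\ref{proposition-main-2-1}(a) with $\tau=|\nu|-\binom m2$. There is never an excess root.

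Worse, even with $\mathcal{A}_t^{(X_m)}$ kept (you need it anyway, see below), the deformed constant term is not identically zero for $b\ge1$ in general. Take $n=m=2$, $\lambda=(2)$, $\nu=(2,1)$. Then $\mathcal{A}_t^{(X_2)}E_\nu\propto x_1x_2(tx_1-x_2)$, and the deformed term is a nonzero multiple of $A_2(a,b,c+1,(1,1),(2))$. By \eqref{eq-intro-AFLT} this carries the factor $(q^b)_2$ and is a nonzero polynomial in $q^a$ for $b\ge1$, $c\ge b+1$; it vanishes only at $b=0$. So ``$G\equiv0$ for large $b,c$'' cannot be your intermediate statement. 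The information lives only in the specialisation $b=0$.

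\textbf{The missing idea} is to use the $nb$ roots and the degree bound to pin the constant term down up to a scalar, not to kill it. Let $T(a,b,c)$ be the deformed term and
\[
R(a,b,c)=\prod_{i=0}^{n-1}\frac{(q^{ic+1+a})_{b+\chi(i\ge n-m)(i-n+m+\lambda_{n-i})}}{(q^{ic+1+a})_{\nu^+_{n-i}}},
\]
which has the same zeros. The identity $T(a,b,c)R(0,b,c)=T(0,b,c)R(a,b,c)$ holds on those $nb$ points and trivially at $a=0$. Hence it holds identically in $q^a$ for $c\ge b+\lambda_1$, $b\ge|\nu|+m$. Lemma~\ref{lem-preliminary-a,b,c} then extends it to all $a,b,c$. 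At $b=0$, $T(a,0,c)$ equals the inner product for every $a$. So a nonzero inner product forces $R(a,0,c)$ to be constant in $a$. Comparing the zeros in $q^a$ of $\prod_{j\le m}(q^{(n-j)c+1+a})_{(\lambda+\delta^m)_j}$ and $\prod_{j\le n}(q^{(n-j)c+1+a})_{\nu^+_j}$ gives $\nu^+=\lambda+\delta^m$.

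This handles all $\nu$ at once, so your Step~2 is unnecessary. That is just as well: the coefficient $\frac{q-q^{c+1}}{1-q^{c+1}}$ on $X_{n\setminus m}$ breaks the monomial dominance triangularity you invoke there.

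Also do not drop $\mathcal{A}_t^{(X_m)}$ in Step~1. Without it, $W\cdot\prod_{i<j\le m}(q^cx_i-x_j)$ lacks the factors $1-q^{c+1}x_j/x_i$ of $F_{n,m}(x;a,b,c)$. No $H_\tau$ of the form \eqref{def-intro-H_tau} can supply them, so Theorem~\ref{proposition-main-2-2} does not apply to your $G$. The paper keeps the antisymmetriser and writes
\[
\mathcal{A}_{q^{-c}}^{(X_m)}E_\nu(x^{-1};q^{-1},q^{-c})=\prod_{i<j\le m}(q^{-c}x_i^{-1}-x_j^{-1})\,H_\tau
\]
with $H_\tau$ symmetric in $X_m$. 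It then applies Proposition~\ref{prop-preliminary-antisymmetric} twice to trade $\prod(x_i-q^{-c}x_j)$ for $\prod(x_i-q^{c+1}x_j)$. This lands exactly in $F_{n,m}(a,b,c,H_\tau,\lambda)$, and in the finite sum of the $M_n$ needed for Theorem~\ref{Proposition-main}.
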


To prove the above lemma, for nonnegative integers $a,b,c$, a composition $\nu\in\mathbb{N}^n$, and a partition $\lambda$ such that $\ell(\lambda)<m$, define a generalized $q$-Morris constant term $T_{n,m}(a,b,c,\nu,\lambda)$ as
\begin{multline}
\CT_{x,x_0} \prod_{i=1}^{n}(x_0/x_i)_a(qx_i/x_0)_b \prod_{1\leq i<j\leq n}(x_i/x_j)_c(qx_j/x_i)_c \mathcal{A}_{q^{-c}}^{(X_m)} E_\nu(x^{-1};q^{-1},q^{-c})\\
\times \prod_{1\leq i<j\leq m}(q^c x_i-x_j)P_\lambda\left( \left[\cfrac{q^{c-b}-q^a}{1-q^{c+1}}x_0+X_m +\cfrac{q-q^{c+1}}{1-q^{c+1}}X_{n\setminus m}  \right];q,q^{c+1}\right).
\end{multline}
Note that, by homogeneity, $T_{n,m}(a,b,c,\nu,\lambda)\neq 0$ only when $|\lambda|+\binom{m}{2}=|\nu|$, and
\[
    \Big\langle P_{m,\lambda}(x;q,q^c), \mathcal{A}_{q^c}^{(X_m)}E_\nu(x;q,q^c) \Big\rangle_{q,q^c,n}\\=T_{n,m}(0,0,c,\nu,\lambda)=T_{n,m}(a,0,c,\nu,\lambda).
\]
The following proposition gives a necessary condition for $T_{n,m}(0,0,c,\nu,\lambda)\neq 0$.

\begin{prop}\label{prop-prooflem-T_R}
If $T_{n,m}(0,0,c,\nu,\lambda)\neq 0$, then
\begin{equation}\label{eq-prooflem-T-R}
\cfrac{T_{n,m}(a,b,c,\nu,\lambda)}{T_{n,m}(0,b,c,\nu,\lambda)}=\cfrac{R_{n,m}(a,b,c,\nu,\lambda)}{R_{n,m}(0,b,c,\nu,\lambda)}
\end{equation}
holds for all nonnegative integers $a,b,c$, where
\[
R_{n,m}(a,b,c,\nu,\lambda)=\prod_{i=0}^{n-1} \cfrac{(q^{ic+1+a})_{b+\chi(i\geq n-m)(i-n+m+\lambda_{n-i})}}{(q^{ic+1+a})_{\nu^+_{n-i}}}.
\]
\end{prop}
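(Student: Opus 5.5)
We fix nonnegative integers $b$ and $c$ and treat both sides of \eqref{eq-prooflem-T-R} as polynomials in $q^a$. The plan is to determine the roots of $T_{n,m}(a,b,c,\nu,\lambda)$ and then extend to all $b,c$ with Lemma~\ref{lem-preliminary-a,b,c}. First we establish polynomiality and a degree bound. Expanding $\mathcal{A}_{q^{-c}}^{(X_m)}E_\nu(x^{-1};q^{-1},q^{-c})$ via Remark~\ref{rem-nonmac} gives a combination of $E_\gamma(x^{-1};q^{-1},q^{-c})$ with $\gamma^+=\nu^+$. Expanding the remaining factor $\prod_{i<j\le m}(q^cx_i-x_j)P_\lambda[\cdots]$ into monomials then writes $T_{n,m}$ as a finite combination of the constant terms $M_n(a,b,c,\gamma,\alpha)$, with coefficients polynomial in $q^a$ of degree at most $\alpha_0$. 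Lemma~\ref{lem-preliminary-ct-polynomiality} then shows that $T_{n,m}$ is a polynomial in $q^a$ of degree at most $nb+|\lambda|+\binom m2-|\nu|=nb$. We use the homogeneity constraint here. The same bookkeeping as in Lemma~\ref{cor-F(H_tau,mu)-rationality} gives rationality in $q^b$, and rationality in $q^c$ after the normalisation $(q)_c^n/(q)_{nc}$. The function $R_{n,m}(a,b,c,\nu,\lambda)/R_{n,m}(0,b,c,\nu,\lambda)$ is visibly also a polynomial in $q^a$ of degree $nb+\binom m2+|\lambda|-|\nu|=nb$, provided $b\ge\nu^+_1$.

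Next come the roots, for $c\ge b+\lambda_1$ and $b$ large. By Theorem~\ref{Proposition-main} every $M_n(a,b,c,\gamma,\alpha)$ with $\gamma^+=\nu^+$ vanishes for $-a\in B_1(\nu^+)$, so $T_{n,m}$ vanishes there. This matches the numerator factors of $R$ with indices in $\{ic+\nu^+_{n-i}+1,\dots,ic+b\}$. For the other factors we rewrite $T_{n,m}$ in the shape $F_{n,m}(a,b,c,H_\tau,\lambda)$ of \eqref{def-intro-F(mu,H)}. Combining $\prod_{i<j\le m}(q^cx_i-x_j)$ with the $t$-antisymmetrised inverse factor, which is divisible by $\Delta_{q^{-c}}(X_m^{-1})$ as explained in Subsection~\ref{subsec-2-8}, reproduces the extra factors $(x_i/x_j)_{c+1}(qx_j/x_i)_{c+1}$ for $i<j\le m$. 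The leftover $H_\tau$ is a homogeneous polynomial in the $x_i^{-1}$ with $\tau=|\nu|-\binom m2\le|\lambda|$. Theorem~\ref{proposition-main-2-2} with $\mu\mapsto\lambda$ then gives vanishing for $-a\in B_2(\delta^m)\cup B_3(\lambda)$, which matches the factors $ic+b+1,\dots,ic+b+(i-n+m)+\lambda_{n-i}$ for $i\ge n-m+1$. If these root sets are disjoint and of total size $nb$, then $T_{n,m}$ equals $T_{n,m}(0,b,c,\nu,\lambda)$ times the normalised product of the $(q^{ic+1+a})$ factors.

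The final step is the extension. The hypothesis $T_{n,m}(0,0,c,\nu,\lambda)\neq0$ ensures the denominator $T_{n,m}(0,b,c,\nu,\lambda)$ is not identically zero as a rational function of $q^b$. We therefore clear denominators and apply Lemma~\ref{lem-preliminary-a,b,c} to $T_{n,m}(a,b,c)R_{n,m}(0,b,c)$ and $T_{n,m}(0,b,c)R_{n,m}(a,b,c)$, using $nb+1$ points in $\mathcal S$, namely the roots plus $a=0$.

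\textbf{Main obstacle.} I expect the main difficulty to be the root-counting in the second step. One must check that $B_1(\nu^+)$, $B_2$ and $B_3(\lambda)$ are disjoint and that their total size is exactly $nb$; this uses $|\nu|=|\lambda|+\binom m2$ and the fact that $\nu^+\ne\lambda+\delta^m$ only shifts points between the sets. One must also justify the identification of $T_{n,m}$ with an $F_{n,m}(a,b,c,H_\tau,\lambda)$ whose $H_\tau$ is independent of $a,b,x_0$. When the root sets overlap, I would first try a limiting argument, deforming $b$ and using the rationality in $q^b$.
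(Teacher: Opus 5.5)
Your overall structure matches the paper's: polynomiality and rationality, vanishing on $B_1(\nu^+)$ via Remark~\ref{rem-nonmac} and Theorem~\ref{Proposition-main}, vanishing on $B_2(\delta^m)\cup B_3(\lambda)$ via Theorem~\ref{proposition-main-2-2}, then Lemma~\ref{lem-preliminary-a,b,c} with the roots plus $a=0$. However, the step that is supposed to put $T_{n,m}$ into the shape $F_{n,m}(a,b,c,H_\tau,\lambda)$ fails as you describe it. Write $\mathcal{A}_{q^{-c}}^{(X_m)}E_\nu(x^{-1};q^{-1},q^{-c})=\prod_{i<j\le m}(q^{-c}x_i^{-1}-x_j^{-1})\,H_\tau$. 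The direct combination is $(q^{-c}x_i^{-1}-x_j^{-1})(q^cx_i-x_j)=(1-q^cx_i/x_j)(1-q^{-c}x_j/x_i)$, so for $i<j\le m$ you obtain $(x_i/x_j)_{c+1}(qx_j/x_i)_c(1-q^{-c}x_j/x_i)$, not $(x_i/x_j)_{c+1}(qx_j/x_i)_{c+1}$. This matters. The vanishing behind Theorem~\ref{proposition-main-2-2} (case (ii) of Corollary~\ref{cor-preliminary-key-version2}, used in Proposition~\ref{prop-mainthm-P(d|u;k)-properties}) relies on the zero of $(qx_{u_j}/x_{u_i})_{c+1}$ at $k_i-k_j=-c-1$. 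Your integrand does not have that zero, so the theorem cannot be quoted for it.

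The missing idea is antisymmetry in $X_m$. Write the integrand of $T_{n,m}$ as $\prod_{i<j\le m}(x_i-q^{-c}x_j)\,h(x)$, where $h$ collects $\prod_{i<j\le m}(x_i^{-1}-q^cx_j^{-1})$, $H_\tau$, $P_\lambda[\cdots]$ and the remaining factors. Then $h$ is antisymmetric in $x_1,\dots,x_m$: the weight $\prod_{1\le i<j\le n}(x_i/x_j)_c(qx_j/x_i)_c$ equals a function symmetric in $X_m$ times $\prod_{i<j\le m}(x_i-q^cx_j)/(x_i-x_j)$, and $\prod_{i<j\le m}(x_i-q^cx_j)(x_i^{-1}-q^cx_j^{-1})$ is symmetric. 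Applying Proposition~\ref{prop-preliminary-antisymmetric} twice replaces $\prod(x_i-q^{-c}x_j)$ by $\prod(x_i-q^{c+1}x_j)$ under $\CT$, at the cost of the nonzero constant $[m]_{q^{-c}}!/[m]_{q^{c+1}}!$. Now $(x_i-q^{c+1}x_j)(x_i^{-1}-q^cx_j^{-1})=(1-q^{c+1}x_j/x_i)(1-q^cx_i/x_j)$ does complete the $(c+1)$-weight, giving $T_{n,m}(a,b,c,\nu,\lambda)=\frac{[m]_{q^{-c}}!}{[m]_{q^{c+1}}!}F_{n,m}(a,b,c,H_\tau,\lambda)$.

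Two smaller repairs are also needed. First, multiplying through by $R_{n,m}(0,b,c,\nu,\lambda)$ violates hypothesis (b) of Lemma~\ref{lem-preliminary-a,b,c}. With $e_i=\chi(i\ge n-m)(i-n+m+\lambda_{n-i})$, the factor $(q^{ic+1})_{b+e_i}$ is not rational in $q^b$. Compare instead $T_{n,m}(a,b,c,\nu,\lambda)$ with $T_{n,m}(0,b,c,\nu,\lambda)$ times $R_{n,m}(a,b,c,\nu,\lambda)/R_{n,m}(0,b,c,\nu,\lambda)=\prod_{i}\frac{(q^{ic+1+b+e_i})_a\,(q^{ic+1})_{\nu^+_{n-i}}}{(q^{ic+1})_a\,(q^{ic+1+a})_{\nu^+_{n-i}}}$, which is rational in $q^b$ and in $q^c$. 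Second, no limiting argument is needed for disjointness. $T_{n,m}\neq 0$ forces $\nu_1,\dots,\nu_m$ to be distinct: if $\nu_i=\nu_{i+1}$ with $i<m$, then $T_iE_\nu=tE_\nu$ and the $t$-antisymmetrisation annihilates it. Hence $\nu^+_k\ge m-k$ for $k\le m$. With $c\ge b+\lambda_1$ and $b\ge\nu^+_1$, the root blocks for consecutive $i$ then never meet, and their total size is $nb$.
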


\begin{proof}
The proof begins by verifying that both sides of~\eqref{eq-prooflem-T-R} are
\begin{enumerate}[(i)]
    \item polynomials in $q^a$ of degree at most $nb$ for fixed $b$ and $c$ such that $b\geq \nu_1^+$; 
    \item rational functions in $q^b$ for fixed $a$ and $c$;
    \item rational functions in $q^c$ for fixed $a$ and $b$;
    \item vanishing for
    \[
      -a\in B_1(\nu^+)\cup B_2(\delta^m)\cup B_3(\lambda),
    \]
    provided that $b$ and $c$ satisfy $c\geq b+\lambda_1$, $b\geq |\nu|+m$. Here the sets $B_i$ are defined as in~\eqref{def-sets-B1-B3}.
\end{enumerate}
All the statements are straightforward for the right-hand side. To establish the results for the left-hand side, we first find that $T_{n,m}(a,b,c,\nu,\lambda)$ is a constant multiple of certain $F_{n,m}(a,b,c,H_\tau,\lambda)$ (defined in~\eqref{def-intro-F(mu,H)}). Let \( \tau = |\nu| - \binom{m}{2} \), and 
\[
H_\tau=\cfrac{\mathcal{A}_{q^{-c}}^{(X_m)}E_\nu(x^{-1};q^{-1},q^{-c})}{\prod_{1\leq i<j\leq m}(q^{-c}x_i^{-1}-x_j^{-1})}.
\]
It is clear that $H_\tau$ is of the form~\eqref{def-intro-H_tau} and symmetric in $x_1,\ldots,x_m$. Define
\begin{multline*}
    h(x):=\prod_{i=1}^{n}(x_0/x_i)_a(qx_i/x_0)_b \prod_{1\leq i<j\leq n}(x_i/x_j)_c(qx_j/x_i)_c\prod_{1\leq i<j\leq m}(x_i^{-1}-q^c x_j^{-1})\\
    \times H_\tau P_\lambda\left( \left[\cfrac{q^{c-b}-q^a}{1-q^{c+1}}x_0+X_m +\cfrac{q-q^{c+1}}{1-q^{c+1}}X_{n\setminus m}  \right];q,q^{c+1}\right).
\end{multline*}
Then $h(x)$ is antisymmetric in $x_1,\ldots,x_m$ and
\[
    T_{n,m}(a,b,c,\nu,\lambda)=\CT_x \left(\prod_{1\leq i<j\leq m}(x_i-q^{-c}x_j) h(x)\right).
\]
Using Proposition~\ref{prop-preliminary-antisymmetric} twice, we obtain
\begin{multline}
    T_{n,m}(a,b,c,\nu,\lambda)=\cfrac{[m]_{q^{-c}}!}{[m]_{q^{c+1}}!}\cdot \CT_x \left(\prod_{1\leq i<j\leq m}(x_i-q^{c+1}x_j)h(x)\right)\\
    = \cfrac{[m]_{q^{-c}}!}{[m]_{q^{c+1}}!}\cdot F_{n,m}(a,b,c,H_\tau,\lambda).
\end{multline}
By Theorem~\ref{proposition-main-2-1}, we know that the statements (i)-(iii) hold for the left-hand side of ~\eqref{eq-prooflem-T-R}. By Theorem~\ref{proposition-main-2-2} we also know that when $c\geq b+\lambda_1$, $b\geq |\nu|+m$, the constant term $T_{n,m}(a,b,c,\nu,\lambda)$ vanishes for $-a\in B_2(\delta^m)\cup B_3(\lambda)$. On the other hand, by Remark~\ref{rem-nonmac}, it is not hard to see that $T_{n,m}(a,b,c,\nu,\lambda)$ is a linear combination of those $M_n(a,b,c,\eta,\alpha)$ with $\eta^+=\nu^+,\alpha\in\mathbb{N}^n,|\alpha|\leq |\lambda|+\binom{m}{2}$. It follows from Theorem~\ref{Proposition-main} that $T_{n,m}(a,b,c,\nu,\lambda)$ vanishes for $-a\in B_1(\nu^+)$. This completes all the verification.

Let $\mathcal{S}=B_1(\nu^+)\cup B_2(\delta^m)\cup B_3(\lambda)\cup\{0\}$. Then equality~\eqref{eq-prooflem-T-R} holds for all triples $(a,b,c)$ such that $-a\in\mathcal{S}, c\geq b+\lambda_1, b\geq |\nu|+m$. By Lemma~\ref{lem-preliminary-a,b,c}, the equality~\eqref{eq-prooflem-T-R} holds for all nonnegative integers $a,b,c$.
\end{proof}

\begin{proof}[Proof of Lemma~\ref{lem-proof4-1}]
By Proposition~\ref{prop-prooflem-T_R}, 
\[
  \langle P_{m,\lambda}(x;q,q^c),\mathcal{A}_{t}^{(X_m)} E_\nu(x;q,q^c)\rangle_{q,q^c,n}\neq 0
\]
implies that
\[
\cfrac{T_{n,m}(a,b,c,\nu,\lambda)}{T_{n,m}(0,b,c,\nu,\lambda)}=\cfrac{R_{n,m}(a,b,c,\nu,\lambda)}{R_{n,m}(0,b,c,\nu,\lambda)}
\]
holds for all nonnegative integers $a,b,c$. By taking $b=0$ in the above identity, we obtain
\[
    \cfrac{R_{n,m}(a,0,c,\nu,\lambda)}{R_{n,m}(0,0,c,\nu,\lambda)}=\cfrac{T_{n,m}(a,0,c,\nu,\lambda)}{T_{n,m}(0,0,c,\nu,\lambda)}=1.
\]
It is now not hard to check that the equality
\[
    R_{n,m}(a,0,c,\nu,\lambda)=R_{n,m}(0,0,c,\nu,\lambda)
\]
holds for all nonnegative integers $a$ if and only if $\nu^+=(\lambda+\delta^m,0^{n-m})$.

\end{proof}

We now prove Proposition~\ref{prop-proof4-1}.

\begin{proof}[Proof of Proposition~\ref{prop-proof4-1}]
  For $\mu\in\mathcal{P}_{m}$ and $\nu\in\mathbb{N}^{n-m}$, by Lemma~\ref{lem-proof4-1}
  \[
    \langle P_{m,\lambda}(x;q,q^c),\mathcal{A}_{t}^{(X_m)} E_{(\mu+\delta^m,\nu)}(x;q,q^c)\rangle_{q,q^c,n}\neq 0
   \]
  only when $(\mu+\delta^m,\nu)^+=(\lambda+\delta^m,0^{n-m})$. This forces that exactly $n-m+1$ zeros appear in $(\mu+\delta^m,\nu)$. Therefore, $\nu=0^{n-m}$ and $\mu=\lambda$. 
\end{proof}

%% file: 7.proof3.tex
\section{Proof of Theorem~\ref{main-thm}}\label{sec-proof3}
In this section we complete the proof of Theorem~\ref{main-thm}. As discussed in Section~\ref{section-two families}, we have completed Steps (1) and (2) outlined in the introduction by Theorem~\ref{thm-nonmac-factorization}, Theorem~\ref{proposition-main-2-1}, Theorem~\ref{proposition-main-2-2} and Theorem~\ref{Proposition-main}. To complete the Step (3), in Subsection~\ref{subsec-proof3-1} we give an explicit closed-form expression for the constant term $F_{n,m}(a,b,c,\lambda,\xi,\mu)$ at $-a=(n-m)c+b+1$, under the assumption that $c\geq b+\mu_1, b\geq |\lambda|+|\xi|+m$. Finally, in Subsection~\ref{subsec-proof3-2} we prove Theorem~\ref{main-thm}.

\subsection{The additional point}\label{subsec-proof3-1}
Throughout this subsection, let $b$ and $c$ be nonnegative integers such that $c\geq b+\mu_1, b\geq |\lambda|+|\xi|+m$. Fix $\tau=|\lambda|+|\xi|$ and
\[
H_\tau=P_\lambda\left(\left[X_m^{-1}+\cfrac{1-q^c}{1-q^{c+1}}X_{n\setminus m}^{-1}\right];q,q^{c+1} \right)P_\xi\left( X_{n\setminus m}^{-1};q^{c+1},q^c\right).
\]
Recall the definition of $P(d)$ and $P(d\Mid u^{(s)};k^{(s)})$ in~\eqref{def-P(d)} and~\eqref{definition-mainthm-P(d|u;k)}, respectively. Then
\[
\CT\limits_x P(d)=F_{n,m}(-d,b,c,H_\tau,\mu)=F_{n,m}(-d,b,c,\lambda,\xi,\mu).
\]
Hence, to give an explicit closed-form expression for $F_{n,m}(-(n-m)c-b-1,b,c,\lambda,\xi,\mu)$, it suffices to give an expression for $\CT\limits_x P((n-m)c+b+1)$.

The essential steps are displayed in Lemmas~\ref{lem-additional-lem1}-\ref{lem-additional-lem3}.
\begin{lem}\label{lem-additional-lem1}
Let $d_0=(n-m)c+b+1$. Then
\begin{equation}
    \underset{x}{\CT} P(d_0)=\sum_{u_1=1}^m
    \underset{x}{\CT} P\big(d_0\Mid \widetilde{u};\widetilde{k}\big),
\end{equation}
where $\widetilde{u}=\big(u_1,m+1,\ldots,n)$ and $\widetilde{k}=\big((n-m)c+b+1,\cdots,c+b+1,b+1\big)$.
\end{lem}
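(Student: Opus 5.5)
We run the Gessel--Xin operation on $P(d_0)$ and use the vanishing and recursion properties of Proposition~\ref{prop-mainthm-P(d|u;k)-properties} to prune the expansion~\eqref{eq-mainthm-P(d)-sum}. The pruning will show that only terms with $s=n-m+1$ survive, and that among these only the stated ones are nonzero.

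Since $nd_0>nb\geq\tau$, the expansion~\eqref{eq-mainthm-P(d)-sum} is valid, with each index $s$ maximal.

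\textbf{Step 1: only $s=n-m+1$ survives.} For $s\geq n-m+2$ we have $T_s=(s-1)c+b+(s-n+m-1)\geq (n-m+1)c+b+1>d_0$ (using $c\geq 1$). Hence case (1) of Proposition~\ref{prop-mainthm-P(d|u;k)-properties} gives $P(d_0\Mid u^{(s)};k^{(s)})=0$. For $s\leq n-m$ we have
\[
d_0=(n-m)c+b+1>sc+\frac{(m-r)r+\tau}{n-s},
\]
because $c\geq b\geq \tau+m$ and $(m-r)r\leq m^2/4$. This bound is crude but looks sufficient. Case (2) then shows that such a summand is either zero or further expandable, which contradicts maximality. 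So $s=n-m+1$, and $d_0=T_s+1$, i.e.\ $\mathfrak{e}=1$ in the notation of Lemma~\ref{lem-preliminary-cardi-P_mu}.

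\textbf{Step 2: pin down $u$ and $k$.} Apply Lemma~\ref{lem-preliminary-key} with $s=n-m+1$, $e=1$ and $r=r(u)$. In cases (i)--(iii) the summand vanishes by the argument of part (1) of Proposition~\ref{prop-mainthm-P(d|u;k)-properties}. In case (iv), inequality~\eqref{eq-preliminary-keylemma-1-t} gives $r\leq 1$. Since $r\geq s-n+m=1$, we get $r=1$, so exactly one $u_i$ lies in $\{1,\ldots,m\}$. Because $s=n-m+1$ and there are only $n-m$ indices larger than $m$, the remaining $u_i$ must be exactly $m+1,\ldots,n$. This forces $u=\widetilde u$.

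With $r=1$, all the products $\chi(\omega(j)\leq 1)\chi(\omega(j-1)\leq 1)$ vanish. The total $\sum d_j$ is then exactly $1$, and $d_1>0$ because $\omega(0)<\omega(1)$. Hence $d_1=1$ and $d_j=0$ for $j\geq 2$. The condition that $d_j>0$ whenever $\omega(j-1)<\omega(j)$ then forces $\omega$ to be decreasing, i.e.\ $\omega(j)=s+1-j$. This is exactly the $e=1$ specialization~\eqref{eq-preliminary-keycor-t-specialcase}, giving $k_i=(s-i)c+b+1$, that is, $k=\widetilde k$.

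\textbf{Step 3: check maximality of the surviving terms.} The terms $P(d_0\Mid\widetilde u;\widetilde k)$ are genuinely terminal, since $u_s=n$ and no variable index exceeds $n$. One should still confirm that these terms are reached legitimately by the recursion. That is, at each intermediate stage $s'<n-m+1$, the degree condition of case (2) must hold so that the expansion can proceed; this is the inequality already verified in Step~1.

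\textbf{Expected main obstacle.} The delicate point is Step~1's inequality for all $s\leq n-m$ and all $r$ in the range~\eqref{eq-mainthm-range-r}, including small $n-s$. The worst case is $s=n-m$ with $n-s=m$, where we need
\[
c+b+1>\frac{(m-r)r+\tau}{m}.
\]
This follows from $b\geq \tau+m$. I would verify the general case by bounding $(m-r)r+\tau\leq m(m+\tau)$ and using $(n-m-s)c\geq 0$.
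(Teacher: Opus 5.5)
Your proposal is correct and follows the paper's proof essentially step for step. You restrict to $s=n-m+1$ via Proposition~\ref{prop-mainthm-P(d|u;k)-properties} exactly as in Corollary~\ref{cor-mainthm-P-i}, then use Lemma~\ref{lem-preliminary-key} with $e=1$ to force $r=1$ (hence $u=\widetilde u$) and the $e=1$ rigidity to force $k=\widetilde k$.

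There is one harmless slip in your final paragraph. At $s=n-m$ the required inequality is $b+1>\big((m-r)r+\tau\big)/m$, not $c+b+1>\cdots$, because $d_0-(n-m)c=b+1$; this still follows from $b\geq\tau+m$.
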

\begin{proof}
It is clear that $nd_0>nb>|\lambda|+|\xi|=\tau$, and so we have the expansion
\begin{equation}\label{eq-mainthm-extra-P-sum-1}
\CT_{x}P(d_0)=\sum_{s\in T\subseteq \{1,\dots,n\}}\sum_{\substack{1\leq u_1<\cdots<u_s\leq n\\1\leq k_1,\dots,k_s\leq d_0}}
\underset{x}{\CT} P(d_0\Mid u^{(s)};k^{(s)}),
\end{equation}
where the $s$ in each $\CT\limits_x P(d_0\Mid u^{(s)};k^{(s)})$ is maximal. By the same argument as that in the proof of Corollary~\ref{cor-mainthm-P-i}, the $s$ in \eqref{eq-mainthm-extra-P-sum-1} can only be $n-m+1$. Hence, the equation reduces to
\begin{equation}\label{eq-mainthm-extra-P-sum-2}
\CT_{x} P(d_0)=\sum_{\substack{1\leq u_1<\cdots<u_{n-m+1}\leq n\\1\leq k_1,\dots,k_{n-m+1}\leq d_0}}
\underset{x}{\CT} P(d_0\Mid u^{(n-m+1)};k^{(n-m+1)}).
\end{equation}
Recall that for a given $P(d_0\Mid u^{(n-m+1)};k^{(n-m+1)})$, the corresponding $r=r(u^{(n-m+1)})$ is defined as in~\eqref{definition-mainthm-r}. When $s=n-m+1$, we have $r\geq 1$. 

If $r>1$, by Lemma~\ref{lem-preliminary-key} with $s=n-m+1$ and $e=1<r$, only three cases can hold for the $k_i$:
\begin{enumerate}
\item[(i)] $1\leq k_i\leq b$ for some $i$ with $1\leq i\leq s$;
\item[(ii)] $-c-1\leq k_i-k_j\leq c$ for some $(i,j)$ such that $1\leq i<j\leq r$;
\item[(iii)] $-c\leq k_{i}-k_{j}\leq c-1$ for some $(i,j)$ such that $1\leq i<j\leq s$ and $j>r$.
\end{enumerate}
By the same argument as that in the first part of the proof of Proposition~\ref{prop-mainthm-P(d|u;k)-properties}, each of these cases will make $P\big(d_0\Mid u;k\big)=0$. Hence, the only non-vanishing case for $P(d_0\Mid u^{(n-m+1)};k^{(n-m+1)})$ is the $r=1$ case. This forces $u_1\leq m$ and $m<u_2<\ldots<u_{n-m+1}\leq n$. Therefore, 
\[
u_i=i+m-1\quad \text{ for $i=2,\dots,n-m+1$}.
\]
On the other hand, by Corollary~\ref{cor-preliminary-key}, with $s=n-m+1$ and $e=1$, the rational function $P\big(d_0\Mid u^{(n-m+1)};k^{(n-m+1)}\big)$ will vanish unless 
\[
k_i=(n-m+1-i)c+b+1\quad \text{ for $i=1,\ldots,n-m+1$}.
\]
This completes the proof.
\end{proof}

For $1\leq u_1\leq m$, let $\widetilde{u}$ and $\widetilde{k}$ be defined as in Lemma~\ref{lem-additional-lem1}, and let $\tau_{u_1}$ be the substitution $x_n\mapsto q^{(n-m)c}x_{u_1}$.  It is clear that
\begin{equation}\label{eq-additional-P-tauP}
\underset{x}{\CT} P\big(d_0\Mid \widetilde{u};\widetilde{k}\big) = \underset{x}{\CT} \tau_{u_1}\Big( P(d_0\Mid\widetilde{u};\widetilde{k})\Big).
\end{equation}
By a direct calculation, we obtain an explicit expression for $\underset{x}{\CT} \tau_{u_1}\big(P(d_0\Mid\widetilde{u};\widetilde{k})\big)$.
\begin{lem}\label{lem-additional-lem2}
    Let $1\leq u_1\leq m$, $d_0=(n-m)c+b+1$, and $\widetilde{u}$, $\widetilde{k}$ be defined as in Lemma~\ref{lem-additional-lem1}. Then $\underset{x}{\CT} \tau_{u_1}\big(P(d_0\Mid\widetilde{u};\widetilde{k})\big)$ can be expressed as 
\begin{multline}
C_1\times \underset{x}{\CT} \  (q^{(n-m)c+b}x_{u_1})^{|\lambda|-|\mu|} \prod_{\substack{ i=1\\ i\neq u_1}}^m \cfrac{(q^{-b-(n-m)c}x_i/x_{u_1})_{(n-m)c+b} (q^{1+c}x_{u_1}/x_i)_{(n-m)c}}{(x_{u_1}/x_i)_{(n-m)c+b+1}} 
\\
\times P_\mu\Big( \Big[\sum_{\substack{i=1\\ i\neq u_1}}^m x_i \Big];q,q^{c+1}\Big)P_\lambda\Big(\Big[ \sum_{\substack{i=1\\ i\neq u_1}}^m x_i^{-1} +\cfrac{q^b-q^{(n-m+1)c+b+1}}{1-q^{c+1}}(q^{(n-m)c+b}x_{u_1})^{-1} \Big];q,q^{c+1} \Big) 
\\
\times \prod_{1\leq i < j\leq m} (x_i/x_j)_{c+1}(qx_j/x_i)_{c+1},
\end{multline}
where
\begin{equation}\label{eq-additional-C_1-expression}
    C_1=q^{(b+1)|\xi|+|\lambda|-|\mu|}\cdot \prod_{i=0}^{n-m} \cfrac{(q^{1+c})_{ic} (q^{-ic-b})_b }{(q)_{ic+b}} P_{\xi}\Big(\Big[\cfrac{1-q^{(n-m)c}}{1-q^c}\Big];q^{c+1},q^c\Big).
\end{equation}
\end{lem}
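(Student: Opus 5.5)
The plan is a direct computation of $\tau_{u_1}\big(P(d_0\Mid\widetilde u;\widetilde k)\big)$ from the definitions of $P(d)$ and $S_{u,k}$, sorting the factors into four groups: constants, factors involving $x_0$, the variables $x_{m+1},\dots,x_n$, and the remaining $x_i$ with $i\le m$. Under $S_{\widetilde u,\widetilde k}$ followed by $\tau_{u_1}$, each eliminated variable becomes a monomial multiple of $x_{u_1}$:
\[
x_{m+j}\mapsto q^{(j-1)c}x_{u_1}\quad (j=1,\dots,n-m),
\qquad
x_0\mapsto q^{(n-m)c+b+1}x_{u_1}.
\]
Indeed $x_0=x_{u_{s}}q^{k_s}$ with $u_s=n$ and $k_s=b+1$, and then $\tau_{u_1}$ sets $x_n=q^{(n-m)c}x_{u_1}$. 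So all of $x_0,x_{m+1},\dots,x_n$ collapse onto the $q$-string $x_{u_1},q^cx_{u_1},\dots,q^{(n-m)c}x_{u_1}$. The only surviving variables are $x_i$ with $i\le m$, and $x_{u_1}$ plays the role of the new ``$x_0$''.

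The factors are then handled in the following order.

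\emph{Step 1 (constants).} Collect the factors in which both arguments are among $x_0,x_{u_1},x_{m+1},\dots,x_n$. These are
\begin{itemize}
\item the $(qx_{u_j}/x_0)_b$ factors and the cancelled denominators $(q^{-d_0}x_0/x_{u_j})_{d_0}$;
\item the products $(x_{u_i}/x_{u_j})_{c+\chi}(qx_{u_j}/x_{u_i})_{c+\chi}$ among the substituted variables.
\end{itemize}
These are exactly the quantities $V$ from the analogue of \eqref{eq-mainprop-Q(d|r;k)-HVLform}. I expect them to telescope into $\prod_{i=0}^{n-m}(q^{1+c})_{ic}(q^{-ic-b})_b/(q)_{ic+b}$. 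Here $u_1\le m$ interacts with the $x_{m+j}$ only through exponent $c$, since $j>m$ makes $\chi(j\le m)=0$.

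\emph{Step 2 (factors between the $x_i$, $i\le m$, $i\ne u_1$, and the collapsed string).} Combine the following, for each such $i$:
\begin{itemize}
\item $(x_0/x_i)$-denominators $(q^{-d_0}x_0/x_i)_{d_0}$ and numerators $(qx_i/x_0)_b$;
\item the $2(n-m)$ factors $(x_i/x_{m+j})_c(qx_{m+j}/x_i)_c$;
\item the factor with $x_{u_1}$, of exponent $c+1$.
\end{itemize}
After substitution these are $q$-shifted factorials in $x_i/x_{u_1}$ whose arguments form consecutive blocks. Using $(z)_a(zq^a)_b=(z)_{a+b}$, they should merge into the displayed ratio
\[
\frac{(q^{-b-(n-m)c}x_i/x_{u_1})_{(n-m)c+b}\,(q^{1+c}x_{u_1}/x_i)_{(n-m)c}}{(x_{u_1}/x_i)_{(n-m)c+b+1}}.
\]
The $i$–$u_1$ factor of exponent $c+1$ is absorbed into the product $\prod_{1\le i<j\le m}(x_i/x_j)_{c+1}(qx_j/x_i)_{c+1}$, which survives intact.

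\emph{Step 3 (symmetric functions).} The monomial $x_0^{\tau-|\mu|}$ gives $(q^{(n-m)c+b+1}x_{u_1})^{\tau-|\mu|}$. The plethystic arguments are computed by summing geometric series.
\begin{itemize}
\item For $P_\mu$, the alphabet
\[
\frac{q^{c-b}-q^{-d_0}}{1-q^{c+1}}x_0+x_{u_1}+\frac{q-q^{c+1}}{1-q^{c+1}}\sum_{j} q^{(j-1)c}x_{u_1}
\]
should collapse to $0$ (this is the $\mathfrak e=0$ analogue of Lemma~\ref{lem-preliminary-cardi-P_mu}), leaving $\sum_{i\ne u_1,\,i\le m}x_i$.
\item For $P_\xi\big[\sum_j q^{-(j-1)c}x_{u_1}^{-1}\big]$, homogeneity \eqref{e-homo-sym} gives $x_{u_1}^{-|\xi|}P_\xi\big[\frac{1-q^{-(n-m)c}}{1-q^{-c}}\big]$. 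Then \eqref{eq-mac-qt-inverse} and degree bookkeeping convert this to the stated evaluation $q^{(b+1)|\xi|}\dots P_\xi\big[\frac{1-q^{(n-m)c}}{1-q^c}\big]$, with the $x_{u_1}^{-|\xi|}$ cancelling against part of the $x_0$-power.
\item For $P_\lambda$, the alphabet
\[
x_{u_1}^{-1}+\frac{1-q^c}{1-q^{c+1}}\sum_j q^{-(j-1)c}x_{u_1}^{-1}
\]
sums to $\frac{q^b-q^{(n-m+1)c+b+1}}{1-q^{c+1}}(q^{(n-m)c+b}x_{u_1})^{-1}$.
\end{itemize}

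The main obstacle is the bookkeeping in Steps 1 and 2: tracking the $\chi(i>u_j)$ shifts, since $u_1$ can sit anywhere among $1,\dots,m$ while $m+1,\dots,n$ follow it, and confirming that the powers of $q$ match exactly the prefactor $C_1$. I would first check the case $n-m=1$ by hand, then induct on $n-m$ by peeling off the variable $x_n$.
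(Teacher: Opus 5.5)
Your approach is the paper's, which only says ``by a direct calculation''. Your Steps 1 and 2 are correct, and no induction on $n-m$ is needed.
\begin{itemize}
\item In Step 1, take $k=ic+b+1$ with $0\le i\le n-m$. The cancelled $x_0$-factors give $\prod_{i}(q^{-ic-b})_b/\big((q)_{ic+b}(q^{-(n-m-i)c})_{(n-m-i)c}\big)$.
\item The pairs in $\widetilde u$ at distance $g$ contribute $\big[(q^{-gc})_c(q^{gc+1})_c\big]^{n-m+1-g}$. Their total product is $\prod_{h=1}^{n-m}(q^{-hc})_{hc}(q^{c+1})_{hc}$.
\item The factors $(q^{-hc})_{hc}$ cancel, leaving exactly the product in $C_1$.
\item The $\chi(i>u_j)$ bookkeeping you flag as the main obstacle is vacuous. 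The composite substitution fixes $x_{u_1}$, and every other $u_j$ exceeds every $i\le m$.
\end{itemize}

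There is, however, an off-by-one error in your substitution, and it breaks Step 3 as written. Since $x_{m+j}=x_{u_{j+1}}$ with $k_{j+1}=(n-m-j)c+b+1$, the composite $\tau_{u_1}\circ S_{\widetilde u,\widetilde k}$ sends $x_{m+j}\mapsto q^{jc}x_{u_1}$, not $q^{(j-1)c}x_{u_1}$. Your rule also contradicts your own $x_n\mapsto q^{(n-m)c}x_{u_1}$. With your indexing taken literally:
\begin{itemize}
\item the $x_{u_1}$-coefficient of the $P_\mu$ alphabet is $q(1-q^c)(1-q^{(n-m)c})/(1-q^{c+1})\neq 0$;
\item the $P_\lambda$ coefficient comes out wrong;
\item the $P_\xi$ bookkeeping yields $q^{(c+b+1)|\xi|}$ instead of $q^{(b+1)|\xi|}$.
\end{itemize}

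With the correct rule everything closes:
\[
\frac{q^{(n-m+1)c+1}-1}{1-q^{c+1}}+1+\frac{q-q^{c+1}}{1-q^{c+1}}\sum_{j=1}^{n-m}q^{jc}=0,\qquad
1+\frac{1-q^{c}}{1-q^{c+1}}\sum_{j=1}^{n-m}q^{-jc}=\frac{q^{-(n-m)c}-q^{c+1}}{1-q^{c+1}}.
\]
The right-hand side of the second identity is the stated coefficient of $x_{u_1}^{-1}$ in the $P_\lambda$ alphabet.

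For $P_\xi$, write $\sum_{j=1}^{n-m}q^{-jc}x_{u_1}^{-1}=q^{-(n-m)c}x_{u_1}^{-1}\sum_{l=0}^{n-m-1}q^{lc}$. Homogeneity \eqref{e-homo-sym} then gives $(q^{-(n-m)c}x_{u_1}^{-1})^{|\xi|}P_\xi\big[\frac{1-q^{(n-m)c}}{1-q^c}\big]$. Combined with the factor $(q^{(n-m)c+b+1}x_{u_1})^{|\lambda|+|\xi|-|\mu|}$ coming from $x_0^{\tau-|\mu|}$, this produces exactly $q^{(b+1)|\xi|+|\lambda|-|\mu|}(q^{(n-m)c+b}x_{u_1})^{|\lambda|-|\mu|}$. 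You do not need \eqref{eq-mac-qt-inverse} for this.
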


For partitions $\lambda$ and $\mu$, denote
\begin{multline}
    A_n^\prime(a,b,c,\lambda,\mu)= \underset{x}{\CT} x_0^{|\lambda|-|\mu|}\prod_{i=1}^n (x_0/x_i)_a (qx_i/x_0)_b \prod_{1\leq i\neq j \leq n}(x_i/x_j)_c \\
    \times P_\lambda( x^{-1};q, q^c) P_\mu\left( \left[ \cfrac{q^{c-b-1}-q^a}{1-q^c}x_0+\sum_{i=1}^n x_i\right];q,q^c \right).
\end{multline}
The next lemma serves as a bridge. It connects the rational function $P(d_0)$ with the AFLT-type $q$-Morris constant term.
\begin{lem}\label{lem-additional-lem3}
Let $d_0=(n-m)c+b+1$. Then
    \begin{equation}
    \begin{aligned}
        \underset{x}{\CT} P(d_0)&=\cfrac{C_1}{(m-1)!}\prod_{i=1}^{m-1}\cfrac{1-q^{(i+1)(c+1)}}{1-q^{c+1}}A_{m-1}^\prime((n-m+1)c+b+1,c-b,c+1,\mu,\lambda)\\
        &=C_1\times\cfrac{1-q^{m(c+1)}}{1-q^{c+1}} A_{m-1}((n-m+1)c+b+1,c-b,c+1,\mu,\lambda),
        \end{aligned}
    \end{equation}
    where $C_1$ is the constant defined in~\eqref{eq-additional-C_1-expression}.
\end{lem}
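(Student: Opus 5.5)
The plan is to start from Lemma~\ref{lem-additional-lem1} and \eqref{eq-additional-P-tauP}, which give
\[
\CT_x P(d_0)=\sum_{u_1=1}^m \CT_x \tau_{u_1}\big(P(d_0\Mid \widetilde u;\widetilde k)\big).
\]
Each summand is then replaced by the expression of Lemma~\ref{lem-additional-lem2}. After the substitution only the variables $x_1,\dots,x_m$ remain, and the summand for a given $u_1$ is the same function with $x_{u_1}$ singled out. The next step is to relabel $x_{u_1}$ as a new ``$x_0$''. To do this, we write
\[
(q^{(n-m)c+b}x_{u_1})=:y_0 ,
\]
and check that the $u_1$-dependent factors turn into those of an $(m-1)$-variable Morris-type constant term. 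The factor $(q^{-b-(n-m)c}x_i/x_{u_1})_{(n-m)c+b}$ becomes $(x_i/y_0)_{(n-m)c+b}$ up to a shift. The factor $(q^{1+c}x_{u_1}/x_i)_{(n-m)c}/(x_{u_1}/x_i)_{(n-m)c+b+1}$ should combine with the cross factors $(x_{u_1}/x_i)_{c+1}(qx_i/x_{u_1})_{c+1}$ from $\prod_{i<j\le m}$. The target is a product of the shape $(y_0/x_i)_{A}(qx_i/y_0)_{B}$ with $A=(n-m+1)c+b+1$ and $B=c-b$.

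A point to watch is the direction of the cross factors. These differ according to whether $i<u_1$ or $i>u_1$, so we rewrite them using $(z)_{k}(q/z)_{k}$-type identities until the product is symmetric in the direction. This should account for the powers of $q$ and signs. Meanwhile the plethystic arguments should match $A'_{m-1}$ with the roles of $\lambda,\mu$ swapped. Specifically, $P_\mu[\sum_{i\ne u_1}x_i]$ should become $P_\mu(x^{-1})$ after the inversion $x_i\mapsto x_i^{-1}$, $y_0\mapsto y_0^{-1}$, which preserves constant terms. For the $\lambda$ argument one checks
\[
\frac{q^b-q^{(n-m)c+b+c+1}}{1-q^{c+1}}=\frac{q^{(c+1)-B-1}-q^{A}}{1-q^{c+1}} \quad (\text{with }q^{c+1}\text{ as ``}t\text{''}),
\]
possibly after the inversion. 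This should reproduce the $P_\lambda$ factor of $A'$ with $c\mapsto c+1$. The remaining factor $\prod_{i\neq j,\, i,j\neq u_1}(x_i/x_j)_{c+1}$ arises from $\prod_{1\le i<j\le m}(x_i/x_j)_{c+1}(qx_j/x_i)_{c+1}$ restricted to indices $\ne u_1$, after symmetrization.

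Summing over $u_1$ is handled by symmetry. The integrand without the distinguished variable is symmetric in the other $m-1$ variables, except for the asymmetric weight $\prod_{i<j}(x_i/x_j)_{c+1}(qx_j/x_i)_{c+1}$. So we apply Proposition~\ref{prop-preliminary-laruent-equiv} (with $c\mapsto c+1$, $n\mapsto m$). The natural way is to first use it on the full $m$-variable sum to convert to $\prod_{i\ne j}$; then each $u_1$ gives the same contribution, and the sum over $u_1$ becomes $m$ times one term. Using $\prod_{i\ne j}$ on the remaining $m-1$ variables yields exactly $A'_{m-1}(A,B,c+1,\mu,\lambda)$. The prefactor is $\frac{1}{m!}\prod_{i=1}^{m-1}\frac{1-q^{(i+1)(c+1)}}{1-q^{c+1}}$ times $m$, which gives $\frac{1}{(m-1)!}\prod\cdots$, matching the first equality.

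The second equality follows by applying Proposition~\ref{prop-preliminary-laruent-equiv} again, backwards, with $n\mapsto m-1$. This converts $A'_{m-1}$ to $A_{m-1}$ at the cost of $(m-1)!/\prod_{i=1}^{m-2}\frac{1-q^{(i+1)(c+1)}}{1-q^{c+1}}$, leaving precisely $\frac{1-q^{m(c+1)}}{1-q^{c+1}}$. The main obstacle will be the bookkeeping in the relabeling step: showing that the $u_1$-dependent rational factors combine with the cross terms into exactly $(y_0/x_i)_{A}(qx_i/y_0)_{B}$, with no stray $q$-powers beyond those absorbed into $C_1$. I would first verify this for $m=2$ and then argue termwise for each $i\neq u_1$, treating $i<u_1$ and $i>u_1$ separately.
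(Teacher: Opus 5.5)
Your proposal is essentially the paper's proof: Lemma~\ref{lem-additional-lem1} and Lemma~\ref{lem-additional-lem2}, then Proposition~\ref{prop-preliminary-laruent-equiv} (with $c\mapsto c+1$) applied to the full symmetric sum $\sum_{u_1}R_{u_1}$, then identifying each $u_1$-term with $A'_{m-1}((n-m+1)c+b+1,c-b,c+1,\mu,\lambda)$ via $y_0=q^{(n-m)c+b}x_{u_1}$ and inversion, picking up the factor $m$, and finally using Proposition~\ref{prop-preliminary-laruent-equiv} backwards for the second equality.

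One correction concerns the orientation bookkeeping you flag as the main obstacle. In the route of your third paragraph it simply disappears. After symmetrization the cross factor is $(x_i/x_{u_1})_{c+1}(x_{u_1}/x_i)_{c+1}$, and this merges exactly (using $c\geq b$) with the factors of Lemma~\ref{lem-additional-lem2} into $(x_i/y_0)_{(n-m+1)c+b+1}(qy_0/x_i)_{c-b}$, with no stray powers of $q$. By contrast, your second-paragraph idea of fixing the orientation of the $\prod_{i<j}$ weight termwise by $(z)_k(q/z)_k$-type identities would fail. The quantities $(z)_k(q/z)_k$ and $(1/z)_k(qz)_k$ differ by the non-monomial factor $(q^k-z)/(1-q^kz)$, not just by powers of $q$ and signs. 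So the symmetrization must come first, as in your third paragraph, and no split into $i<u_1$ and $i>u_1$ is needed.
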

\begin{proof}
For $u_1=1,\ldots,m$, let
\begin{multline}
R_{u_1}(x):= (q^{(n-m)c+b}x_{u_1})^{|\lambda|-|\mu|} \prod_{\substack{ i=1\\ i\neq u_1}}^m \cfrac{(q^{-b-(n-m)c}x_i/x_{u_1})_{(n-m)c+b} (q^{1+c}x_{u_1}/x_i)_{(n-m)c}}{(x_{u_1}/x_i)_{(n-m)c+b+1}} 
\\
\times  P_\mu\Big( \Big[\sum_{\substack{i=1\\ i\neq u_1}}^m x_i \Big];q,q^{c+1}\Big)  P_\lambda\Big(\Big[ \sum_{\substack{i=1\\ i\neq u_1}}^m x_i^{-1} +\cfrac{q^b-q^{(n-m+1)c+b+1}}{1-q^{c+1}}(q^{(n-m)c+b}x_{u_1})^{-1} \Big];q,q^{c+1} \Big).
\end{multline}

By Lemma~\ref{lem-additional-lem2}, for $\widetilde{u}=(u_1,m+1,\ldots,n)$, $\widetilde{k}=\big((n-m)c+b+1,\cdots,c+b+1,b+1\big)$,
\[
\underset{x}{\CT} \tau_{u_1}\Big(P(d_0\Mid\widetilde{u};\widetilde{k})\Big)= C_1 \times \underset{x}{\CT} R_{u_1}(x) \prod_{1\leq i<j\leq m}(x_i/x_j)_{c+1}(qx_j/x_i)_{c+1}.
\]
By Lemma~\ref{lem-additional-lem1} and ~\eqref{eq-additional-P-tauP}, we can write
    \[
        \underset{x}{\CT} P(d_0)= C_1 \times \underset{x}{\CT} \sum_{u_1=1}^m R_{u_1}(x) \prod_{1\leq i<j\leq m}(x_i/x_j)_{c+1}(qx_j/x_i)_{c+1}.
    \]
Notice that $\sum_{u_1=1}^m R_{u_1}(x)$ is a Laurent polynomial symmetric in $x_1,\ldots,x_m$. By Proposition~\ref{prop-preliminary-laruent-equiv}, we have
\begin{align*}
\underset{x}{\CT} P(d_0) &= \cfrac{C_1}{m!}\prod_{i=1}^{m-1}\cfrac{1-q^{(i+1)(c+1)}}{1-q^{c+1}}\times \underset{x}{\CT} \sum_{u_1=1}^m R_{u_1}(x)\prod_{1\leq i\neq j\leq m}(x_i/x_j)_{c+1}\\
&=\cfrac{C_1}{m!}\prod_{i=1}^{m-1}\cfrac{1-q^{(i+1)(c+1)}}{1-q^{c+1}}\times \sum_{u_1=1}^m \underset{x}{\CT} R_{u_1}(x)\prod_{1\leq i\neq j\leq m}(x_i/x_j)_{c+1}.
\end{align*}
A straightforward calculation yields
\begin{align*}
&\underset{x}{\CT}  R_{u_1}(x)\prod_{1\leq i\neq j\leq m}(x_i/x_j)_{c+1}\\
& = \underset{x}{\CT}  (q^{(n-m)c+b}x_{u_1})^{|\lambda|-|\mu|} P_\mu\Big( \Big[\sum_{\substack{i=1\\ i\neq u_1}}^m x_i \Big];q,q^{c+1}\Big)\\
& \times P_\lambda\Big(\Big[ \sum_{\substack{i=1\\ i\neq u_1}}^m x_i^{-1} +\cfrac{q^b-q^{(n-m+1)c+b+1}}{1-q^{c+1}}(q^{(n-m)c+b}x_{u_1})^{-1} \Big];q,q^{c+1} \Big) \\
& \times\prod_{\substack{i=1\\ i\neq u_1}}^m (q^{-b-(n-m)c} x_i/x_{u_1})_{(n-m+1)c+b+1} (q^{b+(n-m)c+1}x_{u_1}/x_i)_{c-b} \prod_{\substack{1\leq i\neq j\leq m \\ i,j\neq u_1}}(x_i/x_j)_{c+1}.
\end{align*}
One can verify that
\begin{align*}
\underset{x}{\CT} R_{u_1}(x) \prod_{1\leq i\neq j\leq m}(x_i/x_j)_{c+1} 
&= \underset{x}{\CT} \Big[ R_{u_1}(x)\prod_{1\leq i\neq j\leq m}(x_i/x_j)_{c+1}\Big]_{q^{(n-m)c+b}x_{u_1}\mapsto x_0}\\
&= A_{m-1}^\prime((n-m+1)c+b+1,c-b,c+1,\mu,\lambda).
\end{align*}
Hence,
\begin{align*}
    \underset{x}{\CT} P(d_0)
    &= \cfrac{C_1}{m!}\prod_{i=1}^{m-1}\cfrac{1-q^{(i+1)(c+1)}}{1-q^{c+1}}\times \sum_{u_1=1}^m \underset{x}{\CT} R_{u_1}(x)\prod_{1\leq i\neq j\leq m}(x_i/x_j)_{c+1}\\
    &=\cfrac{C_1}{m!}\prod_{i=1}^{m-1}\cfrac{1-q^{(i+1)(c+1)}}{1-q^{c+1}}\times \sum_{u_1=1}^m A_{m-1}^\prime((n-m+1)c+b+1,c-b,c+1,\mu,\lambda)\\
    &= \cfrac{C_1}{(m-1)!}\prod_{i=1}^{m-1}\cfrac{1-q^{(i+1)(c+1)}}{1-q^{c+1}}\times A_{m-1}^\prime((n-m+1)c+b+1,c-b,c+1,\mu,\lambda).
\end{align*}
The second equality of the lemma follows from Proposition~\ref{prop-preliminary-laruent-equiv}.
\end{proof}

By Lemma~\ref{lem-additional-lem3} and~\eqref{eq-intro-AFLT}, together with the fact that
\[
\underset{x}{\CT} P(d)=F_{n,m}(-d,b,c,\lambda,\xi,\mu),
\]
we obtain an explicit expression for $F_{n,m}(a,b,c,\lambda,\xi,\mu)$ at $a=-(n-m)c-b-1$. We conclude the result in the next lemma.

\begin{lem}
Assume $c\geq b+\mu_1$, $b\geq |\lambda|+|\xi|+m$ and $a=-(n-m)c-b-1$. Then
\begin{multline}
    F_{n,m}(a,b,c,\lambda,\xi,\mu) = (-1)^{|\mu|}q^{(b+1)(|\xi|+|\lambda|)+\sum_{i=1}^{\ell(\mu)}\binom{\mu_i}{2}-(c+1)n(\mu)} \cfrac{1-q^{m(c+1)}}{1-q^{c+1}} \prod_{i=0}^{n-m} \cfrac{(q^{1+c})_{ic} (q^{-ic-b})_b }{(q)_{ic+b}}\\
    \times \prod_{i=1}^{m-1}\prod_{j=1}^{m-1}(q^{(i-j)(c+1)-b-\mu_i+\lambda_{j+1}})_{\lambda_j-\lambda_{j+1}}
    \prod_{i=1}^{m-1} \cfrac{(q^{(n-i)(c+1)+b-(n-m-1)+\mu_i})_{c-b-\mu_i} (q)_{i(c+1)}}{(q)_{i(c+1)-b-1-\mu_i+\lambda_1} (q)_{c+1}}\\
    \times P_\xi\Big(\Big[\cfrac{1-q^{(n-m)c}}{1-q^c}\Big];q^{c+1},q^c\Big)P_\lambda\Big(\Big[ \cfrac{1-q^{nc+m}}{1-q^{c+1}}\Big];q,q^{c+1} \Big) P_\mu\Big( \Big[ \cfrac{1-q^{(m-1)(c+1)}}{1-q^{c+1}}\Big];q,q^{c+1}\Big) .
\end{multline}
\end{lem}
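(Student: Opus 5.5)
The evaluation follows from Lemma~\ref{lem-additional-lem3} by substituting the AFLT-type $q$-Morris identity~\eqref{eq-intro-AFLT}. Since $\CT_x P(d_0)=F_{n,m}(-d_0,b,c,\lambda,\xi,\mu)$ with $d_0=(n-m)c+b+1$, Lemma~\ref{lem-additional-lem3} gives
\[
F_{n,m}(a,b,c,\lambda,\xi,\mu)=C_1\cdot\frac{1-q^{m(c+1)}}{1-q^{c+1}}\,A_{m-1}\big((n-m+1)c+b+1,\,c-b,\,c+1,\,\mu,\,\lambda\big).
\]
Here $C_1$ is given by~\eqref{eq-additional-C_1-expression}. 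The first task is to check that~\eqref{eq-intro-AFLT} applies to $A_{m-1}$ with parameters $n\mapsto m-1$, $a\mapsto (n-m+1)c+b+1$, $b\mapsto c-b$, $c\mapsto c+1$, and with $\lambda$ and $\mu$ interchanged. The length condition required is $\ell(\mu)\le m-1$, which holds because $\ell(\mu)<m$. All three parameters are nonnegative integers, since $c\ge b$.

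I will then write out the right-hand side of~\eqref{eq-intro-AFLT} under this substitution and simplify it factor by factor.
\begin{itemize}
\item The sign and power become $(-1)^{|\mu|}q^{\sum_i\binom{\mu_i+1}{2}-(c+1)n(\mu)}$. Combining this with the factor $q^{(b+1)|\xi|+|\lambda|-|\mu|}$ from $C_1$ gives $q^{\sum\binom{\mu_i}{2}}$, since $\binom{\mu_i+1}{2}-\mu_i=\binom{\mu_i}{2}$.
\item The first principal specialization becomes $P_\mu\big[\frac{1-q^{(m-1)(c+1)}}{1-q^{c+1}}\big]$, which is one of the factors in the target.
\item In the second specialization, the plethystic argument $\frac{q^{c-(c-b)}-q^{(n-m+1)c+b+1+(m-1)(c+1)}}{1-q^{c+1}}$ simplifies to $q^b\,\frac{1-q^{nc+m}}{1-q^{c+1}}$. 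By homogeneity~\eqref{e-homo-sym}, this yields $q^{b|\lambda|}P_\lambda\big[\frac{1-q^{nc+m}}{1-q^{c+1}}\big]$. This supplies the remaining $q^{b|\lambda|}$, so the total exponent of $q$ is $(b+1)(|\lambda|+|\xi|)$, as claimed.
\end{itemize}

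The product $\prod_{i=1}^{m-1}\prod_{j=1}^{\ell(\lambda)}(q^{(c-b)+(i-j-1)(c+1)-\mu_i+\lambda_{j+1}+1})_{\lambda_j-\lambda_{j+1}}$ has exponent $(i-j)(c+1)-b-\mu_i+\lambda_{j+1}$. Extending the range of $j$ up to $m-1$ only adds trivial factors, because $\lambda_j=\lambda_{j+1}=0$ there. This produces the double product in the target.

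The main obstacle is the last product,
\[
\prod_{i=0}^{m-2}\frac{(q)_{i(c+1)+(n-m+1)c+c+1}\,(q)_{(i+1)(c+1)}}{(q)_{i(c+1)+(n-m+1)c+b+1+\mu_{m-1-i}}\,(q)_{i(c+1)+c-b-\mu_{i+1}+\lambda_1}\,(q)_{c+1}}.
\]
I would reindex it by $i\mapsto m-1-i$ in the first numerator and first denominator, and by $i\mapsto i-1$ elsewhere. The ratio of the first numerator to the first denominator is then a finite $q$-shifted factorial. For the $\mu_i$ term this factorial is $(q^{(n-i)(c+1)+b-(n-m-1)+\mu_i})_{c-b-\mu_i}$; the exponent agrees because $(n-m+1)c+b+1+(m-1-i)(c+1)=(n-i)(c+1)+b-(n-m-1)$. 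The second numerator over $(q)_{c+1}$ gives $(q)_{i(c+1)}/(q)_{c+1}$. The denominator $(q)_{i(c+1)-b-1-\mu_i+\lambda_1}$ appears directly. Everything remaining collects to the stated product $\prod_{i=0}^{n-m}(q^{1+c})_{ic}(q^{-ic-b})_b/(q)_{ic+b}$ from $C_1$, together with $\frac{1-q^{m(c+1)}}{1-q^{c+1}}$ and $P_\xi$. The conditions $c\ge b+\mu_1$ and $b\ge|\lambda|+|\xi|+m$ guarantee that the $q$-shifted factorials in this bookkeeping have nonnegative lengths and that no vanishing factor appears.
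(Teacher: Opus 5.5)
Your proposal is correct and follows the paper's route exactly: combine Lemma~\ref{lem-additional-lem3} with $\CT_x P(d_0)=F_{n,m}(-d_0,b,c,\lambda,\xi,\mu)$, then evaluate $A_{m-1}((n-m+1)c+b+1,c-b,c+1,\mu,\lambda)$ by~\eqref{eq-intro-AFLT}; your factor-by-factor bookkeeping (sign and $q$-power, the two principal specializations, the double product, and the reindexed last product) all checks out. One small slip: the exponent identity you cite is off by one, since its left side is $(n-i)c+b+m-i$ and its right side is $(n-i)c+b+m-i+1$. The missing $1$ comes from $(q)_{N+k}/(q)_N=(q^{N+1})_k$, so the factor $(q^{(n-i)(c+1)+b-(n-m-1)+\mu_i})_{c-b-\mu_i}$ you obtain is still correct.
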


\subsection{Proof of Theorem~\ref{main-thm}}\label{subsec-proof3-2}
In this subsection, we complete the proof of Theorem~\ref{main-thm}. Throughout the proof, we assume $\lambda$ and $\xi$ satisfy at least one of the conditions in Theorem~\ref{main-thm}. 

\begin{proof}
In the previous sections, we have established the polynomiality, rationality, and vanishing properties of $F_{n,m}(a,b,c,\lambda,\xi,\mu)$. We have also given an expression for $F_{n,m}(a,b,c,\lambda,\xi,\mu)$ at $a=-(n-m)c-b-1$ when $c\geq b+\mu_1$, $b\geq |\lambda|+|\xi|+m$. Let $R_{n,m}(a,b,c,\lambda,\xi,\mu)$ be the right-hand side of~\eqref{eq-intro-maineq}. It is trivial to check that:
\begin{enumerate}
        \item for fixed nonnegative integers $b$ and $c$ such that $b\geq \lambda_1+m$, $R_{n,m}(a,b,c,\lambda,\xi,\mu)$ is a polynomial in $q^a$ of degree at most $nb+|\mu|-|\lambda|-|\xi|$;
        \item for fixed nonnegative integers $a$ and $c$, $R_{n,m}(a,b,c,\lambda,\xi,\mu)$ is a rational function in $q^b$;
        \item for fixed nonnegative integers $a$ and $b$, 
        \[
        (q)_c^n/(q)_{nc} \cdot R_{n,m}(a,b,c,\lambda,\xi,\mu)
        \]
        is a rational function in $q^c$;
        \item $R_{n,m}(a,b,c,\lambda,\xi,\mu)$ equals $F_{n,m}(a,b,c,\lambda,\xi,\mu)$ when $c\geq b+\mu_1$, $b\geq |\lambda|+|\xi|+m$, and
        \[
        -a\in B_1(\eta^+)\cup B_2(\delta^m)\cup B_3(\mu)\cup \{(n-m)c+b+1\}.
        \]
        Here $\eta=(\lambda+\delta^m,\xi)$, and the sets $B_i:i=1,2,3$ are defined in~\eqref{def-sets-B1-B3}.
\end{enumerate}
Using Lemma~\ref{lem-preliminary-a,b,c}, we can extend
\[
F_{n,m}(a,b,c,\lambda,\xi,\mu)=R_{n,m}(a,b,c,\lambda,\xi,\mu)
\]
to all nonnegative integers $a,b,c$.
\end{proof}

%% file: 9.appendix.tex
\section{Proof of Lemma~\ref{cor-nonmac-splitvariables}}
The proof follows an argument similar to that for symmetric Macdonald polynomials in~\cite[Section 6.7]{Mac95}. We first recall a Cauchy-type identity for nonsymmetric Macdonald polynomials, due to Mimachi and Noumi. For variables $x = (x_1, \ldots, x_n)$ and $y = (y_1, \ldots, y_n)$, denote
\begin{equation*}
\Omega(x;y|q,t)=\prod_{1\leq j < i \leq n}\frac{(qtx_iy_j)_{\infty}}{(qx_iy_j)_{\infty}}\prod_{1\leq i \leq n}\frac{(qtx_iy_i)_{\infty}}{(x_iy_i)_{\infty}}\prod_{1\leq i < j \leq n}\frac{(tx_iy_j)_{\infty}}{(x_iy_j)_{\infty}}.
\end{equation*}

\begin{thm}\cite[Theorem A]{cauchy-formula}
The function $\Omega(x;y|q,t)$ can be expanded in terms of nonsymmetric Macdonald polynomials:
\begin{equation}\label{eq-nonmac-cauchy-identity}
\Omega(x;y|q,t)=\sum_{\lambda\in\mathbb{N}^n}a_{\lambda}(q,t)E_{\lambda}(x;q,t)E_{\lambda}(y;q^{-1},t^{-1}),
\end{equation}
where for each composition $\lambda\in\mathbb{N}^n$, the coefficient $a_{\lambda}(q,t)$ is explicitly given in~\cite[(0.3)]{cauchy-formula}.
\end{thm}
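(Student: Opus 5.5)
The plan is to show that $\Omega(x;y\Mid q,t)$ is a joint eigenfunction of the Cherednik operators acting on $x$, twinned with the corresponding operators acting on $y$ with parameters $(q^{-1},t^{-1})$. Simultaneous diagonalization then forces the double expansion of $\Omega$ to be diagonal.

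\textbf{Step 1: expansion and triangularity.} Regard $\Omega$ as a formal power series in the products $x_iy_j$. Expanding each ratio of infinite products by the $q$-binomial theorem writes $\Omega$ as a sum of terms $x^\alpha y^\beta$ with $|\alpha|=|\beta|$. Since $\{E_\lambda(x;q,t)\}$ and $\{E_\mu(y;q^{-1},t^{-1})\}$ are bases in each degree, we may write
\[
\Omega=\sum_{\lambda,\mu}a_{\lambda\mu}E_\lambda(x;q,t)E_\mu(y;q^{-1},t^{-1}),
\]
where the sum runs over $|\lambda|=|\mu|$ and the coefficients $a_{\lambda\mu}\in\mathbb{F}$ are unknown. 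The goal is to prove $a_{\lambda\mu}=0$ for $\lambda\neq\mu$ and $a_{\lambda\lambda}\neq 0$.

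\textbf{Step 2: intertwining relations.} Recall that $E_\lambda$ is the joint eigenfunction of the Cherednik operators $Y_i=t^{\cdots}T_i\cdots T_{n-1}\,\omega\, T_1^{-1}\cdots T_{i-1}^{-1}$. Here $\omega f(x_1,\dots,x_n)=f(qx_n,x_1,\dots,x_{n-1})$, and the eigenvalues $\bar\lambda_i=q^{\lambda_i}t^{\cdots}$ separate compositions. I would verify two relations by direct computation on the explicit product.
\begin{enumerate}
\item $T_i^{(x)}\Omega=\big(T_i^{(y)}\big)^{*}\Omega$, where $\big(T_i^{(y)}\big)^{*}$ is the Demazure--Lusztig operator in $y$ with $(q,t)$ replaced by $(q^{-1},t^{-1})$, suitably normalized. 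Only the factors involving $x_i,x_{i+1},y_i,y_{i+1}$ are affected, so this is a two-variable check.
\item $\omega^{(x)}\Omega=\big(\omega^{(y)}\big)^{-1}\Omega$, up to the appropriate inversion. The asymmetric ordering $j<i$ versus $i<j$ in the three products of $\Omega$ is exactly what makes this cyclic shift relation hold.
\end{enumerate}
Combining these gives $Y_i^{(x)}\Omega=\widetilde Y_i^{(y)}\Omega$. Here $\widetilde Y_i^{(y)}$ is the Cherednik operator in $y$ for parameters $(q^{-1},t^{-1})$, and it acts on $E_\mu(y;q^{-1},t^{-1})$ with eigenvalue $\bar\mu_i(q^{-1},t^{-1})$. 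By construction of the intertwining, this eigenvalue plays the role of $\bar\mu_i(q,t)$.

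\textbf{Step 3: diagonality and nonvanishing.} Applying $Y_i^{(x)}$ and $\widetilde Y_i^{(y)}$ to the expansion of Step 1 gives $a_{\lambda\mu}(\bar\lambda_i-\bar\mu_i)=0$ for all $i$. The spectrum is simple, since eigenvalues determine compositions for generic $q,t$. Hence $a_{\lambda\mu}=0$ unless $\lambda=\mu$. The leading coefficient $a_\lambda$ is read off from the coefficient of $x^\lambda y^\lambda$, using the triangularity \eqref{eq-nonmac-uppertriangular}. Only nonvanishing is needed for Lemma~\ref{cor-nonmac-splitvariables}, though the explicit product of \cite[(0.3)]{cauchy-formula} could be recovered by a Pieri/recursion argument.

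\textbf{Main obstacle.} I expect Step 2 to be the main obstacle: getting the exact normalizations so that $Y_i^{(x)}$ on $\Omega$ matches an operator in $y$ whose eigenvalues coincide with those of $Y_i^{(x)}$. Both the $T_i$ relation (with its inverse and $t\mapsto t^{-1}$) and the $\omega$ relation (with the $q$-shift absorbed by the $(qtx_iy_j)_\infty/(qx_iy_j)_\infty$ factors) must be checked carefully. I would first carry out the $n=2$ case completely to fix all constants, then argue factorwise for general $n$.
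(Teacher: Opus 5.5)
The paper gives no proof of this theorem; it quotes it from Mimachi--Noumi. Measured against what a complete proof needs, your diagonality argument is sound once Step~2 is normalized correctly, but Step~3 has a genuine gap.

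\textbf{The gap in Step~3.} The coefficient of $x^\lambda y^\lambda$ in $\Omega$ is \emph{not} $a_\lambda$. By triangularity it equals $a_\lambda+\sum_{\nu\succ\lambda}a_\nu\,[x^\lambda]E_\nu(x;q,t)\,[y^\lambda]E_\nu(y;q^{-1},t^{-1})$. Already for $n=2$ in degree one this fails. With the paper's conventions, $E_{(0,1)}=x_2$ and $E_{(1,0)}(x;q,t)=x_1+\frac{q(1-t)}{1-qt}x_2$. Matching the degree-one part of $\Omega$ gives $a_{(1,0)}=\frac{1-qt}{1-q}$ and $a_{(0,1)}=\frac{1-qt^2}{1-qt}$, whereas $[x_2y_2]\Omega=\frac{1-qt}{1-q}$.

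So you establish neither the explicit value \cite[(0.3)]{cauchy-formula}, which is part of the statement, nor even $a_\lambda\neq0$. Nonvanishing is exactly what the paper uses when it divides by $a_\lambda$ in Lemma~\ref{lem-nonmac-splitvariables}.

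Nonvanishing can be repaired as follows. In bidegree $(d,d)$, the matrix $M^{(d)}=\bigl([x^\alpha y^\beta]\Omega\bigr)_{\alpha,\beta}$ factors as $C^{\top}\operatorname{diag}(a_\lambda)\,C'$, where $C,C'$ are the unitriangular coefficient matrices of the $E_\lambda$. Hence $\det M^{(d)}=\prod_{|\lambda|=d}a_\lambda$. The entries of $M^{(d)}$ are polynomials in $t$ over $\mathbb{Q}(q)$. At $t=1$ one has $\Omega=\prod_i(1-x_iy_i)^{-1}$, so $M^{(d)}\big|_{t=1}$ is the identity, and therefore every $a_\lambda\neq0$. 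The product formula (0.3) itself still needs a separate computation (e.g.\ a recursion in $\lambda$), which your proposal only gestures at.

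\textbf{The normalization in Step~2.} Both relations must carry an inverse:
\[
T_i^{(x)}\Omega=\Bigl(t+\frac{y_i-ty_{i+1}}{y_i-y_{i+1}}\bigl(s_i^{(y)}-1\bigr)\Bigr)\Omega=\bigl(\widetilde T_i^{(y)}\bigr)^{-1}\Omega
\]
and
\[
\Omega(qx_n,x_1,\dots,x_{n-1};y)=\Omega(x;y_2,\dots,y_n,qy_1)=\bigl(\widetilde\omega^{(y)}\bigr)^{-1}\Omega,
\]
where the tilde denotes the operator built with $(q^{-1},t^{-1})$.

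The reduction of the first identity to the block in $x_i,x_{i+1},y_i,y_{i+1}$ is valid, because the remaining factors are symmetric under both $x_i\leftrightarrow x_{i+1}$ and $y_i\leftrightarrow y_{i+1}$. On that block the identity is a rational check. One has $s^{(x)}\Omega/\Omega=\frac{(1-x_1y_1)(1-tx_2y_2)}{(1-x_2y_1)(1-tx_1y_2)}$ and $s^{(y)}\Omega/\Omega=\frac{(1-tx_1y_1)(1-x_2y_2)}{(1-x_2y_1)(1-tx_1y_2)}$. Both sides then become $\bigl(t+\frac{(tx_1-x_2)(ty_2-y_1)}{(1-x_2y_1)(1-tx_1y_2)}\bigr)\Omega$.

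Moving a product of operators across $\Omega$ reverses its order, so these relations give $Y_i^{(x)}\Omega=(\widetilde Y_i^{(y)})^{-1}\Omega$. The operator $(\widetilde Y_i^{(y)})^{-1}$ acts on $E_\mu(y;q^{-1},t^{-1})$ by $\bar\mu_i(q^{-1},t^{-1})^{-1}=\bar\mu_i(q,t)$, which is what Step~3 needs.

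By contrast, the uninverted relation $Y_i^{(x)}\Omega=\widetilde Y_i^{(y)}\Omega$ written in your Step~2 cannot hold. It would force $a_{\lambda\mu}=0$ unless $\bar\lambda_i\bar\mu_i=1$ for all $i$, which kills every term of positive degree.
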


Let $I:=\{i_1,\ldots,i_r\}$ and $J:=\{j_1,\ldots,j_k\}$ be a set partition of $\{1,\ldots,n\}$, i.e., $I\cup J=\{1,\ldots,n\}$ and $I$, $J$ are disjoint. For a formal power series $f(y)=f(y_1,\ldots,y_n)$, denote
\begin{equation}
    f(y_I)= f(y_1,\ldots,y_n)\Mid_{y_{j_1}=\cdots=y_{j_k}=0}, \quad f(y_J)= f(y_1,\ldots,y_n)\Mid_{y_{i_1}=\cdots=y_{i_r}=0}.
\end{equation}
It is trivial to check that
\begin{equation}\label{eq-nonmac-product-kernel}
    \Omega(x;y_I | q,t)\Omega(x;y_J | q,t)=\Omega(x;y | q,t).
\end{equation}
For a fixed positive integer $n$ define
\begin{equation}
    f_{\mu\nu}^\lambda:=\cfrac{\langle E_\mu(x;q,t)E_\nu(x;q,t), E_\lambda(x;q,t)\rangle_{n}}{\langle E_\lambda(x;q,t),E_\lambda(x;q,t)\rangle_{n}}.
\end{equation}
By the orthogonality of nonsymmetric Macdonald polynomials, it is equivalent to write
\begin{equation}\label{eq-nonmac-f_munu-version2}
    E_\mu(x;q,t)E_\nu(x;q,t)=\sum_\lambda f_{\mu\nu}^\lambda E_\lambda(x;q,t).
\end{equation}

\begin{lem}\label{lem-nonmac-splitvariables}
    For $\lambda\in\mathbb{N}^n$, the nonsymmetric Macdonald polynomial $E_\lambda(y;q^{-1},t^{-1})$ has the following expansion:
    \begin{equation*}
        E_\lambda(y;q^{-1},t^{-1})=\sum_{\mu,\nu\in\mathbb{N}^n} \cfrac{a_\nu(q,t) a_\mu(q,t)}{a_\lambda(q,t)} f_{\mu\nu}^\lambda E_\mu(y_I;q^{-1},t^{-1}) E_\nu(y_J;q^{-1},t^{-1}).
    \end{equation*}
\end{lem}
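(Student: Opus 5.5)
The statement to prove is Lemma~\ref{lem-nonmac-splitvariables}. The plan is to compare two expansions of the kernel $\Omega(x;y|q,t)$, using the Cauchy identity~\eqref{eq-nonmac-cauchy-identity} and the factorization~\eqref{eq-nonmac-product-kernel}, and then to extract coefficients with respect to the basis $\{E_\lambda(x;q,t)\}$.

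First, apply~\eqref{eq-nonmac-cauchy-identity} to each factor on the left of~\eqref{eq-nonmac-product-kernel}. Setting $y_j=0$ for $j\in J$ in~\eqref{eq-nonmac-cauchy-identity} is legitimate as a formal power series specialization in $y$. It gives
\[
\Omega(x;y_I|q,t)=\sum_{\mu\in\mathbb{N}^n}a_\mu(q,t)E_\mu(x;q,t)E_\mu(y_I;q^{-1},t^{-1}),
\]
and similarly for $J$. Multiplying the two expansions gives
\[
\Omega(x;y|q,t)=\sum_{\mu,\nu}a_\mu a_\nu E_\mu(x;q,t)E_\nu(x;q,t)E_\mu(y_I;q^{-1},t^{-1})E_\nu(y_J;q^{-1},t^{-1}).
\]

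Next, expand $E_\mu(x)E_\nu(x)$ via~\eqref{eq-nonmac-f_munu-version2} as $\sum_\lambda f^\lambda_{\mu\nu}E_\lambda(x;q,t)$. Then compare with the right side of~\eqref{eq-nonmac-cauchy-identity}, namely $\sum_\lambda a_\lambda E_\lambda(x;q,t)E_\lambda(y;q^{-1},t^{-1})$.

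The main point needing care is justifying the coefficient comparison. I would grade both sides by total degree in $y$, which equals total degree in $x$ because every term is bihomogeneous. Each $E_\lambda$ is homogeneous of degree $|\lambda|$, so the product $E_\mu E_\nu$ only involves $E_\lambda$ with $|\lambda|=|\mu|+|\nu|$. Hence for each fixed degree $N$ the relevant sums over $\mu,\nu,\lambda$ are finite.

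Within degree $N$, both sides are finite sums $\sum_{|\lambda|=N}E_\lambda(x;q,t)\,G_\lambda(y)$. The $E_\lambda(x;q,t)$ are linearly independent over $\mathbb{F}[y]$, since they form a basis of $\mathbb{F}[x]$. Equating coefficients of $E_\lambda(x;q,t)$ therefore gives
\[
a_\lambda E_\lambda(y;q^{-1},t^{-1})=\sum_{\mu,\nu}a_\mu a_\nu f^\lambda_{\mu\nu}E_\mu(y_I;q^{-1},t^{-1})E_\nu(y_J;q^{-1},t^{-1}).
\]

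Finally, divide by $a_\lambda(q,t)$. This step requires $a_\lambda\neq 0$, which I would read off from the explicit product formula in~\cite[(0.3)]{cauchy-formula}. The coefficients there are products of factors of the form $(1-q^{i}t^{j})$ and their ratios, none of which vanish in $\mathbb{Q}(q,t)$. This completes the proof of the lemma.
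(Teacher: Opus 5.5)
Your proposal is correct and takes the same route as the paper. Both expand $\Omega(x;y_I|q,t)\,\Omega(x;y_J|q,t)$ factor by factor with the Cauchy identity, rewrite $E_\mu(x;q,t)E_\nu(x;q,t)$ through $f_{\mu\nu}^\lambda$, and compare coefficients of $E_\lambda(x;q,t)$ with the direct expansion of $\Omega(x;y|q,t)$. Your degree-by-degree finiteness and linear-independence argument, and the check that $a_\lambda(q,t)\neq 0$ before dividing, make explicit two points the paper's proof leaves tacit.
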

\begin{proof}
    By~\eqref{eq-nonmac-cauchy-identity}, ~\eqref{eq-nonmac-product-kernel}, and~\eqref{eq-nonmac-f_munu-version2}, we have
    \begin{align*}
        & \sum_{\lambda\in\mathbb{N}^n} a_\lambda(q,t)E_\lambda(x;q,t) E_\lambda(y;q^{-1},t^{-1})\\
      & = \Omega(x;y| q,t)\\
      & = \Omega(x;y_I | q,t) \Omega(x;y_J | q,t)\\
      & = \Big(\sum_{\mu\in\mathbb{N}^n} a_\mu(q,t)E_\mu(x;q,t) E_\mu(y_I;q^{-1},t^{-1}) \Big)\Big(\sum_{\nu\in\mathbb{N}^n} a_\nu(q,t)E_\nu(x;q,t) E_\nu(y_J;q^{-1},t^{-1}) \Big)\\
      & = \sum_{\mu,\nu\in\mathbb{N}^n} a_\mu(q,t)a_\nu(q,t) E_\mu(y_I;q^{-1},t^{-1})  E_\nu(y_J;q^{-1},t^{-1}) E_\mu(x;q,t)E_\nu(x;q,t)\\
      & = \sum_{\mu,\nu,\lambda\in\mathbb{N}^n} a_\mu(q,t)a_\nu(q,t) f_{\mu\nu}^\lambda E_\mu(y_I;q^{-1},t^{-1}) E_\nu(y_J;q^{-1},t^{-1}) E_\lambda(x;q,t).
     \end{align*}
    The result therefore follows by comparing the coefficients of $E_\lambda(x;q,t)$ on both sides.
\end{proof}

To obtain a vanishing condition for $f_{\mu\nu}^\lambda$, we define another partial order $\ll$ on compositions by $\mu \ll \lambda$ if there exists a permutation $\pi$ such that for all $1\leq i\leq n$,
\begin{align*}
    &\mu_i<\lambda_{\pi(i)} \quad \text{for $i<\pi(i)$},
 \\   &\mu_i\leq \lambda_{\pi(i)} \quad \text{for $i\geq \pi(i)$}.
\end{align*}
It is immediate that $\mu \ll \lambda$ implies $\mu^+ \subseteq \lambda^+$. For a positive integer $p$ and $1\leq i_1<\cdots<i_p\leq n$, the nonsymmetric Macdonald polynomials admit the following Pieri-type formula\cite[(A.7)]{skew-super}.
\begin{equation}\label{eq-nonmac-pieri-rule}
    x_{i_1}\cdots x_{i_p}E_\mu(x;q,t)=\sum_{\lambda \in \mathbb{J}_{n,I,\mu}}c_{I \mu}^{\lambda}(q^{-1},t^{-1})E_\lambda(x;q,t),
\end{equation}
where $I=\{i_1,\ldots,i_p\}$, $c_{I \mu}^{\lambda}(q^{-1},t^{-1})\in \mathbb{F}$ and 
\[
\mathbb{J}_{n,I,\mu}=\{\lambda:\mu \ll \lambda \ll \mu+\mathbf{1}^n, |\lambda|=|\mu|+|I|\}.
\]
For $\alpha\in\mathbb{N}^n$ with $r=\max\{\alpha_i:1\leq i\leq n\}$, by repeatedly applying~\eqref{eq-nonmac-pieri-rule}, we have that
\begin{equation}\label{eq-nonmac-pieri-rule-2}
    x_1^{\alpha_1}\cdots x_n^{\alpha_n}E_\mu(x;q,t)=\sum_{\lambda \in \mathbb{J}_{n,\alpha,\mu}}c_{\alpha\mu}^{\lambda}(q^{-1},t^{-1})E_\lambda(x;q,t),
\end{equation}
where $c_{\alpha\mu}^{\lambda}(q^{-1},t^{-1})\in\mathbb{F}$ and
\[
\mathbb{J}_{n,\alpha,\mu}=\{\lambda:\mu\ll \lambda \ll \mu+r\cdot\mathbf{1}^n, |\lambda|=|\mu|+|\alpha|\}.
\]
Using the Pieri-type formula, we have the following vanishing condition for $f_{\mu\nu}^\lambda$.

\begin{lem}\label{lem-nonmac-f_munulambda=0}
    $f_{\mu\nu}^\lambda=0$ if $\mu\not\ll\lambda$ or $\nu\not\ll\lambda$.
\end{lem}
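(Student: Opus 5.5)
Since $f_{\mu\nu}^\lambda$ is, up to the nonzero normalisation $\langle E_\lambda,E_\lambda\rangle_n$, the pairing $\langle E_\mu E_\nu,E_\lambda\rangle_n$, I would prove the claim by showing that the product $E_\mu(x;q,t)E_\nu(x;q,t)$ lies in the span of the $E_\rho$ with $\mu\ll\rho$. By symmetry of the roles of $\mu$ and $\nu$ (the product is commutative), it is enough to treat $\mu\not\ll\lambda$; the case $\nu\not\ll\lambda$ follows by interchanging $\mu$ and $\nu$.

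\textbf{Step 1: expand $E_\nu$ in monomials.} By \eqref{eq-nonmac-uppertriangular} we have $E_\nu(x;q,t)=\sum_{\alpha}c_{\nu\alpha}x^\alpha$, a finite sum over compositions $\alpha\in\mathbb{N}^n$. Multiplying by $E_\mu$ and applying the iterated Pieri formula \eqref{eq-nonmac-pieri-rule-2} to each monomial gives
\[
E_\mu E_\nu=\sum_{\alpha}c_{\nu\alpha}\sum_{\rho\in\mathbb{J}_{n,\alpha,\mu}}c_{\alpha\mu}^{\rho}(q^{-1},t^{-1})E_\rho .
\]
Every $\rho$ that occurs satisfies $\mu\ll\rho$.

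\textbf{Step 2: read off the coefficient.} The $E_\rho$ form a basis, and by orthogonality \eqref{eq-nonmac-f_munu-version2} the coefficient of $E_\lambda$ in this expansion is exactly $f_{\mu\nu}^\lambda$. Hence $f_{\mu\nu}^\lambda\ne0$ forces $\lambda$ to be one of the $\rho$ above, so $\mu\ll\lambda$. Equivalently, $\mu\not\ll\lambda$ gives $f_{\mu\nu}^\lambda=0$.

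\textbf{Main obstacle.} The only real content lies in \eqref{eq-nonmac-pieri-rule-2} itself. It is obtained by iterating \eqref{eq-nonmac-pieri-rule}, and it is only recorded in the paper, not proved. I would justify it by writing $x^\alpha$ as a product of $r$ square-free monomials $x_{I_1}\cdots x_{I_r}$, where $I_k=\{i:\alpha_i\ge k\}$, and applying \eqref{eq-nonmac-pieri-rule} $r$ times. This needs the relation $\ll$ to be transitive. Transitivity holds: if $\mu\ll\rho$ via $\pi$ and $\rho\ll\lambda$ via $\sigma$, then $\sigma\circ\pi$ witnesses $\mu\ll\lambda$. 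To see this, note that $\mu_i\le\rho_{\pi(i)}\le\lambda_{\sigma\pi(i)}$, and the chain is strict whenever $i<\sigma\pi(i)$, because then $i<\pi(i)$ or $\pi(i)<\sigma(\pi(i))$.

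For the argument above only the lower relation $\mu\ll\rho$ is needed, so the upper bound $\rho\ll\mu+r\mathbf{1}^n$ in $\mathbb{J}_{n,\alpha,\mu}$ plays no role here.
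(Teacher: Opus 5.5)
Your proof is correct and follows the paper's argument. You expand $E_\nu$ into monomials via \eqref{eq-nonmac-uppertriangular}, apply \eqref{eq-nonmac-pieri-rule-2} to each $x^\alpha E_\mu$ so that only $E_\rho$ with $\mu\ll\rho$ occur, read off $f_{\mu\nu}^\lambda=0$ by orthogonality, and treat $\nu\not\ll\lambda$ by symmetry; your check that $\ll$ is transitive (witnessed by $\sigma\circ\pi$) correctly justifies the iteration of \eqref{eq-nonmac-pieri-rule}, a step the paper leaves implicit.
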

\begin{proof}
Assume $\mu\not\ll\lambda$. Consider the expansion of $E_\mu(x;q,t)E_\nu(x;q,t)$ in the basis of nonsymmetric Macdonald polynomials. By~\eqref{eq-nonmac-uppertriangular}, we have
\[
E_\mu(x;q,t)E_\nu(x;q,t)=\sum_{\eta\preceq\nu} b_{\nu\eta}x^\eta E_\mu(x;q,t).
\]
For each $\eta\preceq\nu$, formula~\eqref{eq-nonmac-pieri-rule-2} implies that the expansion of $x^\eta E_\mu(x;q,t)$ in the nonsymmetric Macdonald basis only involves terms $E_\gamma(x;q,t)$ with $\mu\ll\gamma$. Since $\mu\not\ll\lambda$, the polynomial $E_\lambda(x;q,t)$ does not occur in this expansion. Therefore, by the orthogonality of nonsymmetric Macdonald polynomials,
\[
\langle E_\mu E_\nu, E_\lambda \rangle_n=\sum_{\eta\preceq \nu} b_{\nu\eta} \langle x^\eta E_\mu,E_\lambda\rangle_n=0.
\]
Hence, we have $f_{\mu\nu}^\lambda=0$. The case $\nu\not\ll\lambda$ follows in the same way by the symmetry $f_{\mu\nu}^\lambda=f_{\nu\mu}^\lambda$.
\end{proof}

Let $x^{(i)}:=(x_1,\ldots,x_{i-1},x_{i+1},\ldots,x_n)$. 
\begin{lem}\label{lem-nonmac-x_i=0}
    Let $\nu=(\nu_1,\ldots,\nu_n)$ be a composition. Then for $i=1,\ldots,n$, 
    \begin{equation}\label{eq-nonmac-x_i=0}
        E_\nu(x;q,t)\Mid_{x_i=0}=
        \begin{cases}
            \text{a linear combination of $E_\eta(x^{(i)};q,t):\eta\in\mathbb{N}^{n-1},\ \eta^+=\nu^+$} & \text{if $\ell(\nu^+)<n$},\\
            0 &\text{if $\ell(\nu^+)=n$}.
        \end{cases}
    \end{equation}
\end{lem}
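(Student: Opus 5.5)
The plan is to combine the stability property \eqref{eq-nonmac-stability}, which handles the variable $x_n$, with the $T_i$-action \eqref{eq-nonmac-T_iaction}, which lets us conjugate any $x_i$ into the last position.

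First I treat $i=n$. By \eqref{eq-nonmac-stability}, $E_\nu|_{x_n=0}$ equals $E_{\nu^{(n)}}(x^{(n)})$ if $\nu_n=0$, and $0$ otherwise. If $\nu_n=0$, then $(\nu^{(n)})^+$ is $\nu^+$ with one zero removed. In the convention of the lemma, where trailing zeros of partitions are dropped, this gives $(\nu^{(n)})^+=\nu^+$. If $\ell(\nu^+)=n$, then every $\nu_j>0$, so $\nu_n\neq 0$ and the restriction vanishes.

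For general $i$, write $\sigma=s_{n-1}s_{n-2}\cdots s_i$ and consider $g:=T_{n-1}\cdots T_i E_\nu$. By \eqref{eq-nonmac-T_iaction}, $g$ is a linear combination of $E_\gamma$ with $\gamma^+=\nu^+$. The key step is to invert this relation. The quadratic relation \eqref{eq-nonmac-hecke-relation-1} makes each $T_j$ invertible, with $T_j^{-1}=t^{-1}(T_j+1-t)$, and $T_j^{-1}$ also preserves the span $V_{\nu^+}:=\mathrm{span}\{E_\gamma:\gamma^+=\nu^+\}$. Hence $E_\nu=T_i^{-1}\cdots T_{n-1}^{-1}g$ can be written as a sum of terms of the form $T_i^{\epsilon_i}\cdots T_{n-1}^{\epsilon_{n-1}}E_\gamma$.

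The difficulty is that we set $x_i=0$, not $x_n=0$. The cleaner route is to use the relation between the operators and the transposition $s_j$ directly. From \eqref{definition-nonmac-T_i}, for any $f$,
\[
s_jf=f+\frac{x_j-x_{j+1}}{tx_j-x_{j+1}}(T_j-t)f.
\]
Iterating, $f|_{x_i=0}$ equals $(\sigma f)|_{x_n=0}$ after the appropriate relabeling of variables.

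Take $f=E_\nu$. Applying $s_{n-1}\cdots s_i$ step by step, each intermediate expression is a combination of elements of $V_{\nu^+}$ with rational coefficients in the swapped variables. Specializing the moving variable to $0$ at the end kills those coefficients or sends them to constants, since $\frac{x_j-x_{j+1}}{tx_j-x_{j+1}}$ tends to $t^{-1}$ as $x_j\to 0$.

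This is the main obstacle: I must justify that the specialization commutes with the process and only produces constants times restrictions $E_\gamma|_{x_n=0}$ with $\gamma^+=\nu^+$. I would do this by induction on $n-i$ using a single step. Let $j=i$, and suppose $f\in V_{\nu^+}$ is to be restricted at $x_j=0$. Put $h=(T_j-t)f\in V_{\nu^+}$. Then
\[
f|_{x_j=0}=(s_jf)|_{x_{j+1}=0}\ \text{relabeled}=\bigl(f+t^{-1}h\bigr)\big|_{x_{j+1}=0}\ \text{relabeled}.
\]
Here I use that $\frac{x_j-x_{j+1}}{tx_j-x_{j+1}}$ at $x_{j+1}=0$ equals $t^{-1}$, a constant, since after applying $s_j$ the variable being zeroed is $x_{j+1}$. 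Thus the restriction at $x_j=0$ becomes a restriction at $x_{j+1}=0$ of an element of $V_{\nu^+}$, with the remaining variables identified with $x^{(j)}$ in order.

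Iterating up to $x_n$ and applying the case $i=n$ to each $E_\gamma$ yields a linear combination of $E_\eta(x^{(i)})$ with $\eta^+=\nu^+$, which vanishes when $\ell(\nu^+)=n$.
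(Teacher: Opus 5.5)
Your argument is correct and is essentially the paper's proof. The paper uses descending induction on $i$, with base case $i=n$ from the stability property, and relies on the same single-step identity $(T_{k-1}f)|_{x_k=0}=t\,(s_{k-1}f)|_{x_k=0}$ together with \eqref{eq-nonmac-T_iaction} to stay within the span of the $E_\gamma$ with $\gamma^+=\nu^+$. Your preliminary detours (inverting the $T_j$, specializing only at the end) are unnecessary once you specialize one step at a time.
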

\begin{proof}
    The second case follows directly from (\ref{eq-nonmac-uppertriangular}). We now prove the first case by descending induction on $i$. The base case is $i=n$, which follows by (\ref{eq-nonmac-stability}). Now we assume ~\eqref{eq-nonmac-x_i=0} holds for $i=k$. When $i=k-1$, by (\ref{eq-nonmac-T_iaction}) we have
    \begin{equation}\label{eq-nonmac-x_i=0-1}
        T_{k-1}E_\nu(x_1,\ldots,x_n)=c_\nu E_\nu(x_1,\ldots,x_n)+ c_\nu^\prime E_{s_{k-1}\nu}(x_1,\ldots,x_n),
    \end{equation}
    where $c_\nu$ and $c_\nu^\prime$ are constants. By definition of $T_{k-1}$, we also have 
    \begin{equation}\label{eq-nonmac-x_i=0-2}
        T_{k-1}E_\nu(x_1,\ldots,x_n)=t E_\nu(x_1,\ldots,x_n)
        +\cfrac{tx_{k-1}-x_k}{x_{k-1}-x_k}\Big(s_{k-1} E_\nu(x_1,\ldots,x_n)-E_\nu(x_1,\ldots,x_n)\Big).
    \end{equation}
    By taking $x_k=0$ in both (\ref{eq-nonmac-x_i=0-1}) and (\ref{eq-nonmac-x_i=0-2}), and by the inductive assumption, we have
    \begin{multline}\label{eq-nonmac-x_i=0-3}
       t E_\nu(x_1,\ldots,x_{k-2},0,x_{k-1},x_{k+1},\ldots,x_n)=
       c_\nu E_\nu(x_1,\ldots,x_n)\Mid_{x_k=0}+c_\nu^\prime E_{s_{k-1}\nu}(x_1,\ldots,x_n)\Mid_{x_k=0}\\
        = \text{a linear combination of $E_\eta(x^{(k)};q,t):\eta^+=\nu^+$}.
    \end{multline}
    By changing variables $x_{k-1} \leftrightarrow x_k$ in (\ref{eq-nonmac-x_i=0-3}), we obtain
    \[
    E_\nu(x_1,\ldots,x_{k-2},0,x_k,x_{k+1},\ldots,x_n)= \text{a linear combination of $E_\eta(x^{(k-1)};q,t):\eta^+=\nu^+$}.
    \]
    Hence~\eqref{eq-nonmac-x_i=0} holds when $i=k$ is replaced by $i=k-1$.
\end{proof}

Combining Lemma \ref{lem-nonmac-splitvariables}, Lemma \ref{lem-nonmac-f_munulambda=0} and Lemma \ref{lem-nonmac-x_i=0}, we obtain Lemma~\ref{cor-nonmac-splitvariables}.